\definecolor{ultramarine}{RGB}{0,32,96}
\definecolor{hookersgreen}{rgb}{0.0, 0.44, 0.0}
\newcommand{\appropto}{\mathrel{\vcenter{
  \offinterlineskip\halign{\hfil$##$\cr
    \propto\cr\noalign{\kern2pt}\sim\cr\noalign{\kern-2pt}}}}}
  \newcommand\numleq[1]%
\newcommand{\defined}{\mathrel{\mathop:}=}
\newcommand{\kom}[1]{}
\newcommand\numgeq[1]%
\newtheorem{thm}{Theorem}
\newtheorem{prop}{Proposition}
\newtheorem{rmk}{Remark}
\newtheorem{lem}{Lemma}
\newtheorem{defn}{Definition}
\newtheorem{obs}{Observation}
\begin{document}

\begin{frontmatter}

\title{Adaptive Weights Community Detection\thanksref{t1}}
\runtitle{Adaptive Weights Community Detection}

 \author{\fnms{Franz} \snm{Besold}\corref{}}
 \thankstext{t1}{Financial support by German Ministry for Education via the Berlin Center for
Machine Learning (01IS18037I) is gratefully acknowledged.} 
\address{Weierstrass Institute, Mohrenstr. 39, 10117 Berlin, Germany\\ \href{mailto:franz.besold@wias-berlin.de}{\textcolor{ultramarine}{franz.besold@wias-berlin.de}}}
\and
\author{\fnms{Vladimir} \snm{Spokoiny}\thanksref{t2}}
 \thankstext{t2}{The research was supported by the Russian Science Foundation grant No. 18-11-00132.} 
\address{Weierstrass Institute and HU Berlin,
HSE University, IITP RAS,\\
Mohrenstr. 39, 10117 Berlin, Germany \\ \href{mailto:vladimir.spokoiny@wias-berlin.de}{\textcolor{ultramarine}{vladimir.spokoiny@wias-berlin.de}}}

\runauthor{F. Besold and V. Spokoiny}

\begin{abstract}
Due to the technological progress of the last decades, Community Detection has become a major topic in machine learning. However, there is still a huge gap between practical and theoretical results, as theoretically optimal procedures often lack a feasible implementation and vice versa. This paper aims to close this gap and presents a novel algorithm that is both numerically and statistically efficient. Our procedure uses a test of homogeneity to compute adaptive weights describing local communities. The approach was inspired by the Adaptive Weights Community Detection (AWCD) algorithm by \cite{AWCD}. This algorithm delivered some promising results on artificial and real-life data, but our theoretical analysis reveals its performance to be suboptimal on a stochastic block model. In particular, the involved estimators are biased and the procedure does not work for sparse graphs. We propose significant modifications, addressing both shortcomings and achieving a nearly optimal rate of strong consistency on the stochastic block model. Our theoretical results are illustrated and validated by numerical experiments.
\end{abstract}

\begin{keyword}[class=MSC]
\kwd[Primary ]{62H30}
\kwd{}
\kwd[Secondary ]{62G10}
\end{keyword}
\begin{keyword}
\kwd{adaptive weights}
\kwd{community detection}
\kwd{stochastic block model}
\kwd{likelihood-ratio test}
\end{keyword}

\end{frontmatter}

 \newpage
\tableofcontents

\section{Introduction}
\subsection{Community Detection}
Community detection has become a major topic in modern statistics with applications in various fields. A very illustrative example is social graphs. Originally discussing relatively small examples such as the famous Zachary's network of karate club members \cite{karate_club} consisting of only 34 vertices, it is nowadays possible to process huge data with millions of vertices such as the Facebook graph \cite{facebook_example1, facebook_example2}, the amazon purchasing network \cite{amazon_example}, mobile phone networks \cite{mobile_phone_example} or most recently, data of COVID-19 infections \cite{CD_example_COVID}. Moreover, community detection has various applications in biology and bioinformatics \cite{survey_biology_cd}. Other popular examples are citation networks \cite{citation_example} and the world wide web \cite{www_example}. For a more exhaustive list of applications we refer to \cite{newman_survey, fortunato_survey, cd_another_survey_with_some_examples}.

The topics of community detection and clustering are clearly related. In fact, after embedding the nodes in a metric space, the problem of community detection can be reduced to the problem of clustering. Nevertheless, the underlying data of a graph is fundamentally different from a point cloud in \(\mathbb R^n\) and there is no canonical method to embed the vertices in a metric space. Consequently, the theoretical analysis and underlying models differ for each field. This has motivated the development of many methods that deal specifically with community detection. 

Similar to the topic of clustering, the task of community detection lacks a clear definition leading to a vast amount of algorithms with varying objectives. Most generally, the goal of community detection can be described as recovering groups of vertices with a similar connection pattern from a given graph. The graph may be weighted or directed. The most common case is that vertices inside each community are more densely connected to each other than to vertices of other communities. The set of groups may be a partition, a set of overlapping communities or a hierarchical structure.
One of the earliest and best-known methods is the Kernighan-Lin algorithm \cite{kernighan-lin-algorithm} which aims to find a bisection of a graph with a minimal number of edges connecting the two components through successively swapping vertices between the two communities. Applying it iteratively, this method produces a partition of the vertices. The number and size of the communities need to be known. A similar but faster method is the spectral graph bisection \cite{fiedler_spectral_graph_partitioning, barnes_spectral_bisection, pothen_some_spectral_graph_partitioning_algorithms}. However, the \emph{cut size}, i.e. the number of edges connecting the different components, is not a suitable quality measure of a community structure where the number and size of the communities are unknown. Instead, the so-called \emph{modularity} \cite{girvan_newman_modularity} has become the most popular quality function. It measures the discrepancy between the given number of edges between different communities and the expectation of this term for a similar random graph without the community structure. Originally, it was introduced as a stopping criterion for the algorithm of \cite{girvan_newman_algorithm}. However, there have been introduced many algorithms since that directly maximize modularity, starting with the greedy optimization \cite{newman_greedy_modularity_optimization, amazon_example, mobile_phone_example} and including for example spectral optimization \cite{newman_spectral_modularity_optimization} and simulated annealing \cite{simulated_annealing}. Modularity can be extended to the case of weighted graphs \cite{modularity_weighted_graph}, directed graphs \cite{modularity_directed_graph} and overlapping communities \cite{modularity_overlapping_communities}. There are plenty of other methods available besides modularity maximization, such as the \emph{Clique Percolation
Method} \cite{clique_percolation} constructing each community as a union of heavily overlapping cliques and thus allowing for overlapping communities, or most recently, also neural networks \cite{survey_sbm_deeplearning}. Hierarchical methods can be separated into \emph{agglomerative} and \emph{divisive} methods. Divisive algorithms iteratively split the communities into smaller communities. An example is the algorithm by \cite{girvan_newman_algorithm} starting with the original graph and successively removing edges based on a certain measure of \emph{betweenness}, e.g. of the number of shortest paths containing a given edge. Conversely, agglomerative algorithms iterative merge communities into larger communities. An example is the greedy optimization of modularity proposed by \cite{newman_greedy_modularity_optimization} starting by removing all edges from the graph and successively adding edges based on the impact on the modularity. Considering the communities as connected components, a hierarchical structure can be obtained. For a comprehensive survey on community detection algorithms, we refer to \cite{fortunato_survey}. 

A big challenge in Community Detection is the gap between theoretical results and practically relevant algorithms. Many of those lack a rigorous statistical analysis even on the most basic models, while statistically efficient algorithms are often not numerically feasible. For example, the rate-optimal procedure suggested by \cite{sbm_general_minimax} offers no implementation of polynomial complexity, whereas surprisingly little is known about the performance of the popular Louvain algorithm \cite{mobile_phone_example} on the stochastic block model despite some recent progress \cite{louvain_on_sbm}.

\subsection{Stochastic Block Model}
The \emph{stochastic block model (SBM)} \cite{original_paper_sbm} is the simplest and by far the most studied model for community detection. Under the SBM, edges are generated by independent Bernoulli variables, with the parameters only depending on the corresponding communities for each pair of vertices. The minimax rates are well known for different forms of recovery \cite{survey_sbm}. In this paper we will only discuss \emph{exact recovery}, also known as \emph{strong consistency}, i.e. we want to recover the entire partition with large probability and without any misclassified vertices.

The SBM is of course not a very realistic model, as the degree distribution inside communities is usually not uniform in applications. It has for example been shown, that the SBM provides a poor fit for the famous karate club network \cite{sbm_on_karate_club}. However, there has been recently a lot of progress on the theoretical foundations of community detection that go beyond the SBM, such as the study of \emph{degree-corrected block models} \cite{degree-corrected-sbm} or \emph{graphons} \cite{graphons1, graphons2}. And also for practical simulation, there have been various models introduced that are much more realistic than the standard SBM such as the LFR benchmark \cite{benchmark_sbm1} or \cite{benchmark_sbm2} where the degree distribution follows a power law. 

Nonetheless, the study of the SBM for a new method can be very useful, as this model already captures a major information theoretic bottleneck for random graphs. We will start discussing a very simple version of the SBM, which is also called symmetric SBM. We consider the size of each community to be deterministic.
\begin{defn}\label{defn_sbm}
 Suppose \(K \in\mathbb Z_{\geq 2}\) is fixed,  \(n> 0\) and \(0\leq \rho < \theta \leq 1\). By SBM(\(n, K, \theta, \rho\)) we denote a random graph with \(nK\) vertices that are divided into \(K\) communities of size \(n\) and whose edges are generated according to independent Bernoulli variables of mean \(\theta\) inside the communities and mean \(\rho\) between different communities.
\end{defn}
We will later also discuss the generalization of our results to a more general SBM.

\subsection{AWCD revisited}
This paper follows up on a proposal by Larisa Adamyan, Kirill Efimov and Vladimir Spokoiny for a novel community detection method based on a hypothesis test of homogeneity for an SBM called \emph{adaptive weights community detection} (AWCD) \cite{AWCD}. The idea of this test originates from a likelihood-ratio test for local homogeneity \cite{Polzehl} with applications to image processing. Later, the same test was also used for the problem of adaptive clustering \cite{AWC}. Unfortunately, no theoretical analysis was provided for the AWCD algorithm. We will provide the first theoretical study in this paper and demonstrate that significant modifications are necessary to achieve a good performance on the SBM. We start by introducing the original algorithm and the idea behind it: Let us consider a general SBM with two disjoint communities \(\mathcal C^*_1\) and \(\mathcal C^*_2\). The communities may have different sizes. The edges are independent and follow Bernoulli distributions. For \(i=1, 2\), edges inside community \(\mathcal C^*_i\) follow a Bernoulli distribution with parameter \(\theta_i\), the remaining edges between the two communities follow a parameter \(\rho\). We consider the null hypothesis
\begin{itemize}
 \item[\(H_0\):]  \(\theta_1=\theta_2=\rho \in [0, 1]\)
\end{itemize}
against the alternative
\begin{itemize}
 \item[\(H_1\):] \(\theta_1,\theta_2,\rho \in [0, 1].\)
\end{itemize}
The likelihood-ratio test statistic turns out to be 
\[T = N_{11} \mathcal K\left(\widetilde\theta_{11}, \widetilde\theta_{1\lor 2}\right) + N_{22} \mathcal K\left(\widetilde\theta_{22}, \widetilde\theta_{1\lor 2}\right) + N_{12} \mathcal K\left(\widetilde\theta_{12}, \widetilde\theta_{1\lor 2}\right)\text{,}\]
where 
\begin{itemize}
 \item \(\widetilde\theta_{ii}\) denotes MLE of \(\theta_i\) under \(H_1\),
 \item \(\widetilde\theta_{12}\) denotes MLE of \(\rho\) under \(H_1\),
 \item \(N_{ij}\) denotes the sample size of the respective MLEs and
 \item \(\widetilde\theta_{1\lor 2}\) denotes MLE of \(\rho\) under \(H_0\).
\end{itemize}
Given data in form of an adjacency matrix \(\mathcal Y\), the AWCD algorithm applies this test iteratively on \textit{local communities} \(\mathcal C_i\) that are computed for each node \(i\) and represented by a weight matrix \(W\) such that \(\mathcal  C_i = \{j : W_{ij} = 1\}\). An exact description is given in Algorithm \ref{algorithm}. \cite{AWCD} propose to use the usual graph theoretic neighborhood \(\mathcal Y\) as a starting guess.
Ideally, for some tuning parameter \(\lambda\), the local community structure \(W\), which is updated at each step of the procedure, converges towards the true underlying community structure \(W^*\). 
Note that the proposed starting guess violates the setup of the likelihood ratio test w.r.t. the following points:
\begin{itemize}
 \item Local Communities are \textit{overlapping}.
 \item Local Communities are \textit{small} compared to true communities.
 \item Local Communities are of \textit{low precision}, i.e. they contain many false members.
\end{itemize}
As we will discuss in the following, these violations lead to practical and theoretical limitations of the procedure and motivated us to propose substantial modifications.

In this paper, we address the current gap between theoretical and practical results in Community Detection. Our contributions include:
\begin{itemize}
 \item A novel procedure based on \cite{AWC} with the following modifications:
\begin{itemize}
\item Reducing the bias of the involved estimators.
\item Increasing the size of the initial communities.
\end{itemize}
\item Rates of strong consistency of the new algorithm on the SBM: Contrary to \cite{AWC}, the rate is nearly optimal in the most common case where the quotient between two Bernoulli parameters of a symmetric stochastic block model is constant.
\item Numerical illustrations and validation for our theoretical results.
\end{itemize}
The rest of the paper is organized as follows. In section \ref{sec_results} we present our main results. We start in subsection \ref{subsec_improvement_bias_correction} by discussing rates of strong consistency for both the original version of the algorithm as well as a modified version that removes bias terms. In subsection \ref{subsec_larger_starting_guess} we will show that these results can be extended to sparse graphs by increasing the size of the initial starting guess for each neighborhood. In particular, this extension of the algorithm achieves a nearly optimal rate of strong consistency for stochastic block models using two Bernoulli parameters having a constant quotient. For simplicity, these results are stated for the symmetric stochastic block model. We discuss the generalization to a more general stochastic block model in subsection \ref{subsec_general_sbm}. In the following section \ref{sec_experiments} we present numerical results illustrating the main results of section \ref{sec_results}. All proofs are collected in section \ref{sec_proofs}. In appendix \ref{ref_appendix} we discuss further details on the presented rates of consistency.

\begin{algorithm}
\caption{AWCD algorithm from \cite{AWCD}}
\label{algorithm}
\begin{algorithmic} [1]
\STATE\textbf{input}: adjacency matrix \(\mathcal Y\), number of iterations \(l_{\max}\) and a test threshold \(\lambda \in \mathbb R\)
\STATE initialize the weight matrix \(W^{(0)}=  \mathcal Y\)
\FOR{\(l\) from \(1\) to \(l_{\max}\)}
\STATE \(S =  \left( W^{(0)} \mathcal Y W^{(0)}\right)_{i, j} \)
\STATE \(N_{ij} = |\mathcal C_i|  \left(|\mathcal C_j| - \mathbbm1(i = j)\right)\)
\STATE \(\displaystyle\widetilde\theta_{ij} =   \frac{S_{ij}}{N_{ij}}\)
\STATE \( \widetilde\theta_{i\lor j}= \frac{S_{ii}+2S_{ij}+S_{jj}}{N_{ii}+2N_{ij}+N_{jj}}\)
\STATE compute test statistic \(T= N_{11} \mathcal K\left(\widetilde\theta_{11}, \widetilde\theta_{1\lor 2} \right) + N_{22} \mathcal K\left(\widetilde\theta _{22}, \widetilde\theta_{1\lor 2} \right) + N_{12}  \mathcal K\left(\widetilde\theta_{12}, \widetilde\theta_{1\lor 2}\right)\)
\STATE update weight matrix \(W= \left(\mathbbm{1}\left(T_{ij} \leq \lambda \right)\right)_{i, j}\)
\STATE update initial weight matrix \(W^{(0)} =W\)
\ENDFOR
\STATE \textbf{output}: matrix of binary weights \(W \)
\end{algorithmic}
\end{algorithm}

\section{Results}\label{sec_results}
\subsection{Improvement of rates via bias correction}\label{subsec_improvement_bias_correction}
For simplicity, we start by considering the very simple model SBM(\(n, K, \rho, \theta\)) introduced in Definition \ref{defn_sbm}. As we are interested in the asymptotics \(n \to \infty\), the parameters \(\theta\) and \(\rho\) may depend on \(n\). However, we will not use the index \(n\) explicitly to simplify the notation. In contrast to \(\theta\) and \(\rho\), we consider the number of communities \(K\) to be fixed. We denote the corresponding adjacency matrix by \(\mathcal Y\).
Moreover, we will only consider the first step of algorithm \ref{algorithm} for our theoretical analysis, i.e. \(l_{\max} = 1\).

The first modification of the algorithm that we propose is related to the definition of \(S_{ij}\). This sum is supposed to count the number of edges connecting the initial communities \(\mathcal C_i\) and \(\mathcal C_j\). In particular, after conditioning on \(\mathcal C_i\) and \(\mathcal C_j\), we would like to have a sum of independent Bernoulli variables. However, if we also condition on \(\mathcal Y_{ij} = 1\), the sum contains a deterministic part of size \(|\mathcal C_i|+|\mathcal C_j|-1\) which is with large probability of order \(\theta n\):
 \begin{align*}
  S_{ij} &= \sum_{k, l\notin \{i, j\}} \mathcal Y_{il} \mathcal Y_{lk} \mathcal Y_{kj}+\mathcal Y_{ij} \sum_{k\notin\{i, j\}} \left(\mathcal Y_{ik} \mathcal Y_{ki}+ \mathcal Y_{jk} \mathcal Y_{kj} \right) + \mathcal Y_{ij}^3\\
  &=  \underbrace{\sum_{k, l\notin \{i, j\}} \mathcal Y_{il} \mathcal Y_{lk}\mathcal Y_{kj}}_{\mathcal O( \theta^3 n^2)} + \mathbbm 1(\mathcal Y_{ij} = 1) \left(\underbrace{|\mathcal C_i| + |\mathcal C_j| - 1}_{\mathcal O(  \theta n)}  \right)
 \end{align*}
 As the sum \(S_{ij}\) is of order \(\theta^3 n^2\), it can only be informative as long as \(\theta \gg n^{-\frac{1}{2}}\). This problem can be easily fixed by redefining 
\begin{align*}
S_{ij}& \defined \left(W^{(0)}\mathcal YW^{(0)}\right)_{i, j} - \mathbbm 1(\mathcal Y_{ij}=1) \left(|\mathcal C_i|+|\mathcal C_j| - 1\right)\\
& = \sum_{\mathcal Y_{il}=1, \mathcal Y_{kj} = 1, k, l\notin\{i, j\}} \mathcal Y_{kl}.
\end{align*}
Let us call this debiased version of the algorithm AWCD$_1$ and the original version AWCD\(_1^\circ\). Indeed, this modification improves the rate of \(\theta - \rho\) significantly:
\begin{thm}\label{thm_original_AWCD}
For both versions of the algorithm we consider the asymptotics \(\frac{\theta}{\rho}\equiv C\) as well as the more general condition \(\frac{\theta}{\rho} \leq C\) for the symmetric SBM\((n, K, \theta, \rho)\) with a fixed number of communities \(K\). Consistent exact recovery, i.e. \[\mathbb P (\forall i, j : W_{ij} = W^*_{ij} ) \to 1,\] is achieved after the first step as long as \(\theta \leq \frac{1}{2}\) and \\
\renewcommand{\arraystretch}{1.2}
\begin{tabular}[]{|l|l|l|}
\hline
 & \(\frac{\theta}{\rho} \equiv C\)  & \(\frac{\theta}{\rho} \leq C \)\\
\hline
AWCD\(_1^\circ\) & \( \theta \gg n^{-\frac{1}{2}}\) &\(\theta - \rho \gg \max \{ n^{-\frac{1}{3}}\theta^\frac{1}{3}, n^{-\frac{1}{6}}\theta^\frac{5}{6} (\log n)^\frac{1}{6}\}\)  \\
 AWCD\(_1\)&  \(\theta \gg n^{-\frac{2}{3} }(\log n)^\frac{1}{3} \) &\(\theta - \rho\gg \max\{n^{-\frac{1}{3}} \theta^\frac{1}{2}(\log n)^\frac{1}{6},   n^{-\frac{1}{6}} \theta^\frac{5}{6} (\log n)^\frac{1}{6}\}.\)\\
  \hline
\end{tabular}\\
\end{thm}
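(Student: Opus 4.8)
The plan is to reduce the theorem to a pairwise separation statement for the single test step, and then to control the test statistic $T_{ij}$ by conditioning on the random reference neighbourhoods. Since $W_{ij}=\mathbbm 1(T_{ij}\le\lambda)$, exact recovery after the first step is equivalent to: $T_{ij}\le\lambda$ for every pair $\{i,j\}$ inside a true community, and $T_{ij}>\lambda$ for every cross-community pair. By exchangeability of the vertices it suffices to analyse one generic pair, conditionally on the community labels of $i$ and $j$ (WLOG $i\in\mathcal C^*_1$), and then to take a union bound over the $O((Kn)^2)$ pairs; that union bound is the source of the $\log n$ factors in the table. So the real content is: identify a value $\lambda$ that, with probability $1-o(n^{-2})$, lies strictly between $T_{ij}$ of a co-clustered pair and $T_{ij}$ of a non-co-clustered pair.

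First I would condition on the two reference profiles, i.e. on the rows $\mathcal Y_{i\cdot},\mathcal Y_{j\cdot}$, equivalently on the composition vectors $n_{i,a}:=|\mathcal C_i\cap\mathcal C^*_a|$ and $n_{j,a}$. Standard binomial tail bounds give, uniformly over $i,a$, that $n_{i,a}=\mathbb E n_{i,a}+O(\sqrt{\theta n\log n})$, with $\mathbb E n_{i,1}\asymp\theta n$, $\mathbb E n_{i,a}\asymp\rho n$ for $a\ne1$, hence $|\mathcal C_i|\asymp\theta n$ and $N_{11},N_{22},N_{12}\asymp\theta^2n^2$. Conditionally on the profiles, the debiased sums $S_{11},S_{22},S_{12}$ are (up to a negligible deterministic remainder) sums of independent Bernoulli variables — exactly the edges inside and between the neighbourhoods that are not incident to $i$ or $j$. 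This is precisely the point where the debiasing is used: it strips off the deterministic contribution $\mathbbm 1(\mathcal Y_{ij}=1)(|\mathcal C_i|+|\mathcal C_j|-1)\asymp\theta n$ of the edges touching $i,j$, which AWCD$_1^\circ$ carries along as a bias of relative size $\asymp 1/(\theta n)$ in $\widetilde\theta_{12}$. Bernstein's inequality then yields $|\widetilde\theta_{\bullet\bullet}-\mathbb E[\widetilde\theta_{\bullet\bullet}\mid\text{prof}]|\lesssim\sqrt{\bar\theta\log n/N_{\bullet\bullet}}$, which is only informative when $N_{\bullet\bullet}\bar\theta\gtrsim\log n$, i.e. $\theta^3n^2\gtrsim\log n$; this is the first-column constraint $\theta\gg n^{-2/3}(\log n)^{1/3}$ for AWCD$_1$.

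Next I would compute the conditional means, $\mathbb E[\widetilde\theta_{11}\mid\text{prof}]\approx\rho+(\theta-\rho)\frac{\sum_a n_{i,a}^2}{|\mathcal C_i|^2}$ and the analogous formulas for $\widetilde\theta_{22},\widetilde\theta_{12},\widetilde\theta_{1\lor 2}$, and substitute the concentrated profiles. In the co-clustered case the two neighbourhoods share the same ideal composition, so all four conditional means agree up to a "profile fluctuation" obtained by propagating the $O(\sqrt{\theta n\log n})$ composition errors through those ratios — with a first-order cancellation because the ideal profiles coincide. In the cross-community case $\mathbb E[\widetilde\theta_{12}\mid\text{prof}]$ undershoots $\mathbb E[\widetilde\theta_{11}\mid\text{prof}]\approx\mathbb E[\widetilde\theta_{22}\mid\text{prof}]$ by the signal $\Delta:=\frac{(\theta-\rho)^3}{(\theta+(K-1)\rho)^2}(1+o(1))\asymp\frac{(\theta-\rho)^3}{\theta^2}$. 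Using $\mathcal K(x,y)=\frac{(x-y)^2}{2y(1-y)}(1+o(1))$ — legitimate since all arguments are $\asymp\theta\le\frac12$ and mutually within $o(\theta)$, with remainder cubic in the gaps — the statistic becomes $T_{ij}\asymp\sum_{\bullet\bullet}N_{\bullet\bullet}\frac{(\widetilde\theta_{\bullet\bullet}-\widetilde\theta_{1\lor 2})^2}{\bar\theta}$.

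Finally I would assemble the gap. For a co-clustered pair each $\widetilde\theta_{\bullet\bullet}-\widetilde\theta_{1\lor 2}$ has conditional mean $0$, so $T_{ij}$ is a weighted sum of squared sub-Gaussian deviations and, after the union bound, $T_{ij}\lesssim\log n$ plus a strictly lower-order profile term. For a cross-community pair, $T_{ij}\ge N_{12}\mathcal K(\widetilde\theta_{12},\widetilde\theta_{1\lor 2})\gtrsim N_{12}\Delta^2/\bar\theta\asymp n^2(\theta-\rho)^6/\theta^3$, provided $\Delta$ beats $\sqrt{\log n}$ times the standard deviation of $\widetilde\theta_{12}-\widetilde\theta_{1\lor 2}$; this deviation has two sources — the edge-level term $\sqrt{\bar\theta\log n/N_{12}}\asymp\sqrt{\log n}/(\sqrt\theta\,n)$, whose domination gives the first term $n^{-1/3}\theta^{1/2}(\log n)^{1/6}$ of the second column, and the profile-level term, which here does not cancel (the neighbourhoods have different ideal compositions) and whose domination gives the second term $n^{-1/6}\theta^{5/6}(\log n)^{1/6}$. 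Then $\lambda$ can be taken strictly between $\sup$ over co-clustered $T$'s and $\inf$ over cross-community $T$'s. The AWCD$_1^\circ$ row is the same argument with $\Delta$ (resp. $\theta$) additionally required to dominate the bias $\asymp1/(\theta n)$, which weakens the first term to $n^{-1/3}\theta^{1/3}$ (resp. the first-column condition to $\theta\gg n^{-1/2}$). The main obstacle is this last pair of steps: quantifying how the randomness of $\mathcal C_i,\mathcal C_j$ feeds through the non-linear $\mathcal K$, and separating the co-clustered case (where the first-order cancellation pins $T_{ij}$ at the $O(\log n)$ scale) from the cross-community case (where the absence of that cancellation is exactly what produces the composition-driven second term), all while keeping every $(1+o(1))$, every linearisation remainder and every Bernstein condition uniform over the $O(n^2)$ pairs rather than for a single pair.
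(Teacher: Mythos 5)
Your proposal is essentially the paper's own argument: condition on the 1-neighbourhood compositions, apply Bernstein's inequality both to the compositions and to the (debiased) edge counts, extract the signal $\frac{a-c}{2d}\asymp(\theta-\rho)^3/\theta^2$ from the conditional means, pass to the test statistic via the quadratic expansion of the Kullback--Leibler divergence with Fisher information $\asymp\theta^{-1}$, and close with a union bound over pairs and a threshold placed between the two populations of $T_{ij}$; your treatment of AWCD$_1^\circ$ through the extra bias $\asymp 1/(\theta n)$ likewise matches the paper. One inaccuracy to fix in a full write-up: for co-clustered pairs the profile-driven part of $T_{ij}$ is \emph{not} ``strictly lower-order'' than $\log n$ (even with the first-order cancellation you invoke it is of order $n(\theta-\rho)^2\log n$, and of order $\theta^2 n\log n$ with the paper's cruder bound), so the co-clustered statistics cannot be pinned at the $\log n$ scale; the argument still closes because under the stated assumptions this term is dominated by the cross-community level $n^2(\theta-\rho)^6/\theta^3$ --- this domination is exactly the paper's condition $\delta\theta\ll(\theta-\rho)^3/\theta^2$ and is where the term $n^{-\frac{1}{6}}\theta^{\frac{5}{6}}(\log n)^{\frac{1}{6}}$ actually enters.
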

\begin{rmk}
In the above table, we use the notation \(x\gg y\) to describe the following: There exists a constant \(c\) such that the statement of the theorem holds as long as \(c x > y\).
\end{rmk}
\begin{rmk}
The constant \(\frac{1}{2}\) may be replaced by any positive constant smaller than 1.
\end{rmk}

We have visualized the consistency regime for \(\frac{\theta}{\rho}\leq C\) in a log$_n$-log$_n$ plot of the quotient \(\frac{\theta-\rho}{\theta}\) against the parameter \(\theta\) in Figure \ref{fig_regime_k_1}. Note that in the case \(\frac{\theta}{\rho}\equiv C\), the minimax rate of consistent exact recovery is \((\log n) n^{-1}\), matching the minimal rate ensuring consistent connectivity of the graph, c.f. \cite{survey_sbm}.

\begin{figure}[h]
\begin{center}
   \includegraphics[scale = 0.75]{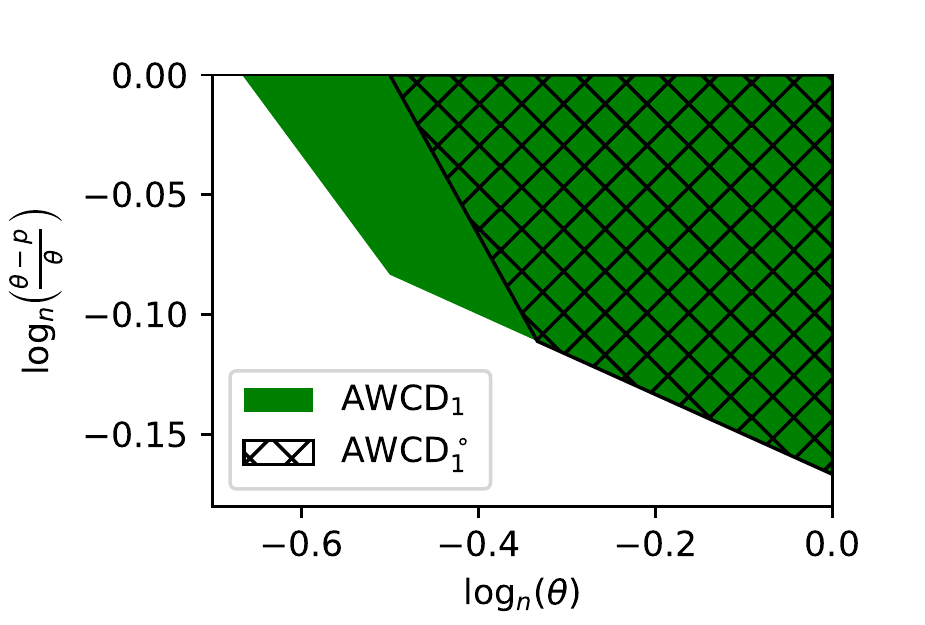}

   \caption{A log$_n$-log$_n$ plot of the regime of strong consistency for AWCD\(_1\) and AWCD\(_1^\circ\)}
   \label{fig_regime_k_1}
   \end{center}
\end{figure}

\subsection{Increase of starting neighborhood for sparse graphs}\label{subsec_larger_starting_guess}
Although the introduced modification improves the rate of strong consistency, the obtained rate is still far from optimal. As previously mentioned, the algorithm violates the likelihood-ratio test setup with respect to the assumptions on the starting guess. In particular, the local communities are relatively small and also contain many members from different communities. The following observation shows, that a better starting guess does indeed improve the rate further. 
\begin{obs} \label{obs_improved_starting_guess}
Suppose instead of \(W^{(0)}=\mathcal Y\) we are given an improved starting guess for each \(i\) in form of a random subset of its true community of the same size \(\asymp \theta n\) as before. We assume \(\frac{\theta}{\rho} \equiv C\) and \(\theta < \frac{1}{2}\). Then AWCD\(_1\) achieves consistent exact recovery after the first step as long as 
\[\theta \gg n^{-\frac{2}{3}} (\log n)^\frac{1}{3}.\]
If the size of the above starting guess is increased to \(\asymp \theta^\frac{1}{2} n\), then this rate improves to
\[\theta \gg (\log n)^\frac{1}{2} n^{-1}.\]
\end{obs}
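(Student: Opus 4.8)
The plan is to adapt the proof of Theorem~\ref{thm_original_AWCD}, using that the new starting communities lie \emph{entirely} inside the true ones: this removes the mixed inter-community contributions that made the original building blocks noisy and turns the analysis into a clean comparison of the test statistic against the threshold.

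Condition on the (random, $\mathcal Y$-independent) starting guess. Every $\mathcal C_i$ has the same cardinality $m$, with $m\asymp\theta n$ in the first part and $m\asymp\theta^{1/2}n$ in the second, so $N_{ii}=m(m-1)$ and $N_{ij}=m^2$ are all $\asymp m^2$. Since $\mathcal C_i\subseteq\mathcal C^*(i)$, the building block $S_{ij}$ equals, up to deterministic remainders that are of lower order than its mean, a sum of $\Theta(m^2)$ independent $\mathrm{Bernoulli}(p_{ij})$ variables, where $p_{ij}=\theta$ if $i,j$ belong to the same true community and $p_{ij}=\rho$ otherwise (when the communities differ, $\mathcal C_i$ and $\mathcal C_j$ are disjoint and the sum is exactly $m^2$ i.i.d.\ $\mathrm{Bernoulli}(\rho)$). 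Hence $\mathbb E\,\widetilde\theta_{ij}$ equals $\theta(1+o(1))$ or $\rho(1+o(1))$, while $\mathbb E\,\widetilde\theta_{i\lor j}\asymp\theta$.

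Next I would prove uniform concentration. By Bernstein's inequality and a union bound over the $O(n^2)$ index pairs, with probability $1-o(1)$ one has $|\widetilde\theta_{ij}-p_{ij}|\lesssim\sqrt{\theta\log n/m^2}+\theta/m$ simultaneously for all $i,j$, and likewise for $\widetilde\theta_{i\lor j}$ about its mean. The bound $m^2\theta\gtrsim\log n$ --- which is precisely $\theta^3n^2\gtrsim\log n$ in the first case and $\theta^2n^2\gtrsim\log n$ in the second --- forces this deviation to be at most $\varepsilon_0\theta$ for any prescribed small $\varepsilon_0$, after shrinking the constant in ``$\gg$''. On this good event each $\widetilde\theta_{ij}$ lies within a $(1\pm\varepsilon_0)$ factor of $\theta$ or of $\rho$, and I split into two cases. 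For $i,j$ in the same community, all of $\widetilde\theta_{ii},\widetilde\theta_{jj},\widetilde\theta_{ij},\widetilde\theta_{i\lor j}$ are within $O(\varepsilon_0\theta)$ of $\theta$, and $\mathcal K(p,q)\lesssim(p-q)^2/\bigl(q(1-q)\bigr)$ together with $\theta<\tfrac12$ gives each of $\mathcal K(\widetilde\theta_{ii},\widetilde\theta_{i\lor j}),\mathcal K(\widetilde\theta_{jj},\widetilde\theta_{i\lor j}),\mathcal K(\widetilde\theta_{ij},\widetilde\theta_{i\lor j})\lesssim\log n/m^2$, hence $T_{ij}\lesssim\log n$. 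For $i,j$ in different communities, $\widetilde\theta_{ij}$ is near $\rho$, whereas --- because $N_{ii},N_{ij},N_{jj}$ are all $\asymp m^2$ and $\theta/\rho\equiv C>1$ --- the pooled estimate $\widetilde\theta_{i\lor j}$ is a convex combination of quantities $\asymp\theta$ and of $\rho$ that stays bounded away from \emph{both} $\rho$ and $\theta$ by a constant relative factor; therefore $\mathcal K(\widetilde\theta_{ij},\widetilde\theta_{i\lor j})\gtrsim\mathcal K\bigl(\rho,(\theta+\rho)/2\bigr)\asymp\theta$ and $T_{ij}\ge N_{ij}\,\mathcal K(\widetilde\theta_{ij},\widetilde\theta_{i\lor j})\gtrsim m^2\theta$.

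It then suffices to place the threshold between the two bounds, e.g.\ $\lambda=A\log n$ with $A$ a large enough absolute constant: on the good event $W_{ij}=W^*_{ij}$ for every pair as soon as $m^2\theta\gg\log n$, which reads $\theta\gg n^{-2/3}(\log n)^{1/3}$ for $m\asymp\theta n$ and $\theta\gg n^{-1}(\log n)^{1/2}$ for $m\asymp\theta^{1/2}n$, exactly the stated rates. I expect the delicate steps to be, first, making the Bernstein concentration genuinely uniform over all $O(n^2)$ pairs while checking that the deterministic remainders in $S_{ij}$ are of strictly smaller order than the signal $m^2\theta$; and, second, the separation of the two hypotheses by $\widetilde\theta_{i\lor j}$ in the inter-community case, which is exactly where $\theta/\rho\equiv C$ enters --- under only $\theta/\rho\le C$ the convex combination could degenerate towards $\rho$, and one would be forced to reintroduce the $(\theta-\rho)$-type factors of Theorem~\ref{thm_original_AWCD}.
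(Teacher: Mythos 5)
Your proposal is correct and follows essentially the same route as the paper: both exploit that the pure starting guess makes the population gap of order \(\theta-\rho\asymp\theta\) (rather than \((\theta-\rho)^3/\theta^2\)) and use Bernstein concentration of \(S_{ij}\) at scale \(\sqrt{m^2\theta\log n}\), which under \(\theta/\rho\equiv C\) reduces to the condition \(m^2\theta\gg\log n\) and hence to the two stated rates for \(m\asymp\theta n\) and \(m\asymp\theta^{1/2}n\). Your explicit comparison of \(T_{ij}\) against a threshold \(A\log n\) is just a more self-contained rendering of the paper's argument, which obtains the same conclusion by rerunning the proof of Theorem \ref{thm_original_AWCD} with \(|\mathcal C_i\cap\mathcal C_\alpha^*|=\mathbbm 1(i\in\mathcal C_\alpha)m\) and the improved stochastic bound.
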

Considering that the reconstruction of the community structure from such a good starting guess is trivial, it is surprising, that the rate from Theorem \ref{thm_original_AWCD} does not improve from increased precision of the starting guess (at least in the case \(\frac{\theta}{\rho} \equiv C\)). However, we observe the expected improvement after additionally increasing the size of the starting guess. The reason for this phenomenon is that for the computation of each test statistic \(T_{ij}\) we only use a part of the available data \(\mathcal Y\). The larger the starting guess is, the larger part of the data we use. If the size of the starting guess is too small, we cannot expect the algorithm to recover the community structure correctly, even if the starting guess is a subset of the true community.

As we will see in the following, the algorithm also benefits from an increased starting guess if the precision is not increased or even slightly worse. A simple way to increase the size of the starting guess is to take into account members that are connected via a path of minimal path length \(k\). We call this set of members \emph{\(k\)-neighborhood of \(i\)}  and denote it by \(\mathcal C_i^{k}\). Similarly, we can also work with the neighborhood \(\mathcal C_i^{\leq k}\) of members that are connected via a path of length \(k\) or smaller. If the graph is sparse enough, these neighborhoods are almost of the same size and we expect very similar results. To simplify some technical details, we will focus on using the neighborhood \(\mathcal C_i^{k}\).

\begin{figure}[]
\begin{minipage}{0.45\linewidth}
   \includegraphics[width = \linewidth]{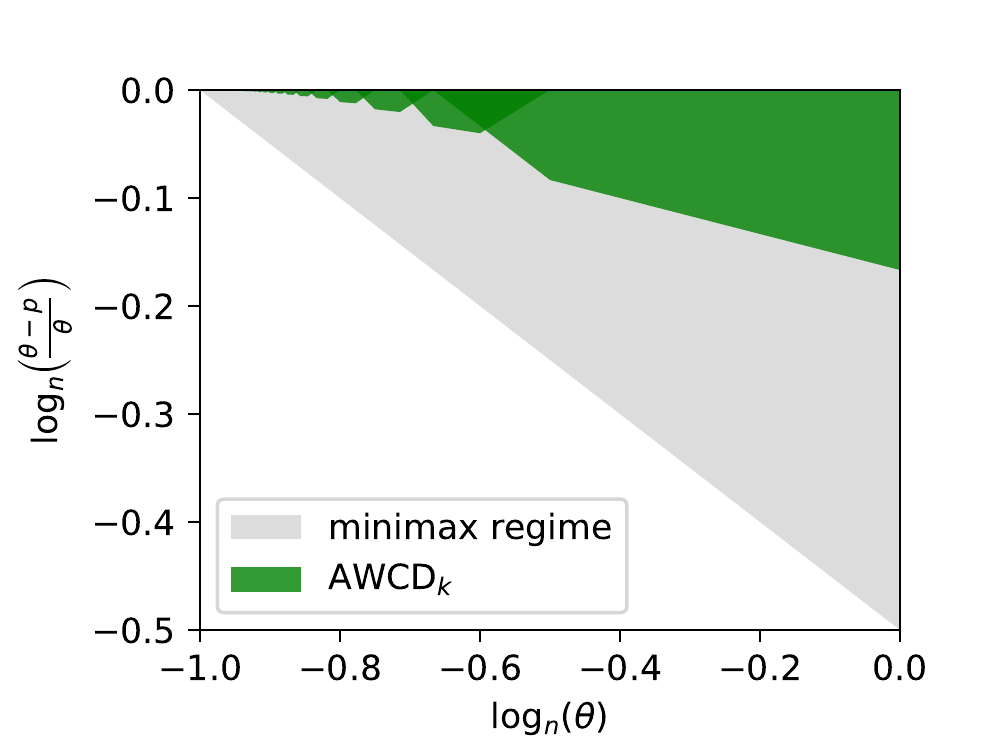}
\end{minipage}
\begin{minipage}{0.45\linewidth}
  \includegraphics[width = \linewidth ]{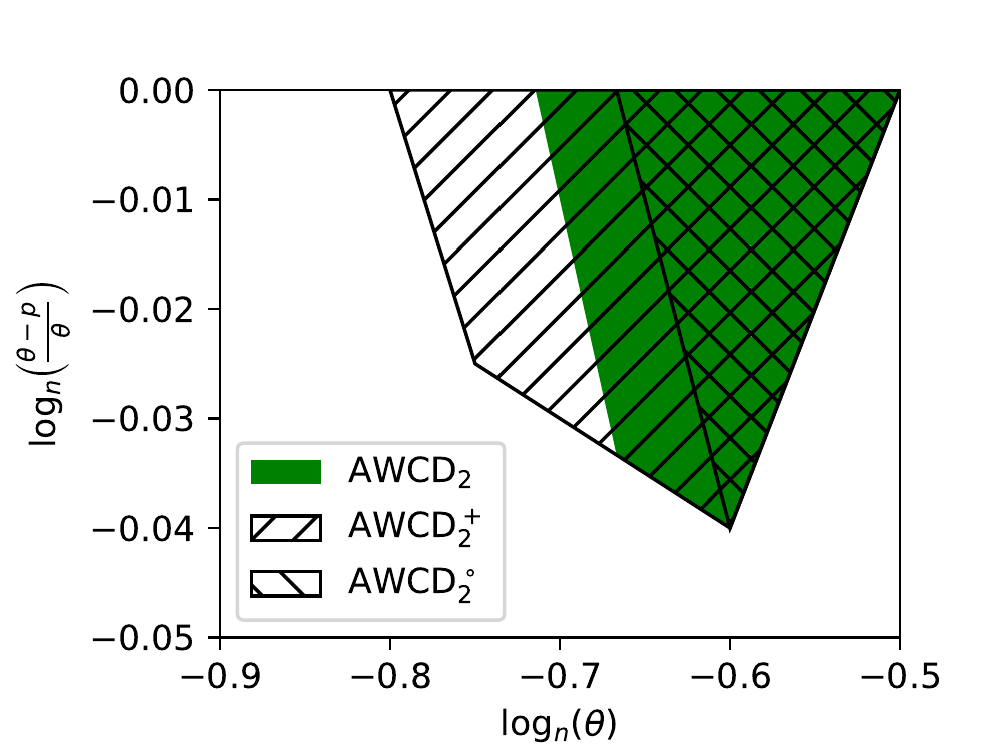}
\end{minipage}
\caption{Left: A log$_n$-log$_n$ plot of the strong consistency regime of AWCD$_k$ under a proper choice of the neighborhood parameter \(k\) in comparison to the minimax regime \cite{sbm_general_minimax}. Right: A log$_n$-log$_n$ plot compares the consistency regime of the three versions of the algorithm in case \(k=2\).}
\label{fig_regime_k>1}
\end{figure}

Moreover, an alternative but similar approach to increase the part of the data \(\mathcal Y\) used for the computation of test statistic \(T_{ij}\) is to instead (or additionally) change the way how we count the number of connections between two different local communities: Instead of only counting the edges that directly connect the two local communities, we can also count connections via paths of a certain (maximum) length. This approach should lead to very similar results to those presented in the following. 

Let us modify the definitions in the first step of algorithm \ref{algorithm} to
\begin{align*}
 S_{ij}  &\defined \sum_{v_1\in\mathcal C_i^k, v_2\in\mathcal C_j^k} \mathcal Y_{v_1 v_2} \text{ and}\\
 N_{ij} &\defined  |\mathcal C_i^k | \left( |\mathcal C_j^k| - \mathbbm1(i=j)\right).
\end{align*}
We will call this version of the algorithm AWCD\(_k^\circ\). Moreover, we also consider an analogous bias correction term as before:
\begin{align*}
 S_{ij}  &\defined \sum_{v_1\in\mathcal C_i^k, v_2\in\mathcal C_j^k} \mathcal Y_{v_1 v_2} - \mathbbm1(i=j)\left(|\mathcal C_i^k| + |\mathcal C_j^k|\right)
 \end{align*}
We denote the corresponding algorithm by AWCD\(_k\). We will show that the bias correction does significantly improve the rates. However, this is only an approximate bias correction. A completely unbiased version AWCD\(_k^+\) can for example be defined via
\begin{align*}
 S_{ij}  &\defined \sum_{v_1\in\mathcal C_i^k\setminus \mathcal C_j^{k-1}, v_2\in\mathcal C_j^k\setminus \mathcal C_i^{k-1}} \mathcal Y_{v_1 v_2}\text{ and} \\
 N_{ij} & \defined  \sum_{v_1\in\mathcal C_i^k\setminus \mathcal C_j^{k-1}\neq v_2\in\mathcal C_j^k\setminus \mathcal C_i^{k-1}} 1.
 \end{align*}
leading to even better rates. Unfortunately, we do not know if there exists an efficient implementation. Conversely, the other two versions can be implemented efficiently using matrix multiplications: The starting neighborhood guesses $C_i^k$ can be represented by the weight matrix 
  \[W^{(0)} = \left(\mathbbm 1\left(\left(\sum_{l=1}^k \mathcal Y^l \right)_{ij}  > 0\right) - \mathbbm 1\left(\left(\sum_{l=1}^{k-1} \mathcal Y^l \right)_{ij}  > 0\right)\right)_{i, j}\]
  allowing us to compute \(S\) as previously as a product of matrices \(S=W^{(0)}\mathcal YW^{(0)}\). Computation of \(N\) and the bias correction only requires \(\mathcal O(n^2)\) operations. Thus, the overall complexity is the same as the complexity of the involved (sparse) matrix multiplications. 
Consequently, AWCD\(_k\) is the main algorithm that we propose in this paper. The other two versions are just considered for the sake of comparison.

This extension of the algorithm adds a tuning parameter \(k\) to the procedure. However, this does not increase the parameter space much, as \(k\) will only take very few different values in practice, e.g. \(k\in\{1, 2,3, 4\}\). Moreover, it may be estimated from the sparsity of the given adjacency matrix. 

\cite{AWCD} demonstrated, that starting from the 1-neighborhood, multiple iterations of the algorithm can improve the quality of the result further if the first step already produces a reasonably good estimation of the community structure. This idea of using the output of the previous iteration as a new starting guess can of course also be applied to the algorithms introduced above. However, this approach will not improve the rate any further, see section \ref{sec_experiments}.

In the following theorem, we collect the rates of strong consistency for the cases $\frac{\theta}{\rho}\equiv C$ and $\frac{\theta}{\rho} \leq C$ for each of the three algorithms introduced above. The rates are visualized in a log$_n$-log$_n$ plot in Figure \ref{fig_regime_k>1}. 
\begin{thm}\label{thm_consistency_AWCD_general_k}
Suppose \(k\geq 2\). Consistent exact recovery is achieved on SBM($n, K, \theta, \rho$) after the first step as long as \\

\renewcommand{\arraystretch}{1.2}
\begin{tabular}[]{|l|l|l|}
\hline
 & \(\frac{\theta}{\rho} \equiv C\)  & \(\frac{\theta}{\rho} \leq C \)\\
\hline
AWCD\(_k^\circ\) & \(n^{-\frac{k}{k+1}} \ll \theta \ll n^{-\frac{k-1}{k}} \) &\(\theta - \rho \gg \max \{ C_1, C_2^\circ\}\)  \\
 AWCD\(_k\)&  \( n^{-\frac{2k+1}{2k+3}} (\log n)^\frac{1}{2k+3} \ll \theta \ll n^{-\frac{k-1}{k}}\) &\(\theta - \rho \gg \max \{ C_1, C_2\}\) \\
  AWCD\(_k^+\)&   \(n^{-\frac{2k}{2k+1}} (\log n)^\frac{1}{2k+1} \ll \theta \ll n^{-\frac{k-1}{k}}\) &\(\theta - \rho \gg \max \{ C_1, C_2^+\}\) \\
  \hline
\end{tabular}\\

for
\begin{align*}
 C_1 &= \theta^\frac{3k+1}{2k+1} n^\frac{k-1}{2k+1} + \theta^\frac{4k+1}{4k+2}n^{-\frac{1}{4k+2}} (\log n)^\frac{1}{4k+2},\\
 C_2^\circ &=\theta^\frac{k}{2k+1}n^{-\frac{k}{2k+1}},\\
 C_2&= \theta^\frac{2k-1}{4k+2}n^{-\frac{1}{2}} (\log n)^\frac{1}{4k+2}\text{ and}\\
 C_2^+ &= \theta^\frac{1}{2}n^{-\frac{k}{2k+1}} (\log n)^\frac{1}{4k+2}.
\end{align*}
\end{thm}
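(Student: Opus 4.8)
The plan is to turn exact recovery into a uniform calibration statement for the test and then run, for each of the three variants, the same three-layer argument. Exact recovery holds as soon as, simultaneously over all $\mathcal O\big((nK)^2\big)$ ordered pairs $(i,j)$, we have $T_{ij}\le\lambda$ whenever $i,j$ lie in the same community and $T_{ij}>\lambda$ otherwise; I would fix $\lambda$ of order $\log n$, positioned between the typical size of a within‑community $T_{ij}$ and the deterministic part of a between‑community $T_{ij}$, and finish by a union bound — the source of the $\log n$ factors in the rates. Since AWCD$_k^\circ$, AWCD$_k$ and AWCD$_k^+$ differ only in the (de)biasing term appended to $S_{ij}$, the bulk of the analysis is common; only the magnitude of a residual bias in $S_{ij}$ changes, and this residual is exactly what produces the three different constraints $C_2^\circ$, $C_2$, $C_2^+$.

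The first real step is a structural analysis of the random neighborhoods $\mathcal C_i^k$. In the regime $\theta\ll n^{-(k-1)/k}$ one has $(\theta n)^k\ll n$, so the within‑community $k$-ball around a vertex does not saturate its community and is, with high probability uniformly over all $nK$ vertices, locally tree‑like with $|\mathcal C_i^k\cap(\text{community of }i)|$ concentrating around a deterministic $m_k\asymp(\theta n)^k$; the lower bounds on $\theta$ in the table guarantee these branching‑process counts concentrate after the union bound. In the same step I would quantify (a) the number of "wrong'' vertices of $\mathcal C_i^k$ — those in a community other than that of $i$, which under $\theta/\rho\le C$ is a constant fraction and which degrades the effective separation — and (b) the pairwise overlaps $|\mathcal C_i^k\cap\mathcal C_j^{k-1}|$ and $|\mathcal C_i^k\cap\mathcal C_j^{k}|$, the origin of the systematic error in $S_{ij}$: untouched by AWCD$_k^\circ$, partly cancelled by the $\mathbbm 1(i=j)(|\mathcal C_i^k|+|\mathcal C_j^k|)$ term in AWCD$_k$, and eliminated by the set exclusions defining AWCD$_k^+$. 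I expect this bookkeeping — uniform control of sizes, contamination and overlaps over all vertices and over the whole admissible range of $\theta$ — to be the main obstacle; everything downstream is conditional on the resulting high‑probability event.

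Conditionally on that event, $S_{ij}$ minus its known bias is a sum of independent Bernoulli variables, so Bernstein's inequality gives $\big|\widetilde\theta_{ab}-\mathbb E\widetilde\theta_{ab}\big|\lesssim \sqrt{\theta\log n/N_{ab}}+\log n/N_{ab}$ simultaneously for all the quantities $\widetilde\theta_{11},\widetilde\theta_{22},\widetilde\theta_{12},\widetilde\theta_{1\lor2}$ attached to every pair, while each $\mathbb E\widetilde\theta_{ab}$ is a mixture of $\theta$ and $\rho$ whose weights are dictated by the neighborhood compositions from (a) and whose shift from the "clean'' value is controlled by (b). For $i,j$ in the same community the four quantities are mutually within $\mathcal O\big(\sqrt{\theta\log n/N}+\text{bias}\big)$, so the expansion $\mathcal K(x,y)=\frac{(x-y)^2}{2y(1-y)}\big(1+o(1)\big)$ near $x=y$ gives $T_{ij}\lesssim \log n + N\,\text{bias}^2/\theta$, and imposing that the second term stay $\lesssim\lambda$ is precisely what produces the common constraint $C_1$ — its first, $\log$-free term encoding the deterministic contamination/near‑saturation effect of size $\asymp\theta\,(m_k/n)^{1/(2k+1)}=\theta^{(3k+1)/(2k+1)}n^{(k-1)/(2k+1)}$, its second term the propagated stochastic error. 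For $i,j$ in different communities, monotonicity and convexity of $\mathcal K$ give a deterministic lower bound on $T_{ij}$ of order $N_{12}(\theta-\rho)^2/\theta$ once contamination and bias are accounted for, and requiring this to dominate both $\lambda\asymp\log n$ and the stochastic fluctuations of the $\widetilde\theta_{ab}$ yields $\theta-\rho\gg\max\{C_1,C_2^{(\cdot)}\}$, the three values $C_2^{(\cdot)}$ being the three residual‑bias levels fed through $\mathcal K$.

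Finally, the $\theta/\rho\equiv C$ column follows by substituting $\theta-\rho\asymp\theta$ into the conditions above and checking that $C_1$ is negligible relative to $\theta$ throughout $\theta\ll n^{-(k-1)/k}$ (using $m_k/n=(\theta n)^k/n\ll1$ and $\theta n\gg\log n$), which collapses the requirement to the displayed two‑sided bounds on $\theta$; the case $k=1$, excluded by the hypothesis $k\ge2$, is Theorem~\ref{thm_original_AWCD} and is handled by the same scheme with $\mathcal C_i^1$ the graph neighborhood, debiasing term $\mathbbm 1(\mathcal Y_{ij}=1)(|\mathcal C_i|+|\mathcal C_j|-1)$, and $m_1\asymp\theta n$.
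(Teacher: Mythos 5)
Your high-level architecture (concentration of the $k$-neighborhood compositions, control of overlaps $\mathcal C_i^k\cap\mathcal C_j^{k-1}$, conditioning so that $S_{ij}$ splits into a residual bias plus an independent Bernoulli sum, Bernstein, quadratic expansion of $\mathcal K$, union bound) matches the paper's proof, and your qualitative account of how the three variants differ in residual bias is correct. But there is a genuine quantitative gap at the heart of the argument: you take the separation between same- and different-community pairs to be of order $\theta-\rho$, asserting a between-community lower bound $T_{ij}\gtrsim N_{12}(\theta-\rho)^2/\theta$. Because under $\theta/\rho\le C$ each $\mathcal C_i^k$ contains a constant fraction of wrong-community vertices, the relevant separation is not $\theta-\rho$ but the difference of the \emph{conditional edge densities between the two contaminated neighborhoods}. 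In the paper this is computed explicitly: with $a_k,b_k$ the within/cross composition coefficients one has $a_k-b_k=(\theta-\rho)^k$, hence the numerators satisfy $a-c=(\theta-\rho)(a_k-b_k)^2=(\theta-\rho)^{2k+1}$ while the denominator is of order $\theta^{2k}$, so $\mathbb E[\widetilde\theta_{i\lor j}-\widetilde\theta_{ij}]\asymp(\theta-\rho)^{2k+1}/\theta^{2k}$, an attenuation of the naive separation by the factor $\bigl((\theta-\rho)/\theta\bigr)^{2k}$. Every displayed rate in the theorem comes from conditions of the form $(\theta-\rho)^{2k+1}\gg(\text{bias or fluctuation})\cdot\theta^{2k}$, which is where the exponents $\tfrac{3k+1}{2k+1}$, $\tfrac{4k+1}{4k+2}$, $\tfrac{2k-1}{4k+2}$, $\tfrac{k}{2k+1}$ originate; with your claimed $N_{12}(\theta-\rho)^2/\theta$ bound the same scheme would output much weaker conditions and could not reproduce $C_1,C_2^\circ,C_2,C_2^+$.

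A symptom of this gap is your derivation of $C_1$: you identify the bias level in $\widetilde\theta$ as $\theta\,(m_k/n)^{1/(2k+1)}$, but the exponent $1/(2k+1)$ has no source in your bookkeeping — the actual relative bias from near-saturation/overlap is of order $\theta^{k}n^{k-1}$, giving a bias in $\widetilde\theta$ of order $\theta^{k+1}n^{k-1}$, and $C_1$'s first term only appears after comparing this with the attenuated signal $(\theta-\rho)^{2k+1}/\theta^{2k}$ and solving for $\theta-\rho$. Likewise, $C_1$'s second (logarithmic) term does not come from the Bernstein fluctuation of $S_{ij}$ (that produces the $C_2^{(\cdot)}$ family, specifically $C_2^+$), but from the need to choose a neighborhood-size accuracy $\epsilon$ compatible with both $\epsilon\gg\sqrt{\theta\log n/n}$ (for the union bound over all vertices) and $\epsilon\ll(\theta-\rho)^{2k+1}/\theta^{2k}$-type upper bounds. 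So the proposal as written would not establish the stated theorem; it needs the explicit computation of the composition coefficients $a_k,b_k$ (or an equivalent) showing the $(\theta-\rho)^{2k+1}/\theta^{2k}$ separation, and the final conditions must be re-derived against that scale.
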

\begin{rmk}
 In particular, if \(k\) is tuned correctly, the algorithm AWCD$_k$ nearly achieves the optimal rate of strong consistency for the case \(\frac{\theta}{\rho}\equiv C\) as the regimes for different \(k\) are overlapping. 
\end{rmk}

\subsection{General SBM}\label{subsec_general_sbm}
We suspect that any of the above results may be generalized to an SBM having communities of different sizes and more Bernoulli parameters. Indeed, continuing the study of \(\text{AWCD}_1\), we can extend the results of Theorem \ref{thm_original_AWCD} to more general stochastic block models.

\begin{prop}\label{Proposition_different_block_size}
We consider the SBM with communities of different sizes \(n_1, \dots, n_K\) and only two Bernoulli parameters \(\theta > \rho\) as before. We denote the minimal and maximal community size by \(n_{\min}\) and \(n_{\max}\). Then \(\text{AWCD}_1\) achieves consistent exact recovery after the first step (under the asymptotics \(n_{\min}\to\infty\) and \(\frac{\theta}{\rho}\leq C\)) as long as 
\[\theta - \rho\gg \left(\frac{n_{\max}}{n_{\min}}\right)^\frac{2}{3}\max\{n_{\min}^{-\frac{1}{3}} \theta^\frac{1}{2}(\log n_{\min})^\frac{1}{6},   n_{\min}^{-\frac{1}{6}} \theta^\frac{5}{6} (\log n_{\min})^\frac{1}{6}\}\]
and \(\theta < \frac{1}{2}\) (or any constant smaller than 1).
\end{prop}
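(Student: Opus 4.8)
The plan is to re-run the proof of Theorem~\ref{thm_original_AWCD} for $\text{AWCD}_1$ in the regime $\frac{\theta}{\rho}\le C$, now carrying the dependence on the individual community sizes $n_1,\dots,n_K$ through every estimate. Writing $c(i)$ for the true label of vertex $i$ and $\mathcal C^*_c$ for the true community with label $c$, exact recovery after one step is the event that, for every pair $(i,j)$, one has $T_{ij}\le\lambda$ when $c(i)=c(j)$ and $T_{ij}>\lambda$ when $c(i)\ne c(j)$, for a suitable threshold $\lambda\asymp\log n_{\min}$. Since $K$ is fixed there are $\asymp n_{\max}^2$ pairs, so it suffices to control each $T_{ij}$ with failure probability $o(n_{\max}^{-2})$ and take a union bound; because the claimed bound is only non-vacuous when $n_{\max}$ and $n_{\min}$ are polynomially comparable, $\log n_{\max}\asymp\log n_{\min}$ and the exact argument of the logarithm is immaterial.

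First I would fix, on a high-probability event, the geometry of the initial neighborhoods $\mathcal C_i=\{l:\mathcal Y_{il}=1\}$. Each count $|\mathcal C_i\cap\mathcal C^*_c|$ is a sum of independent Bernoulli variables with mean $\theta n_c$ if $c=c(i)$ and $\rho n_c$ otherwise, so a Chernoff/Bernstein bound gives $|\mathcal C_i\cap\mathcal C^*_c|=(1+o(1))\bigl(\theta\,\mathbbm1(c=c(i))+\rho\,\mathbbm1(c\ne c(i))\bigr)n_c$ uniformly in $i$ and $c$, and hence $|\mathcal C_i|\asymp\theta n_{c(i)}+\rho\sum_{c\ne c(i)}n_c$, which under $\frac{\theta}{\rho}\le C$ and fixed $K$ lies between $\asymp\theta n_{\min}$ and $\asymp\theta n_{\max}$. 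Conditionally on all these neighborhoods, the debiased $S_{ij}$ is a sum of $N_{ij}\asymp|\mathcal C_i|\,|\mathcal C_j|$ independent Bernoulli edge indicators, so that the conditional mean of $\widetilde\theta_{ij}$ equals $\rho+(\theta-\rho)\sum_d\phi^{(i)}_d\phi^{(j)}_d$ up to lower-order terms, where $\phi^{(i)}_d=|\mathcal C_i\cap\mathcal C^*_d|/|\mathcal C_i|$, and similarly for $\widetilde\theta_{ii},\widetilde\theta_{jj}$.

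Next I would decompose $\widetilde\theta_{ij}-\widetilde\theta_{1\lor2}$ (and the analogous differences involving $\widetilde\theta_{ii}$, $\widetilde\theta_{jj}$) into a deterministic signal, a stochastic fluctuation controlled by Bernstein at scale $\sqrt{N_{ij}^{-1}\theta\log n_{\min}}+N_{ij}^{-1}\log n_{\min}$, and a bias coming from the inexactness of the correction term $\mathbbm1(i=j)(|\mathcal C_i|+|\mathcal C_j|-1)$ for unequal neighborhoods and from the cross-correlations among $S_{ii},S_{ij},S_{jj}$ through shared vertices and through the dependence of $\mathcal C_i$ on edges that $S$ also counts --- exactly the source of the $\theta^{5/6}$-type term in Theorem~\ref{thm_original_AWCD}. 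Expanding $\mathcal K(p,q)=\frac{(p-q)^2}{2q(1-q)}(1+o(1))$ with $q\asymp\theta$ turns the two requirements into: on the null side the bias contribution to $T_{ij}$ stays $\lesssim\log n_{\min}$, and on the alternative side the signal contribution is $\gg\log n_{\min}$ and dominates the bias. A computation with the frequencies $\phi^{(i)}$ shows that when $c(i)\ne c(j)$ the signal is $\asymp(\theta-\rho)\bigl\|\phi^{(i)}-\phi^{(j)}\bigr\|_2^2$ up to constants, and that in the worst configuration --- one endpoint in a smallest community, whose neighborhood is dominated by the $\rho$-part of a largest community --- this is smaller than in the symmetric model by a power of $n_{\min}/n_{\max}$, while the sample sizes $N_{ij}$ entering that pair are governed by $n_{\min}$ and $n_{\max}$ together. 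Balancing the diluted signal against $\log n_{\min}$ and against the two bias contributions, and using $\frac{\theta}{\rho}\le C$ to absorb all implicit constants, reproduces the stated threshold with the prefactor $(n_{\max}/n_{\min})^{2/3}$; the hypothesis $\theta<\frac{1}{2}$ (hence $\theta-\rho<\frac{1}{2}$) is precisely what bounds $n_{\max}/n_{\min}$ so that the requirement is not vacuous.

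The hard part will be the bias and cross-correlation bookkeeping in the unbalanced regime: in the symmetric model several error terms are of a common order and can be grouped, whereas here each must be re-estimated as a function of the $n_c$ and then weighed against the \emph{size-dependent} signal, and one must confirm that the binding pair is of the extreme type used above and that no intermediate size profile does worse. I would organize this by parametrizing the whole requirement through $\delta=n_{\min}/n_{\max}\in(0,1]$ and proving that it is monotone in $\delta$, so that it suffices to check the two extreme configurations; the remaining ingredients (Chernoff for the neighborhood sizes, Bernstein for the conditional sums, and the quadratic expansion of $\mathcal K$) are then routine and essentially as in the proof of Theorem~\ref{thm_original_AWCD}.
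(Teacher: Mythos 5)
Your skeleton is the same as the paper's: condition on the $1$-neighborhoods, use Bernstein to pin down their composition up to a tolerance $\epsilon$, compute the conditional means of $\widetilde\theta_{ii},\widetilde\theta_{jj},\widetilde\theta_{ij}$, separate null from alternative via the quadratic expansion of $\mathcal K$ with Fisher information $\asymp\theta^{-1}$ (this is what $\theta<\frac12$ is for), bound the stochastic fluctuations by Bernstein, and balance the $\epsilon$-dependent and $\epsilon$-independent requirements to get the stated threshold. Where you genuinely diverge is the decisive step, the lower bound on the mean separation when $c(i)\neq c(j)$. The paper does this by brute-force algebra: it writes the difference of conditional means as $(A_{ij}+B_{ij}+C_{ij})$ over an explicit denominator, shows that the sum over the two orderings $i\leftrightarrow j$ equals $\rho(\theta-\rho)^3(n_i^2+n_j^2)T$ plus nonnegative terms, concludes that after possibly swapping $i$ and $j$ one difference is $\gtrsim\frac{(\theta-\rho)^3}{\theta^2}\frac{n_{\min}^2}{n_{\max}^2}$, and then uses that $\widetilde\theta_{i\lor j}$ is a weighted mean of the three estimators (with comparable weights $d_{ii},d_{jj},d_{ij}$) to transfer this to at least one KL term of the test. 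Your route via the frequency vectors $\phi^{(i)}$ is cleaner and in fact handles the "which of the three estimators separates" issue automatically, since $\mathbb E\widetilde\theta_{ii}+\mathbb E\widetilde\theta_{jj}-2\mathbb E\widetilde\theta_{ij}\approx(\theta-\rho)\bigl\|\phi^{(i)}-\phi^{(j)}\bigr\|_2^2$ forces one estimator away from the pooled mean once the weights are comparable; and a direct computation (using $D_i=\rho\sum_c n_c+(\theta-\rho)n_{c(i)}\asymp\theta n_{\max}$ and $\theta D_j-\rho D_i\geq(\theta-\rho)\rho\max\{n_{c(j)},\sum_{c\neq c(i)}n_c\}$) gives $\|\phi^{(i)}-\phi^{(j)}\|\gtrsim\frac{\theta-\rho}{\theta}\frac{n_{\min}}{n_{\max}}$ uniformly over size profiles, reproducing the paper's $\frac{(\theta-\rho)^3}{\theta^2}\frac{n_{\min}^2}{n_{\max}^2}$.

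The gap in your writeup is that you do not carry out this uniform lower bound: you assert a worst-case configuration and propose to justify it by "monotonicity in $\delta=n_{\min}/n_{\max}$", which is neither proved nor obviously true as stated (the size profile has $K-1$ free ratios, not one), and it is exactly the step into which the paper puts its algebra. Since the bound can be obtained directly from your $\phi$-identity as above, the route is sound, but as written the hardest step is deferred. Two smaller inaccuracies: the bias correction in AWCD$_1$ is $\mathbbm 1(\mathcal Y_{ij}=1)(|\mathcal C_i|+|\mathcal C_j|-1)$, not $\mathbbm 1(i=j)(\cdots)$; and $\theta<\frac12$ does not "bound $n_{\max}/n_{\min}$" --- it keeps $1-\theta$ bounded away from zero so that the KL expansion is valid, while the implicit polynomial comparability of $n_{\max}$ and $n_{\min}$ is just a consequence of the displayed condition being compatible with $\theta-\rho\leq\theta$. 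Also, the $\theta^{5/6}$-type term comes from reconciling the Bernstein tolerance $\epsilon\gtrsim\sqrt{\theta\log n_{\min}/n_{\min}}$ for the neighborhood composition with the requirement $\epsilon/\theta\ll((\theta-\rho)/\theta)^3\,n_{\min}^2/n_{\max}^2$, rather than from cross-correlations among $S_{ii},S_{ij},S_{jj}$; your bookkeeping should be organized around that $\epsilon$ exactly as in the proof of Theorem \ref{thm_original_AWCD}.
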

\begin{rmk}
The lower bound no longer simplifies to a single term in the case \(\frac{\theta}{\rho}\equiv C\) but rather to 
\[\theta \gg \max\left\{\left(\frac{n_{\max}}{n_{\min}}\right)^\frac{4}{3}n_{\min}^{-\frac{2}{3}} (\log n_{\min})^\frac{1}{3}, \left(\frac{n_{\max}}{n_{\min}}\right)^4  n_{\min}^{-1} \log n_{\min}\right\}.\]
Only for \(n_{\max}^{10} \lesssim n_{\min}^{11}\) does the condition further simplify to
\[\theta \gg \left(\frac{n_{\max}}{n_{\min}}\right)^\frac{4}{3}n_{\min}^{-\frac{2}{3}} \log n_{\min},\]
similarly to the rate of Theorem \ref{thm_original_AWCD}.
\end{rmk}

\begin{prop}\label{Proposition_different_block_size_and_parameter}
 We consider a stochastic block model with two blocks of block sizes \(n_1\) and \(n_2\) with parameters \(\theta_1, \theta_2 > \rho\) under the asymptotics \(n_{\min} \to \infty\). Then \(AWCD_1\) achieves consistent exact recovery after the first step as long as
 \[\theta_{\min} - \rho\gg \left(\frac{\theta_{\max}}{\theta_{\min}}\right)^\frac{7}{3}\left(\frac{n_{\max}}{n_{\min}}\right)^2\max\left\{n_{\min}^{-\frac{1}{3}} \theta_{\max}^\frac{1}{2}(\log n_{\min})^\frac{1}{6},   n_{\min}^{-\frac{1}{6}} \theta_{\max}^\frac{5}{6} (\log n_{\min})^\frac{1}{6}\right\}\] 
 and \(\theta_{\max}<\frac{1}{2}\)(or any constant smaller than 1).
\end{prop}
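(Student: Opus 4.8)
The plan is to mimic the proof of Theorem~\ref{thm_original_AWCD} in the column $\frac{\theta}{\rho}\leq C$ and of Proposition~\ref{Proposition_different_block_size}, carrying the two new sources of asymmetry — unequal block sizes $n_1,n_2$ and unequal within-block parameters $\theta_1,\theta_2$ — through every estimate. Restricting to two blocks keeps the bias bookkeeping manageable; throughout I condition on the fixed community assignment, write $N=n_1+n_2$, and treat all randomness as coming from the independent Bernoulli edges. Every bound will be made uniform over the $\binom{N}{2}$ pairs $(i,j)$ by a union bound, which is the source of the $\log n_{\min}$ factors.

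First I would control the local communities $\mathcal C_i=\{\,l:\mathcal Y_{il}=1\,\}$. For $i$ in block $a$ the size $|\mathcal C_i|$ is a sum of $N-1$ independent Bernoulli variables of mean $m_a\defined\theta_a(n_a-1)+\rho\,n_b$ with $b\neq a$, and the count of its block-$c$ members is again such a sum; Chernoff bounds give deviations of order $\sqrt{m_a\log N}$ simultaneously over all $i$ and $c$. The \emph{useful} part of $\mathcal C_i$ — its block-$a$ members — has size $\asymp\theta_a n_a$, whereas $|\mathcal C_i|\asymp\theta_{\max}n_{\max}$, so the effective sample size available to $T_{ij}$ is degraded by a factor $\asymp\theta_{\min}n_{\min}/(\theta_{\max}n_{\max})$; this is what forces the block- and parameter-ratio prefactors into the claimed rate.

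Next I would compute the conditional law of the estimators. After the debiasing subtraction, $S_{ij}$ is, conditionally on $\mathcal C_i,\mathcal C_j$, a sum of $N_{ij}$ independent Bernoulli variables with means $\theta_1,\theta_2$ or $\rho$ according to the blocks of the endpoints (the diagonal/overlap terms $\mathcal C_i\cap\mathcal C_j$ being treated separately), so $\mathbb E[\widetilde\theta_{ij}\mid\mathcal C_i,\mathcal C_j]$ is an explicit convex combination of $\theta_1,\theta_2,\rho$ with weights read off from Step~1. The heart of the proof is the bias computation: when $i,j$ are in the same block $a$ one shows that $\widetilde\theta_{ii},\widetilde\theta_{jj},\widetilde\theta_{ij},\widetilde\theta_{i\lor j}$ all concentrate around essentially one value, the residual bias being of order $\rho$ times the contamination fraction $\asymp\rho n_b/(\theta_a n_a)$ plus the $\mathcal C_i\cap\mathcal C_j$ overlap correction, so that $T_{ij}\lesssim\log N$; when $i,j$ are in different blocks, $\widetilde\theta_{i\lor j}$ under $H_0$ is pinned strictly between the within-block densities ($\approx\theta_a$, $\approx\theta_b$) and the between-block density ($\approx\rho$), yielding a separation of order $\theta_{\min}-\rho$ up to the ratio prefactors, hence $T_{ij}$ of order $N_{ij}(\theta_{\min}-\rho)^2/\theta_{\max}$. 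Expanding the Kullback--Leibler terms through $\mathcal K(p,q)\asymp(p-q)^2/(q(1-q))$, with the appropriate one-sided estimates in the sparse regime, and combining with a Bernstein bound for $S_{ij}$ about its conditional mean gives, with probability at least $1-N^{-3}$ for each pair, a clean gap: $T_{ij}$ small when $W^*_{ij}=1$ and $T_{ij}\gg\log N$ when $W^*_{ij}=0$, precisely when $\theta_{\min}-\rho$ exceeds the stated bound. Picking $\lambda$ between the two thresholds and union bounding over all pairs gives $\mathbb P(\forall i,j:W_{ij}=W^*_{ij})\to1$.

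The main obstacle is the same-block bias analysis once $\theta_1\neq\theta_2$: the pooled MLE $\widetilde\theta_{i\lor j}$ is no longer symmetric, so the three KL terms in $T_{ij}$ do not cancel pairwise as cleanly as in the symmetric SBM, and one must show that the leftover bias — a mixture of the $\rho$-contamination from the other block, the $\mathcal C_i\cap\mathcal C_j$ overlap, and the size fluctuations of the local communities — is, after multiplication by $N_{ij}$, dominated by $N_{ij}(\theta_{\min}-\rho)^2$. Tracking the worst-case constants through these competing contributions is exactly what produces the factors $(\theta_{\max}/\theta_{\min})^{7/3}$ and $(n_{\max}/n_{\min})^2$; once this is in place, the deviation estimates and the final union bound are a routine adaptation of the symmetric argument.
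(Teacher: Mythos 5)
There is a genuine gap, and it sits exactly at what you call the ``heart of the proof.'' You assert that in the different-block case the estimators are approximately the true densities, so that $\widetilde\theta_{i\lor j}$ is pinned between $\approx\theta_a,\approx\theta_b$ and $\approx\rho$, giving a mean separation of order $\theta_{\min}-\rho$ and hence $T_{ij}\asymp N_{ij}(\theta_{\min}-\rho)^2/\theta_{\max}$. This is not what happens with the AWCD$_1$ starting guess $\mathcal C_i=\{l:\mathcal Y_{il}=1\}$: because each local community is heavily contaminated by the other block (about $\rho n_b$ of its $\asymp\theta_a n_a+\rho n_b$ members), the conditional means of $\widetilde\theta_{ii},\widetilde\theta_{jj},\widetilde\theta_{ij}$ are the mixtures $a_{11}/d_{11}$, $a_{22}/d_{22}$, $a_{12}/d_{12}$ with, e.g., $a_{11}=\theta_1^3n_1^2+2\theta_1\rho^2n_1n_2+\theta_2\rho^2n_2^2$ and $d_{11}=(\theta_1n_1+\rho n_2)^2$, and their separation is only of order $\left(\tfrac{\theta_{\min}}{\theta_{\max}}\right)^2\left(\tfrac{n_{\min}}{n_{\max}}\right)^2\tfrac{(\theta_{\max}-\rho)(\theta_{\min}-\rho)^2}{\theta_{\max}^2}$ --- cubic in the gap, not linear. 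The contamination attenuates the signal itself, not merely the effective sample size as your second paragraph suggests; with your claimed linear separation the resulting sufficient condition would be far stronger than the stated one and would not reproduce the $(\theta_{\max}/\theta_{\min})^{7/3}(n_{\max}/n_{\min})^2$ prefactor or the $n_{\min}^{-1/3}\theta_{\max}^{1/2}(\log n_{\min})^{1/6}$ threshold, so the final step ``precisely when $\theta_{\min}-\rho$ exceeds the stated bound'' does not go through as written.

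Relatedly, you locate the asymmetry difficulty in the same-block (null) case, but there all three estimators share the same conditional mean ($a_{11}/d_{11}$ when $i,j\in\mathcal C_1^*$), so the null analysis is essentially the symmetric one and yields $\mathcal O(\delta\theta_{\max})$ as in the paper. The real asymmetric difficulty is in the alternative: with $\theta_1\neq\theta_2$ one cannot lower-bound the single difference $\mathbb E[\widetilde\theta_{i\lor j}-\widetilde\theta_{ij}]$ directly, since an individual quantity like $\tfrac{a_{11}}{d_{11}}-\tfrac{a_{12}}{d_{12}}$ need not be positive for a given ordering of $(i,j)$. The paper's proof handles this by an algebraic device your proposal has no substitute for: it shows that the symmetrized sum of differences has remainder $R_{ij}+R_{ji}=\rho n_1n_2(\theta_1-\theta_2)^2\geq0$, so after possibly swapping $i$ and $j$ the combined difference $\bigl(\tfrac{a_{11}}{d_{11}}-\tfrac{a_{22}}{d_{22}}\bigr)+\bigl(\tfrac{a_{11}}{d_{11}}-\tfrac{a_{12}}{d_{12}}\bigr)$ is bounded below by the cubic main term, and then it uses that $\widetilde\theta_{i\lor j}$ is a weighted mean of $\widetilde\theta_{ii},\widetilde\theta_{jj},\widetilde\theta_{ij}$ (with the weights $d_{11},d_{22},d_{12}$ comparable up to $(\theta_{\max}/\theta_{\min})^2(n_{\max}/n_{\min})^2$) to conclude that at least one of the three KL terms in $T_{ij}$ is large --- which suffices for the test. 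Without (i) the correct cubic-order mean separation and (ii) this ``at least one term is large'' argument, the core of the proposal fails; the concentration, Fisher-information and union-bound steps you describe are fine and match the paper, but they are the routine part.
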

\begin{rmk}
Note that we do in general allow for \(\frac{n_{\max}}{n_{\min}} \to \infty\) or \(\frac{\theta_{\max}}{\theta_{\min}}\to\infty\). If  \(\frac{n_{\max}}{n_{\min}} \) and \(\frac{\theta_{\max}}{\theta_{\min}}\) are bounded by a constant \(M\), then the rates in Proposition \ref{Proposition_different_block_size} and \ref{Proposition_different_block_size_and_parameter} are exactly the same as in Theorem \ref{thm_original_AWCD}. However, the respective implicit constant factor in the lower bound depends on \(M\).
\end{rmk}

\section{Experiments}\label{sec_experiments}

\begin{figure}
\begin{minipage}{0.32\linewidth}
   \includegraphics[width = \linewidth]{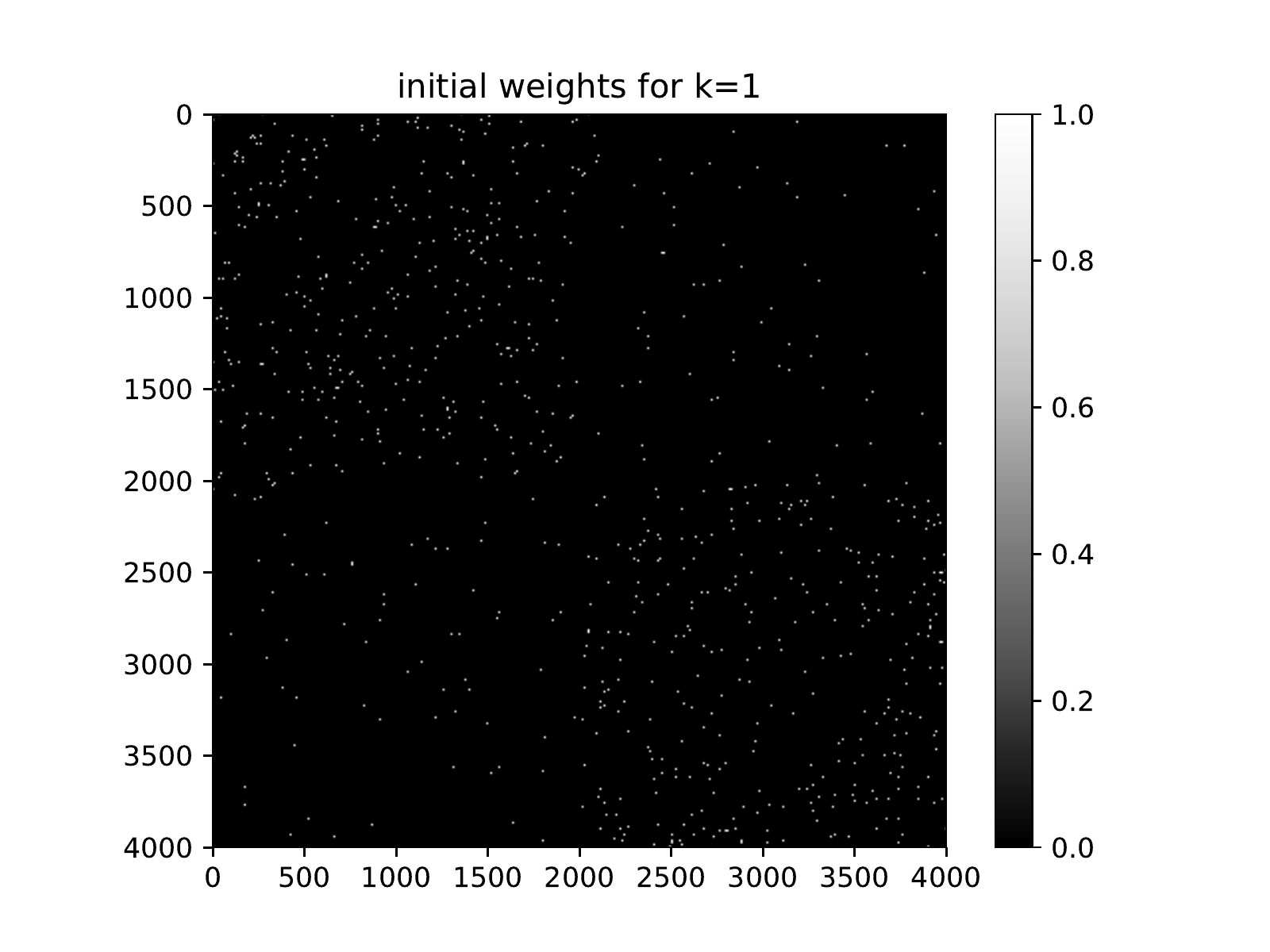}
\end{minipage}
\begin{minipage}{0.32\linewidth}
   \includegraphics[width = \linewidth]{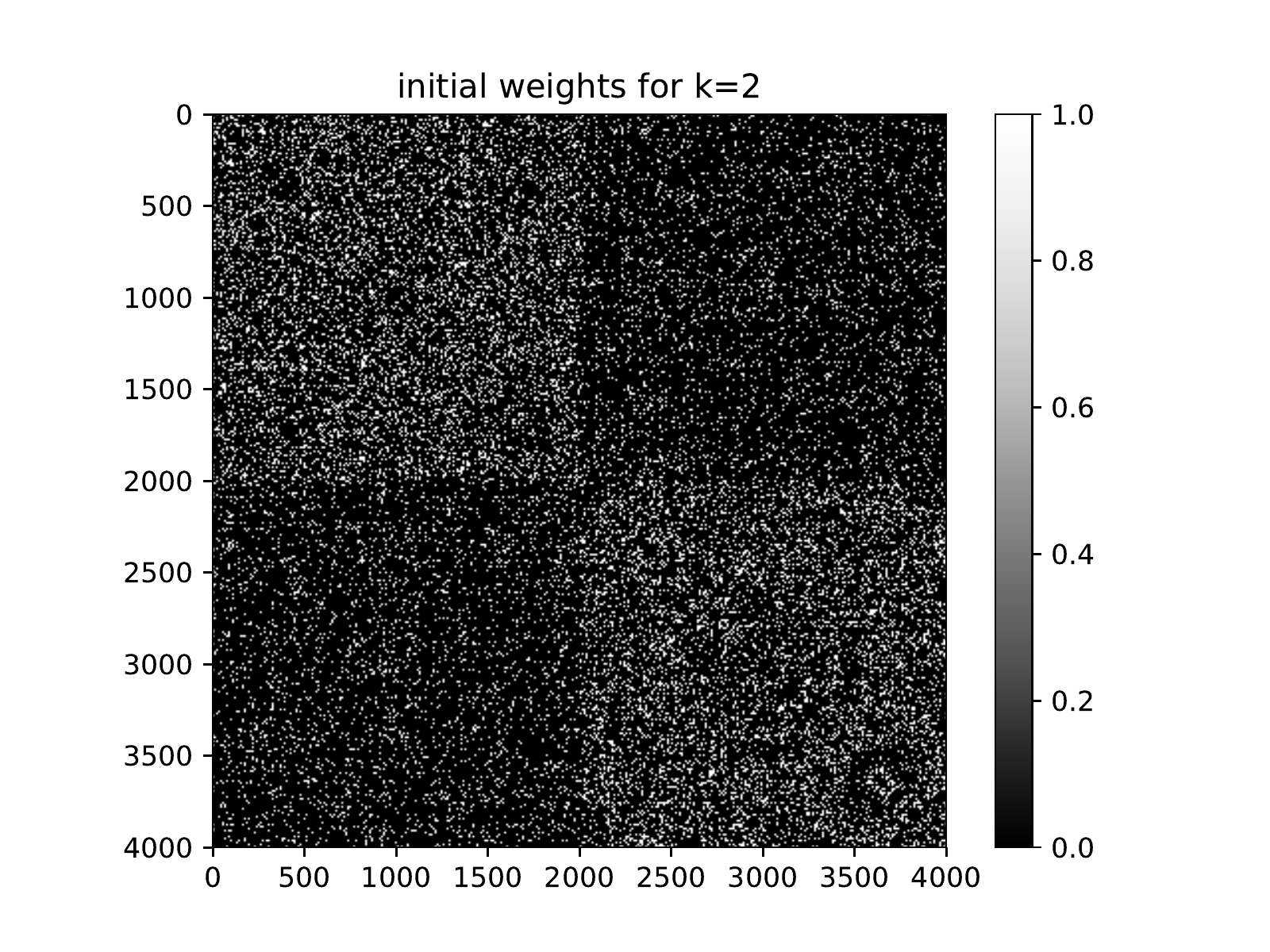}
\end{minipage}
\begin{minipage}{0.32\linewidth}
   \includegraphics[width = \linewidth]{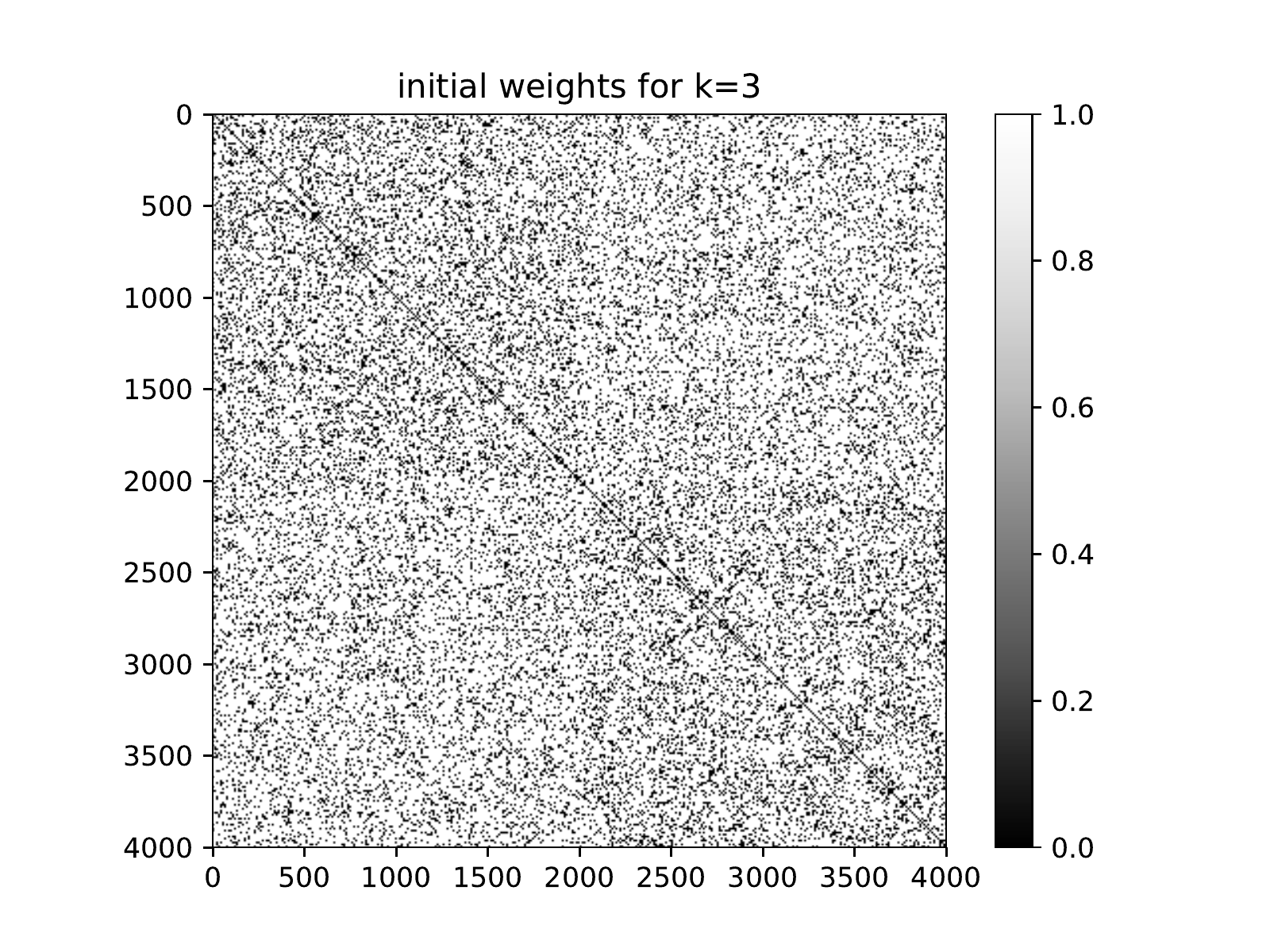}
\end{minipage} \\
\begin{minipage}{0.32\linewidth}
   \includegraphics[width = \linewidth]{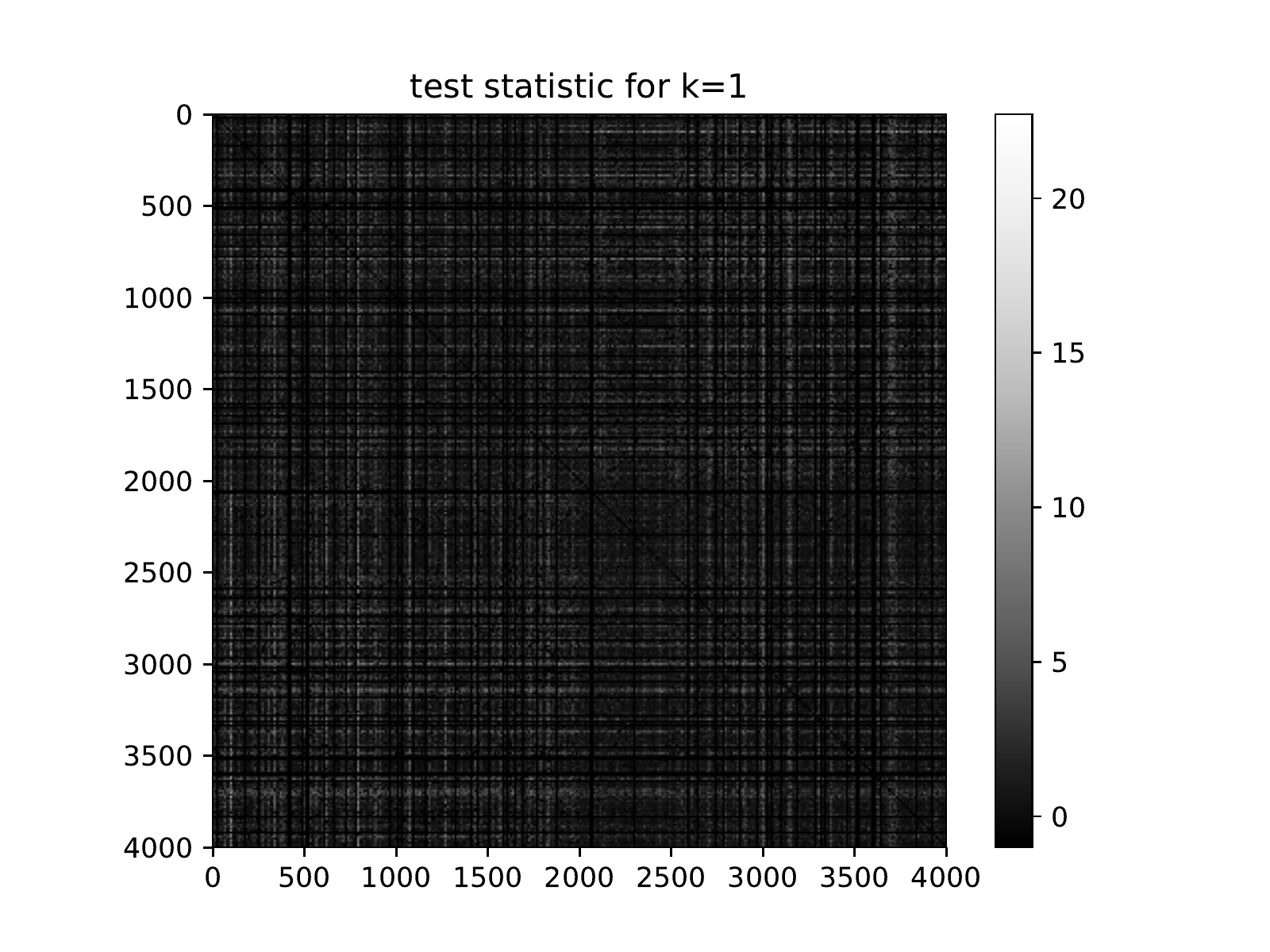}
\end{minipage}
\begin{minipage}{0.32\linewidth}
   \includegraphics[width = \linewidth]{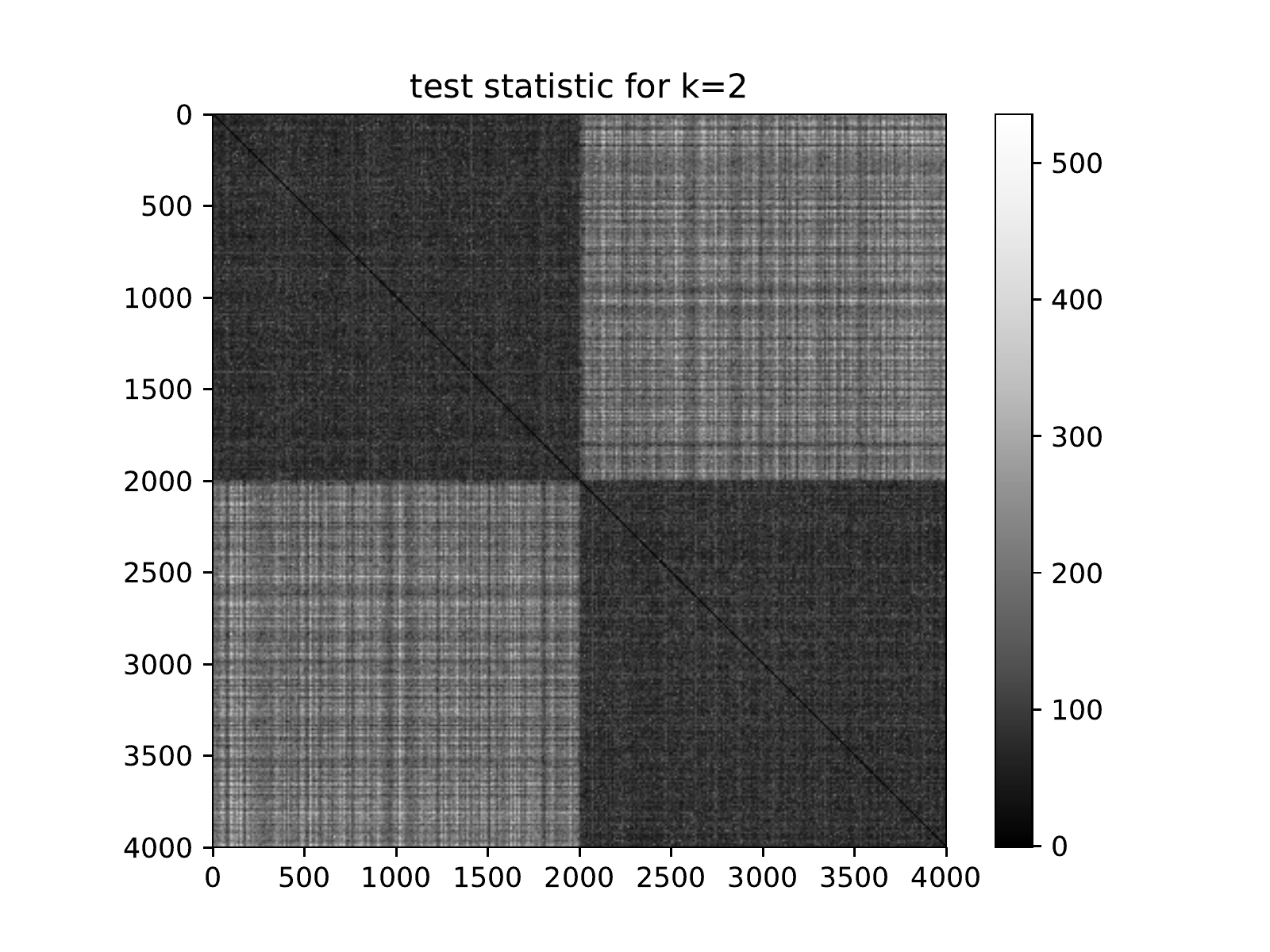}
\end{minipage}
\begin{minipage}{0.32\linewidth}
   \includegraphics[width = \linewidth]{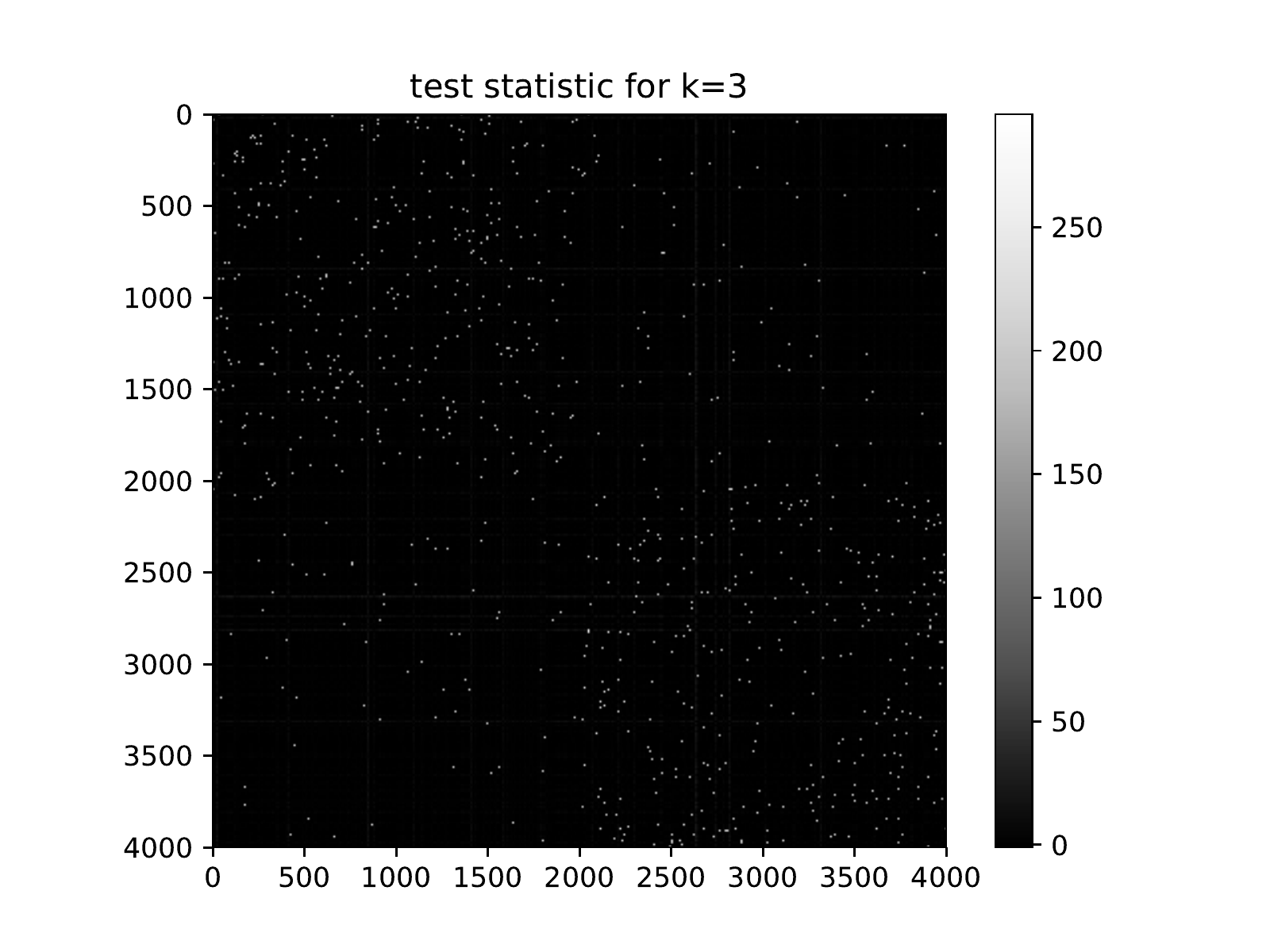}
\end{minipage}
\caption{We consider a realization of SBM(2000, 2, 0.01, 0.0025). Top: Adjacency matrix of the starting guess $W^{(0)}$ of the community structure from AWCD$_k$  for $k=1, 2, 3$. Bottom: The corresponding test statistics.}
\label{fig_matrices}
\end{figure}

\cite{AWCD} already demonstrated a state-of-the-art performance of the original algorithm on the LFR benchmark \cite{benchmark_sbm1} for a total sample size of \(n=1000\) as well as on smaller real-life examples such as the famous Zachary's karate club network \cite{karate_club}. Moreover, it has been shown that optimizing modularity is a reasonable method to choose the tuning parameter $\lambda$. In this section, we focus on validating our theoretical results and consider only the simple symmetric stochastic block model SBM($n, 2, \theta, \rho$) consisting of two communities of identical size \(n\) and having two Bernoulli parameters $\theta > \rho$. We want to study two questions:\begin{enumerate}
                                                                                                                                                                                                                                                                                                                                                                                                                                                                                                                 \item How does the performance of the procedure depend on the starting neighborhood via the parameter \(k\)?
                                                                                                                                                                                                                                                                                                                                                                                                                                                                                                                 \item Can we observe the stated rates of consistency?
                                                                                                                                                                                                                                                                                                                                                                                                                                                                                                                                                                                                                                                                                                                                                                                \end{enumerate}

\begin{figure}
\begin{minipage}{0.32\linewidth}
   \includegraphics[width = \linewidth]{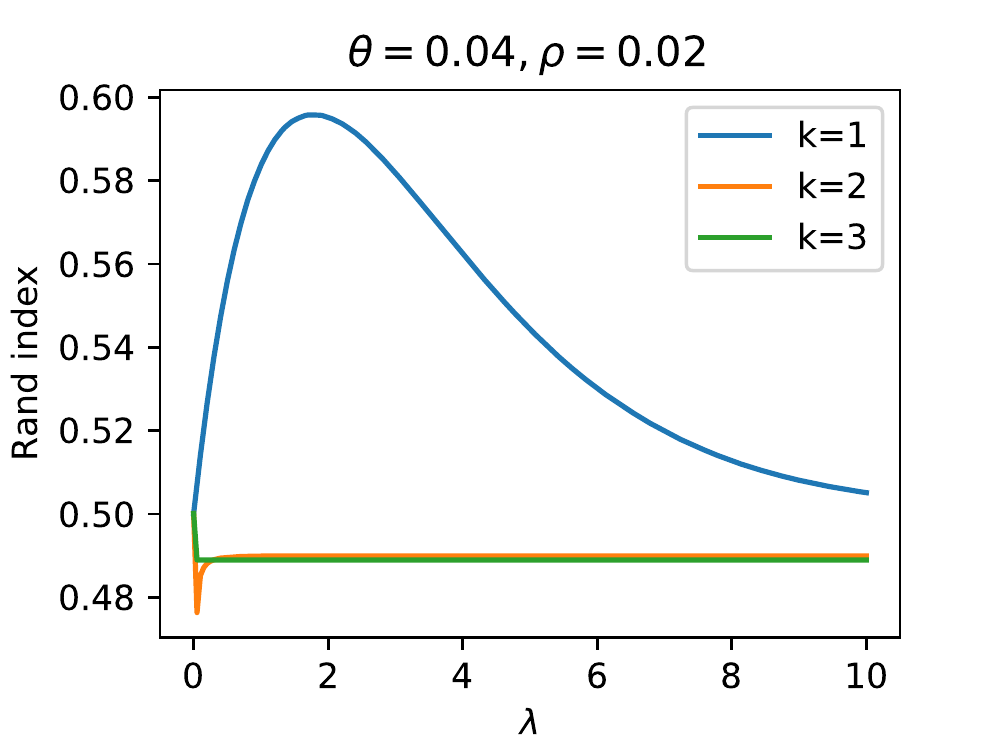}
\end{minipage}
\begin{minipage}{0.32\linewidth}
   \includegraphics[width = \linewidth]{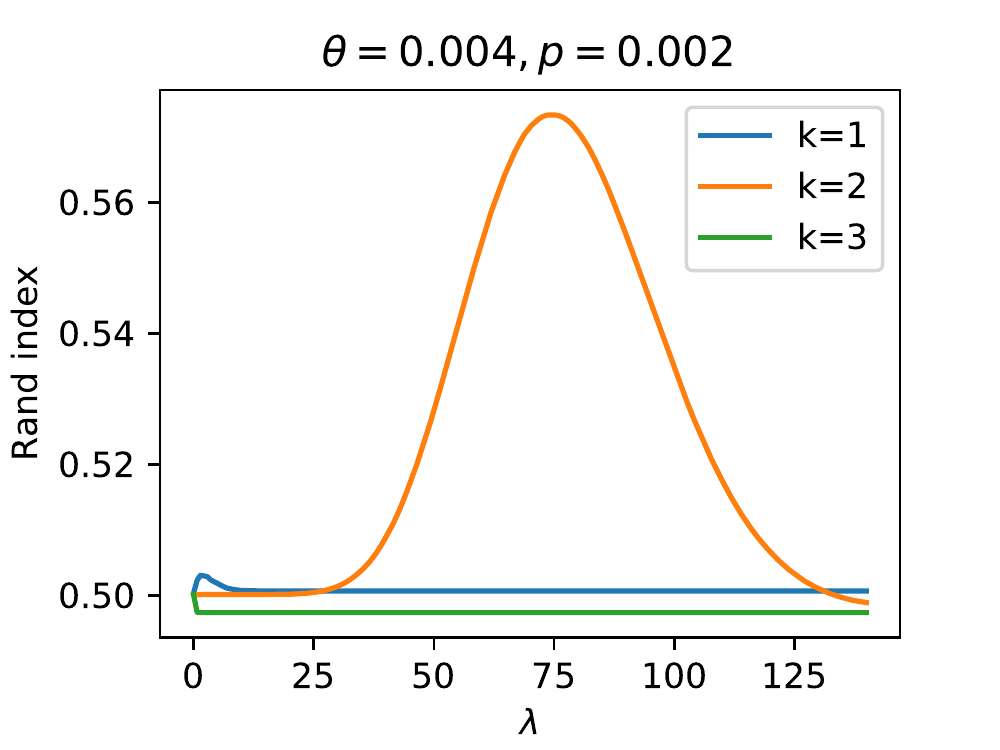}
\end{minipage}
\begin{minipage}{0.32\linewidth}
   \includegraphics[width = \linewidth]{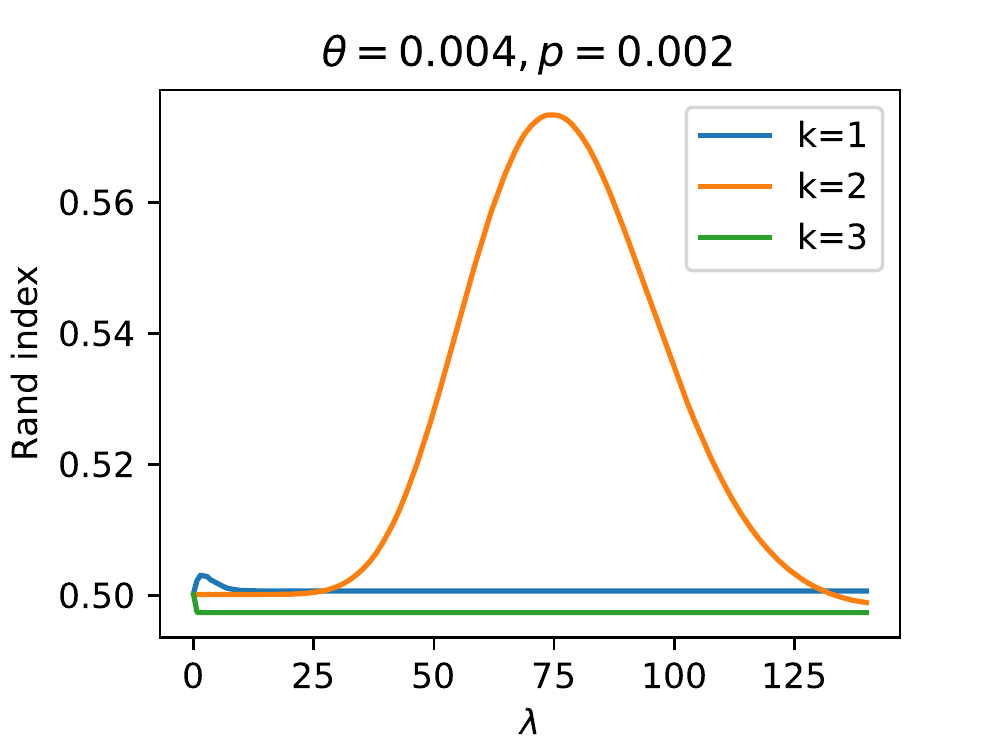}
\end{minipage} \\
\begin{minipage}{0.32\linewidth}
   \includegraphics[width = \linewidth]{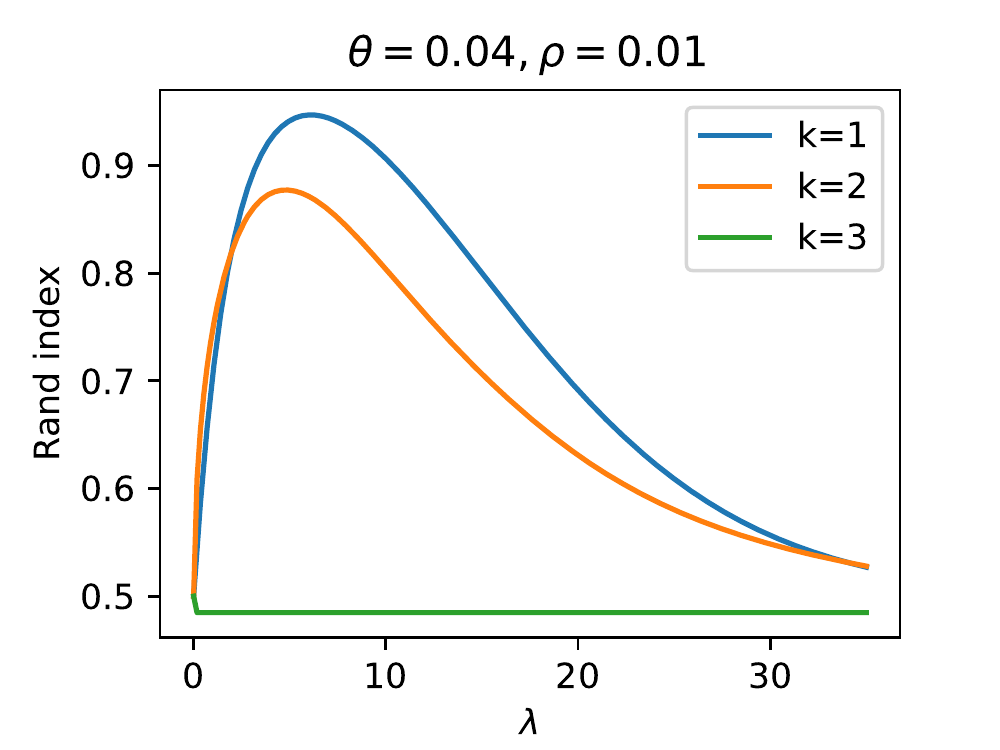}
\end{minipage}
\begin{minipage}{0.32\linewidth}
   \includegraphics[width = \linewidth]{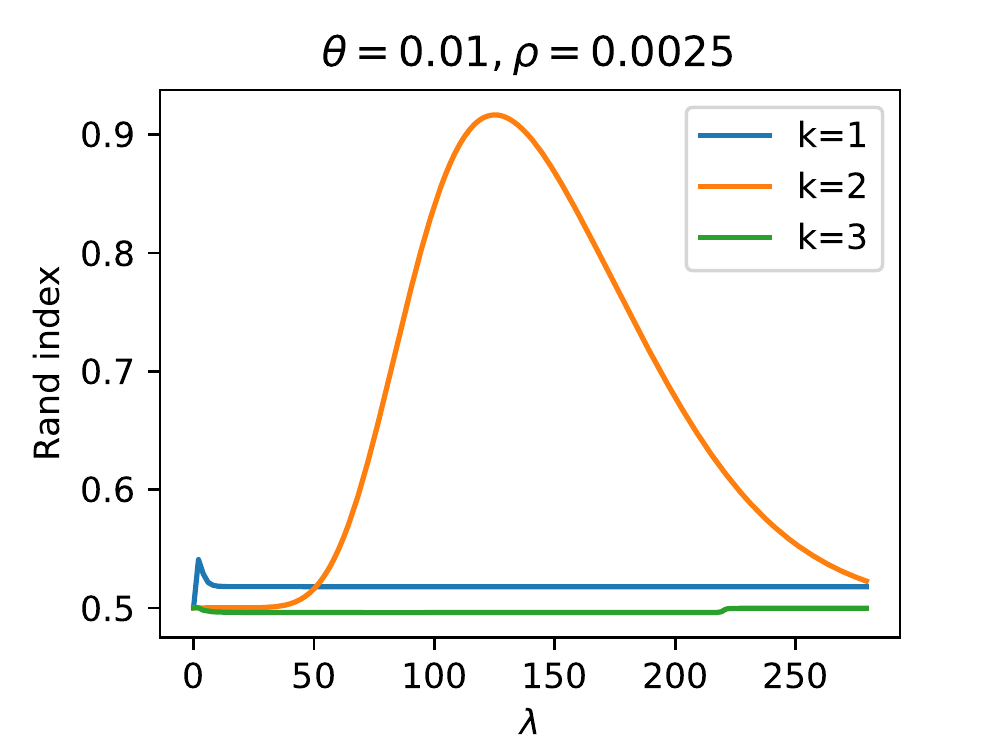}
\end{minipage}
\begin{minipage}{0.32\linewidth}
   \includegraphics[width = \linewidth]{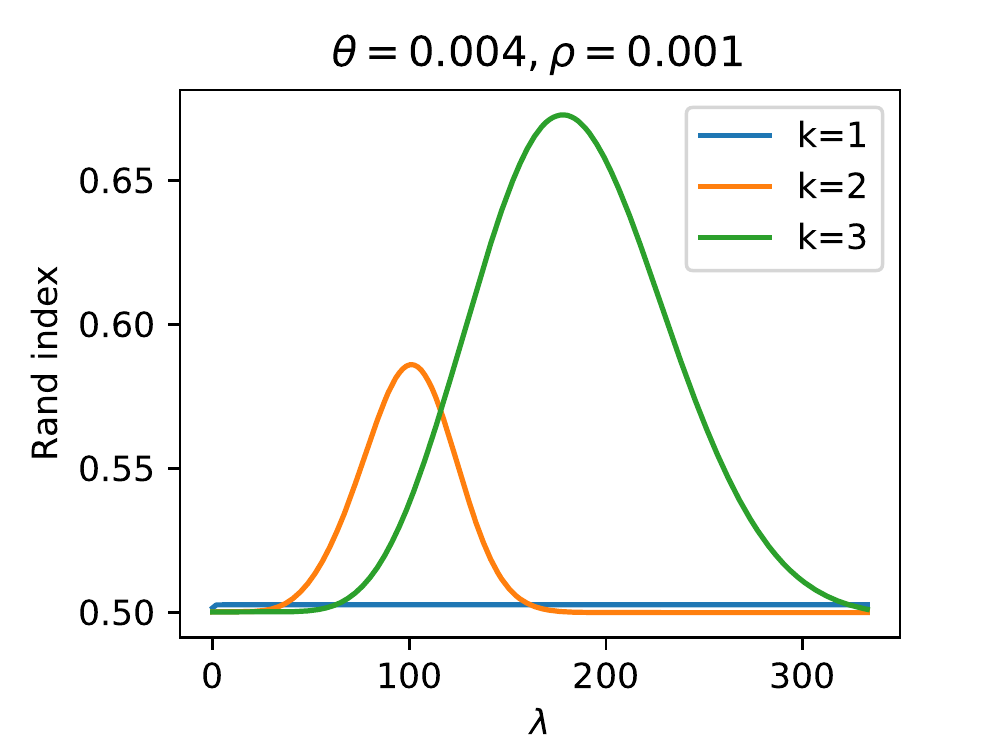}
\end{minipage}
\caption{Rand index achieved by AWCD$_k$ on SBM($2000, 2, \theta, \rho$) for different algorithm parameters $(k, \lambda)$ and for several Bernoulli parameter combinations $(\theta, \rho)$}
\label{fig_performance_different_k}
\end{figure}

We do not study the effect of the bias correction term. This aspect of the algorithm is important for strong consistency. However, in terms of the accuracy of the final weight matrix, also known as \emph{Rand index} \cite{rand}, the effect is rather small: If the matrix is relatively dense, then the bias term is also relatively small, whereas the bias correction only applies to a relatively small fraction of the test statistics $T_{ij}$, if the matrix is sparse.

We start by studying one realization of the SBM as described above with the parameters $n=2000$, $\theta = 0.01$ and $\rho = 0.0025$. In Figure \ref{fig_matrices} we see the original adjacency matrix, the corresponding 2- and 3-neighborhood adjacency matrix as well as the test statistics of the algorithm AWCD$_k$ for $k=1, 2, 3$. We can see that the test statistic is only informative for $k=2$: For $k=1$, each sum $S_{ij}$ consists only of a few summands, so the test statistic cannot be reliable, whereas, for $k=3$, each starting neighborhood contains almost all members of the network. This is confirmed by a plot of the Rand index of the final weight matrices corresponding to these test matrices and different tuning parameters \(\lambda\) in Figure \ref{fig_performance_different_k} (bottom, middle): Only for $k=2$ does the algorithm correctly identify the community structure. \cite{AWCD} have demonstrated in the case $k=1$ that multiple iterations of the algorithm can improve the final weight matrix further if enough information on the community structure is recovered after the first step. However, in this example, we do not benefit from these additional iterations, as the data is too sparse and the weight matrix after the first step is not informative enough. This is demonstrated in Figure \ref{fig_many_rounds}.

We repeated the above experiment for several other parameter combinations $(\theta, \rho)$ and plotted the resulting Rand indices in Figure \ref{fig_performance_different_k}. We can see that the performance of the algorithm is stable with respect to the tuning parameter $\lambda$. Moreover, for any parameter combination of the SBM, we find parameters $k$ and $\lambda$ which recover the community structure with a non-negligible accuracy. As expected by Theorem \ref{thm_consistency_AWCD_general_k}, increasing the sparsity of the data significantly enough requires also increasing the parameter $k$. Note that the overall sparsity depends on both parameters $\theta$ and $\rho$: In the case of $\theta = 0.004$ we see on the right-hand side of Figure \ref{fig_performance_different_k} that the optimal parameter $k$ depends on $\rho$. Unsurprisingly, we also observe that for a given $\theta$, an increase of the quotient $\frac{\theta}{\rho}$ allows a more accurate recovery of the community structure.

\begin{figure}
\begin{center}
    \includegraphics[scale = 0.55]{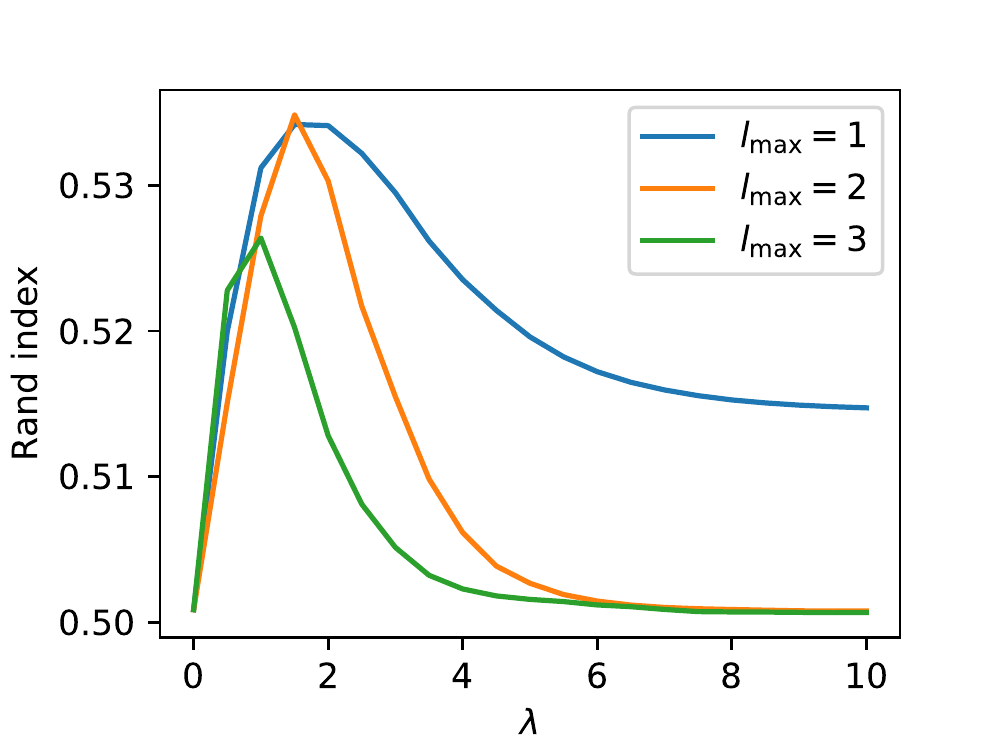}
    \caption{Rand index achieved after the first three iterations of algorithm \ref{algorithm} for a realization of SBM($2000, 2, 0.01, 0.0025$)}
\label{fig_many_rounds}
\end{center}
\end{figure}

Lastly, we discuss the second question raised above: Can we observe the stated rates of consistency in numerical experiments? To minimize the computation time, we will focus on the case $k=1$. We have repeated the above experiment for many different parameter combinations $(n, \theta)$ with a fixed quotient $\frac{\theta}{\rho}\equiv 4$. For each parameter combination, we computed the maximum Rand index that can be achieved by optimizing the tuning parameter $\lambda$. We repeated this experiment ten times for each parameter combination with \(n\leq 2000\) and averaged the resulting maximum Rand index. The results are shown in Figure \ref{fig_experiment_rate}. According to Theorem \ref{thm_original_AWCD}, AWCD$_1$ achieves strong consistency at a rate of $n^{-\frac{2}{3}}$ up to logarithmic factors. Indeed, it seems that the function yielding the minimum $\theta$ necessary to achieve a certain level of accuracy for a given community size $n$ can be reasonably well approximated by a function of the form $Cn^{-\frac{2}{3}}$.

\begin{figure}
\begin{minipage}{0.49\linewidth}
   \includegraphics[width = \linewidth]{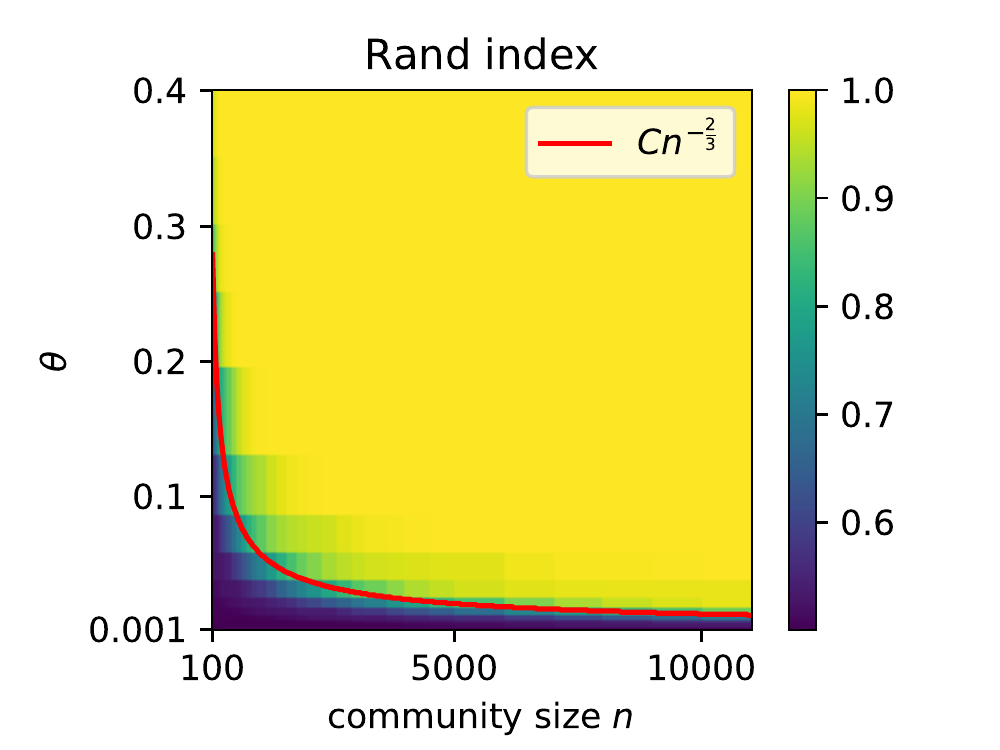}
\end{minipage}
\begin{minipage}{0.49\linewidth}
   \includegraphics[width = \linewidth]{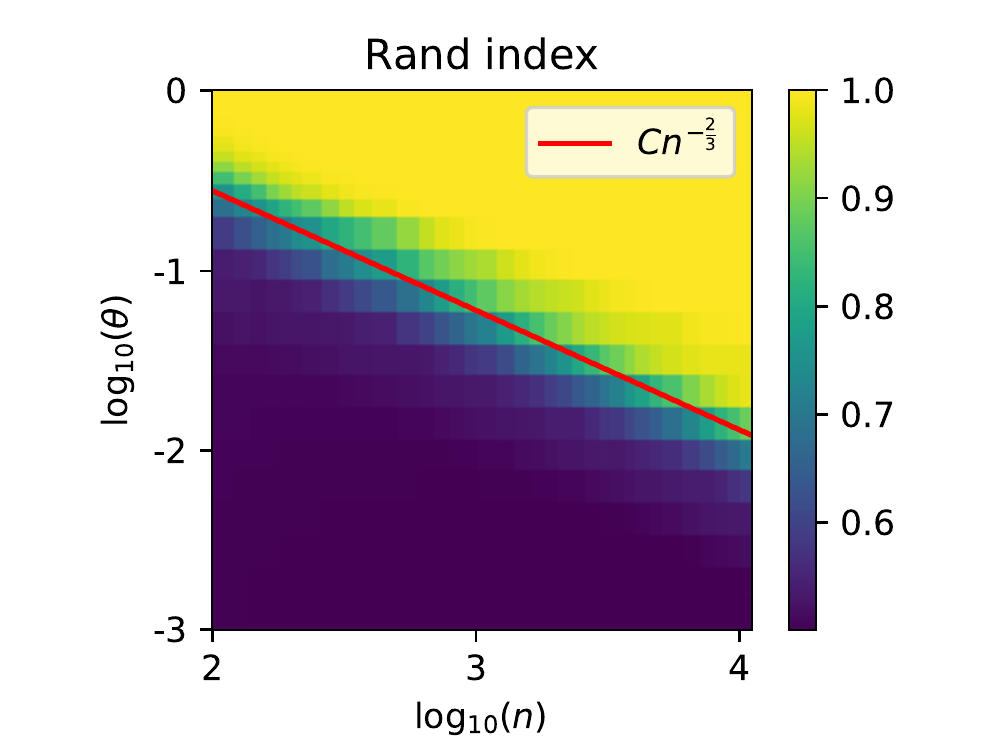}
\end{minipage}
\caption{Two plots visualize the Rand index achieved by AWCD$_1$ for different SBM parameters $(n, \theta)$ with $\frac{\theta}{\rho}\equiv 4$ using the optimal tuning parameter $\lambda$. The right plot only differs in the logarithmic scale on both axes.}
\label{fig_experiment_rate}
\end{figure}


\section{Proofs}\label{sec_proofs}
\begin{lem}\label{lem_bernstein_homogeneous}
 For \(X\sim \text{Binom}(m, q)\) and \(a>0\) we have
\begin{align*}
 \mathbb P\left(|X-\mathbb E X| \geq am \right) \leq 2 \exp\left(-\frac{a^2m}{2\left(q + \frac{a}{3}\right)}\right)\text{.}
\end{align*}
\end{lem}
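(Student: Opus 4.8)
The plan is to recognize Lemma \ref{lem_bernstein_homogeneous} as the classical Bernstein inequality for bounded independent summands and to prove it by the exponential moment (Chernoff) method, treating the upper and lower tails separately. I would write $X = \sum_{i=1}^m Y_i$ with $Y_i$ i.i.d.\ $\mathrm{Bernoulli}(q)$, so that $\mathbb E X = mq$; the degenerate case $q=0$ is trivial, so I assume $q>0$. Splitting $\{|X-mq|\ge am\}$ into the two one-sided events reduces the claim to showing that each of $\mathbb P(X-mq\ge am)$ and $\mathbb P(X-mq\le -am)$ is at most $\exp\bigl(-\tfrac{a^2m}{2(q+a/3)}\bigr)$; summing the two bounds produces the factor $2$.

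For the upper tail I would fix $t\in(0,3)$ and apply Markov's inequality to $e^{t(X-mq)}$. Independence factorizes the moment generating function, and the elementary estimate $1+x\le e^x$ gives
\[
\mathbb E\,e^{t(Y_i-q)} = e^{-tq}\bigl(1+q(e^t-1)\bigr)\le \exp\bigl(q(e^t-1-t)\bigr),
\]
hence $\mathbb E\,e^{t(X-mq)}\le \exp\bigl(mq(e^t-1-t)\bigr)$ and
\[
\mathbb P(X-mq\ge am)\le \exp\bigl(-tam+mq(e^t-1-t)\bigr).
\]
Next I would use the series bound $e^t-1-t\le \tfrac{t^2/2}{1-t/3}$, valid for $0\le t<3$ since $k!\ge 2\cdot 3^{k-2}$ for $k\ge 2$, and then make the convenient choice $t = a/(q+a/3)$, which lies in $(0,3)$ because $q>0$ and satisfies $1-t/3 = q/(q+a/3)$. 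A short computation then collapses the exponent exactly to $-\tfrac{a^2m}{2(q+a/3)}$.

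The lower tail is entirely analogous: replacing $t$ by $-t$ yields $\mathbb E\,e^{-t(X-mq)}\le \exp\bigl(mq(e^{-t}-1+t)\bigr)$, and since $e^{-t}-1+t\le t^2/2\le \tfrac{t^2/2}{1-t/3}$ on $[0,3)$, the same choice of $t$ gives the same bound. The whole argument is routine; the only thing demanding any care is the bookkeeping in the optimization step — verifying that $t=a/(q+a/3)\in(0,3)$ and that it produces precisely the advertised constant. I would avoid the alternative route of optimizing $t$ exactly (which leads to the Poisson-type bound $\exp\bigl(-mq\,h(a/q)\bigr)$ with $h(u)=(1+u)\log(1+u)-u$ and then requires the inequality $h(u)\ge u^2/(2(1+u/3))$), since that merely relocates the effort onto a slightly less transparent calculus inequality.
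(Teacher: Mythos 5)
Your argument is correct: the MGF bound $\mathbb E\,e^{t(Y_i-q)}\le \exp(q(e^t-1-t))$, the series estimate $e^t-1-t\le \tfrac{t^2/2}{1-t/3}$ for $0\le t<3$ (via $k!\ge 2\cdot 3^{k-2}$), and the choice $t=a/(q+a/3)$ do collapse the exponent exactly to $-\tfrac{a^2m}{2(q+a/3)}$, the lower tail goes through as you say, and the degenerate case $q=0$ is handled. The difference from the paper is one of route rather than substance: the paper does not prove anything here, it simply observes that the statement is a special case of Bernstein's inequality for bounded random variables and cites the standard reference (variance proxy $mq$ since $\mathrm{Var}(Y_i)=q(1-q)\le q$, and $|Y_i-\mathbb E Y_i|\le 1$), whereas you reprove that inequality from scratch for Bernoulli sums by the Chernoff method. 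The citation buys brevity and makes clear that nothing special about the binomial case is being used; your self-contained derivation buys transparency about where the constants $2$ and $a/3$ come from and verifies that specializing to Bernoulli summands yields exactly the constants stated in the lemma, at the cost of a page of routine calculation that the paper deliberately avoids.
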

\begin{proof}
 This is a special case of Bernstein's inequality for bounded variables \cite[Theorem 2.8.4]{bernstein_ineq}.
\end{proof}

 \begin{lem}\label{lem_bernstein}
Suppose \(X_i\) are independent Bernoulli variables of mean \(\theta_i \leq \theta\) and \(a>0\). Then for \(c_i\in\{1, 2\}\) and \(m\defined \sum_{i=1}^N c_i\) we have
\begin{align*}
 \mathbb P\left(\left|\sum_{i=1}^N c_i X_i  -\mathbb E c_ i X_i\right| \geq am \right) \leq 2 \exp\left(-\frac{a^2m}{6\theta + \frac{4a}{3}}\right)\text{.}
 \end{align*}
 \end{lem}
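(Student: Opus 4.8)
The plan is to reduce this to the classical Bernstein inequality for bounded independent variables — the same \cite[Theorem 2.8.4]{bernstein_ineq} used for Lemma \ref{lem_bernstein_homogeneous} — with the weights $c_i$ absorbed into the boundedness and variance bookkeeping. First I would center: set $Y_i \defined c_i(X_i - \theta_i)$, so the $Y_i$ are independent, mean zero, and bounded, $|Y_i| \leq c_i \leq 2$ almost surely. For the variance proxy, $\mathbb E Y_i^2 = c_i^2\,\theta_i(1-\theta_i) \leq c_i^2 \theta$, and since $c_i \in \{1,2\}$ we have the elementary inequality $c_i^2 \leq 2 c_i$; summing gives $\sigma^2 \defined \sum_{i=1}^N \mathbb E Y_i^2 \leq 2\theta \sum_{i=1}^N c_i = 2\theta m$. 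This is the one place where the structure $c_i \in \{1,2\}$ is genuinely used: it lets the variance be controlled by $m = \sum_i c_i$ rather than by $\sum_i c_i^2$.

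Next I would run the standard Chernoff/exponential-moment argument. Using $k! \geq 2\cdot 3^{k-2}$ one obtains, for $0 < \lambda < 3/2$, the bound $\mathbb E e^{\lambda Y_i} \leq \exp\big(\tfrac{\lambda^2 \mathbb E Y_i^2}{2(1 - 2\lambda/3)}\big)$, hence by independence $\mathbb E \exp\big(\lambda \sum_i Y_i\big) \leq \exp\big(\tfrac{\lambda^2 \sigma^2}{2(1 - 2\lambda/3)}\big)$. Markov's inequality together with the choice $\lambda = am/(\sigma^2 + \tfrac{2}{3}am)$ then yields
\[
\mathbb P\Big(\sum_{i=1}^N Y_i \geq am\Big) \leq \exp\left(-\frac{(am)^2}{2\big(\sigma^2 + \tfrac{2}{3}am\big)}\right) \leq \exp\left(-\frac{a^2 m}{4\theta + \tfrac{4}{3}a}\right),
\]
where the last step inserts $\sigma^2 \leq 2\theta m$. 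Applying the same argument to $-Y_i$ and taking a union bound produces the factor $2$, and since $4\theta + \tfrac{4}{3}a \leq 6\theta + \tfrac{4}{3}a$ the exponent only decreases, giving exactly the claimed inequality.

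There is no real obstacle here: the statement is a routine weighted Bernstein bound, and the argument in fact delivers the slightly sharper denominator $4\theta + \tfrac{4}{3}a$, so there is slack. The only points needing a little care are the elementary observation $c_i^2 \leq 2c_i$ and keeping track of the numerical constants through the Chernoff optimization. An alternative would be to stratify the index set by the value of $c_i$ and by the value of $\theta_i$ and apply Lemma \ref{lem_bernstein_homogeneous} blockwise, but this is more cumbersome and the direct moment-generating-function computation avoids the homogeneity assumption of that lemma.
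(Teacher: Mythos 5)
Your proposal is correct and is essentially the paper's own proof: the paper simply invokes the classical Bernstein inequality for bounded variables \cite[Theorem 2.8.4]{bernstein_ineq}, and your argument is exactly that instantiation (centering, $|c_i(X_i-\theta_i)|\leq 2$, and the bookkeeping $c_i^2\leq 2c_i$ giving $\sigma^2\leq 2\theta m$), with the standard MGF derivation written out rather than cited. The only difference is cosmetic: you even obtain the slightly sharper denominator $4\theta+\tfrac{4}{3}a$, which trivially implies the stated bound with $6\theta+\tfrac{4}{3}a$.
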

 \begin{proof}
This is a special case of Bernstein's inequality for bounded variables \cite[Theorem 2.8.4]{bernstein_ineq}.
 \end{proof}

\begin{proof}[Proof of Theorem \ref{thm_original_AWCD}]
Let us first consider the AWC\(^D\) algorithm. 
Suppose \(\epsilon > 0\) and \(\alpha\in\{1, 2, \dots, K\}\). We will denote the corresponding true community by \(\mathcal C_\alpha^*\). According to Lemma \ref{lem_bernstein_homogeneous} we have
\begin{align} |\mathcal C_i\cap \mathcal C^*_\alpha| = \theta_i^\alpha n + \mathcal O\left(\epsilon n + \theta \right)   \label{eqn_size_one_nbhd}\end{align}
for \begin{align}\theta_i^\alpha = \begin{cases}
                         \theta &, i \in \mathcal C^*_\alpha\\
                         \rho &, i\notin \mathcal C^*_\alpha
                        \end{cases}
\nonumber
\end{align}
on an event of probability at least
\[1 - 2\exp \left( - \frac{\epsilon^2 (n - 2)}{2\theta + \epsilon}\right)\text{.}\]
Similarly, we conclude for \(i\neq j\) 
\begin{align}
|\mathcal C_i \cap \mathcal C_j|= \mathcal O(\theta^2 n + \epsilon n) \label{eqn_size_overlap} 
\end{align}
on an event of the probability of at least the same lower bound. The above identities are still valid if we replace \(\mathcal C_i\) by \(\mathcal C_i^j\defined \mathcal C_i \setminus \{j\}\). In the following, we will restrict to an event where \eqref{eqn_size_one_nbhd} and \eqref{eqn_size_overlap} are satisfied for any \(i \neq j\). By union bound, this event is of asymptotic probability 1 as long as
\begin{align}
 \min \left\{ \frac{\epsilon^2 n}{\theta}, \epsilon n \right\} \gg \log n.
 \label{condition_large_probability}
\end{align}
Note that according to our assumptions \(\theta n\to \infty\). We conclude from \eqref{eqn_size_one_nbhd} and \eqref{eqn_size_overlap} for \(\epsilon < \theta\) 
\begin{align}
 \mathbb E [S_{ij}|\mathcal C_i^j, \mathcal C_j^i] 
 &= \theta \sum_\alpha |\mathcal C_i^j \cap \mathcal C_\alpha^*| |\mathcal C_j^i \cap \mathcal C_\alpha^*| + p \sum_{\alpha\neq \beta} |\mathcal C_i^j\cap \mathcal C_\alpha^*||\mathcal C_j^i\cap \mathcal C_\beta^*| + \mathcal O(\theta |\mathcal C_i\cap \mathcal C_j|\nonumber)\\
 &=\theta\sum_\alpha \theta_i^\alpha \theta_j^\alpha n^2 + \rho \sum_{\alpha\neq\beta} \theta_i^\alpha \theta_j^\beta n^2 +\mathcal O(\theta^3 n+ \theta^2 \epsilon n^2)\nonumber \\
 &= \begin{cases}
 a   +\mathcal O(\theta^3 n+ \theta^2 \epsilon n^2)& ,\exists \alpha: i, j\in \mathcal C_\alpha^*\\
    c +\mathcal O(\theta^3 n+ \theta^2 \epsilon n^2) & ,\text{otherwise}
    \end{cases}\label{conditional_expectation_S}
\end{align}
for 
\begin{align*}
 a &\defined  n^2[ \theta ^3 +3(K-1)\theta \rho^2 + (K-1)(K-2)\rho^3]\text{ and}\\
c &\defined n^2\left[ 3\theta^2 \rho + 3(K-2)\theta \rho^2 + ((K-1)(K-2) + 1)\rho^3\right].
\end{align*}
Note that we have to add the term \(\mathcal O(\theta |\mathcal C_i\cap \mathcal C_j|)\) because for any point \(l\) in the overlap, we have \(\mathcal Y_{il}\mathcal Y_{ll}\mathcal Y_{lk} = 0\). To simplify the notation, we will drop the superindex in \(\mathcal C_i^j\) from now on. Similarly, we have
\begin{align*}
 \mathbb E [S_{ii}|\mathcal \mathcal C_i]  &=\theta \sum_\alpha |\mathcal C_i \cap \mathcal C_\alpha^*| ^2+ p \sum_{\alpha\neq \beta} |\mathcal C_i\cap \mathcal C_\alpha^*||\mathcal C_i\cap \mathcal C_\beta^*| + \mathcal O(\theta |\mathcal C_i|\nonumber)\\
 &=\theta\sum_\alpha (\theta_i^\alpha)^2 n^2 + p \sum_{\alpha\neq\beta} \theta_i^\alpha \theta_j^\beta n^2 + \mathcal O(\theta^2 n + \theta^2 \epsilon n^2)\nonumber \\
 &= a + \mathcal O(\theta^2 n + \theta^2 \epsilon n^2)\nonumber
\end{align*}
as well as
\begin{align*}
\mathbb E[N_{ij}|\mathcal C_i, \mathcal C_j]
&= N_{ij}\nonumber \\
&= \left(\sum_\alpha |\mathcal C_i\cap \mathcal C_\alpha^*|\right)\left(\sum_\alpha |\mathcal C_j\cap \mathcal C_\alpha^*|\right)\nonumber\\
&=d + \mathcal O(\theta^2 n + \epsilon \theta n^2)
\end{align*}
for
\[d \defined n^2\left[\theta^2 + 2(K-1) \theta \rho  + (K-1)^2 \rho ^2\right]\]
and 
\begin{align*}
\mathbb E[N_{ii}|\mathcal C_i]
&=d + \mathcal O(\theta  n + \epsilon \theta n^2).
\end{align*}
For \begin{align}
    R\defined \theta n + \epsilon \theta n^2 \label{defn_R} 
    \end{align}
 we conclude
\begin{align*}
\mathbb E[ \widetilde\theta_{ii}|\mathcal C_i] &=  \frac{a +\mathcal O\left(\theta R \right)}{d +\mathcal O\left(R\right)}.
\end{align*}
For \begin{align}
  \delta\defined \max\left\{\frac{1}{\theta n}, \frac{\epsilon}{\theta} \right\}\label{defn_delta}   
    \end{align}
 under the assumption \(\delta \ll 1\) we can rewrite 
\begin{align*}
\mathbb E[\widetilde \theta_{ii}|\mathcal C_i]&=\frac{a}{d}\left(1+\mathcal O(\delta)\right)
\end{align*}
and analogously 
\begin{align*}
\mathbb E[\widetilde \theta_{ij}|\mathcal C_i, \mathcal C_j] &= \begin{cases}\frac{a}{d}\left(1+\mathcal O(\delta)\right)& ,\exists \alpha: i, j\in \mathcal C_\alpha^* \\
                    \frac{c}{d} \left(1+\mathcal O(\delta)\right)&  ,\text{otherwise}
                    \end{cases}\\
\text{and }\mathbb E[ \widetilde \theta_{i\lor j} | \mathcal C_i, \mathcal C_j] &=\begin{cases} \frac{a}{d}\left(1+\mathcal O(\delta)\right) & ,\exists \alpha: i, j\in \mathcal C_\alpha^*\\
   \frac{a + c}{2d} \left(1+\mathcal O(\delta)\right)&  ,\text{otherwise.}
                    \end{cases}
\end{align*}
In the case of \(i\) and \(j\) belonging to different communities we conclude for large enough \(n\)
 \begin{align}
\mathbb E[\widetilde \theta_{i\lor j} - \widetilde\theta_{ij} | \mathcal C_i, \mathcal C_j]&= \frac{a+c}{2d}\left(1+\mathcal O(\delta)\right) - \frac{c}{d}\left(1+\mathcal O(\delta)\right)\nonumber\\
  &=\frac{a-c}{2d}  + \mathcal O(\delta \theta) \nonumber\\
  & \propto  \frac{(\theta-\rho)^3}{\theta^2 }  + \mathcal O(\delta \theta) \label{ineq_diff_lower_bd_simplified}
 \end{align}
and otherwise
\begin{align}
\mathbb E[ \widetilde\theta_{i\lor j} - \widetilde\theta_{ij} | \mathcal C_i, \mathcal C_j]  &= \frac{a}{d}\left(1+\mathcal O(\delta)\right) - \frac{a}{d}\left(1+\mathcal O(\delta) \right)\nonumber\\
  &=\mathcal O\left(\delta \theta\right). \label{ineq_diff_upper_bd_simplified}
\end{align}
Recall that with large probability, \(N_{ij} = d(1+\mathcal O(\delta))\). From Lemma \ref{lem_bernstein} we deduce that on an event of large probability,
\begin{align*}
\log \left(\frac{1}{2}\mathbb P\left(|S_{ij} -   \mathbb E [S_{ij} \vert \mathcal C_i^j, \mathcal C_j^i] |\geq \frac{ \rho d}{3} \bigg\vert \mathcal C_i^j, \mathcal C_j^i\right)\right) &\lesssim -\rho d \\
& \lesssim - \theta^3 n^2.
\end{align*}
As the RHS does not depend on \(\mathcal C_i^j\) or \(\mathcal C_j^i\), the bound is valid for the unconditional probability as well. From our assumption \(\theta^3 n^2 \gg \log n\) and \(\theta < \frac{1}{2}\) we deduce that with large probability for some \(x\in\{a, c\}\subseteq [\rho d, \theta d]\)
\begin{align}
 \widetilde\theta_{ij} &\in \left[\frac{x-\frac{\rho}{3}}{d}(1+\mathcal O(\delta)),\frac{x+\frac{\rho}{3}}{d}(1+\mathcal O(\delta)) \right] \nonumber\\ 
&\subseteq  \left[\frac{\rho}{2}, \theta + \frac{\rho}{2}\right].\label{bounded_interval}
\end{align}
Similarly, with large probability
\begin{align}\widetilde \theta_{i\lor j} \in \left[\frac{\rho}{2}, \theta + \frac{\rho}{2}\right]\text{.}\nonumber\label{condition_interval}\end{align}
In particular, the Fisher information of a Bernoulli variable with a mean parameter bounded as above is up to bounded constants given by \(\theta^{-1}\).

Analogously, the condition \(\theta \gg (\log n)^\frac{1}{3} n^{-\frac{2}{3}}\) and Lemma \ref{lem_bernstein} imply that with large probability
\begin{align*}
|S_{ij} -   \mathbb E [S_{ij}|\mathcal C_i, \mathcal C_j] |&\lesssim (\log n)^\frac{1}{2} \theta^\frac{3}{2} n 
\end{align*}
and
\begin{align*}
 |S_{i\lor j} -   \mathbb E [S_{i\lor j}|\mathcal C_i, \mathcal C_j] |&\lesssim (\log n)^\frac{1}{2} \theta^\frac{3}{2}n.
\end{align*}
In particular,
\begin{equation}
 |\widetilde \theta_{ij} -   \mathbb E [\widetilde \theta_{ij}|\mathcal C_i, \mathcal C_j] |\lesssim (\log n)^\frac{1}{2} \theta^{-\frac{1}{2} }n^{-1} \label{bound_stochastic_part}
\end{equation}
and
\begin{equation}
 |\widetilde \theta_{i\lor j} -   \mathbb E [\widetilde \theta_{i\lor j}|\mathcal C_i, \mathcal C_j] | \lesssim (\log n)^\frac{1}{2} \theta^{-\frac{1}{2}}n^{-1}. \nonumber
\end{equation}
Using the quadratic Taylor expansion of the Kullbach-Leibler and \eqref{ineq_diff_lower_bd_simplified}, we conclude in the case where \(i\) and \(j\) belong to different communities 
\begin{align}
\theta^{\frac{1}{2}}\mathcal K^\frac{1}{2}(\widetilde \theta_{ij}, \widetilde \theta_{i\lor j})&\gtrsim    |\widetilde\theta_{ij}- \widetilde\theta_{i\lor j}| \nonumber\\
&\geq \left|\mathbb E[\widetilde \theta_{ij} | \mathcal C_i, \mathcal C_j]-\mathbb E[\widetilde \theta_{i\lor j} | \mathcal C_i, \mathcal C_j]\right| -\left\vert\widetilde \theta_{ij} - \mathbb E[\widetilde \theta_{ij} | \mathcal C_i, \mathcal C_j]\right\vert - \left\vert\widetilde \theta_{i\lor j} - \mathbb E[\widetilde \theta_{i\lor j} | \mathcal C_i, \mathcal C_j]\right\vert\nonumber\\
&\gtrsim (\theta - \rho )^3\theta^{-2} + \mathcal O\left(\delta \theta + (\log n)^\frac{1}{2}\theta^{-\frac{1}{2}} n^{-1}\right), \label{KL_final_lower_bd}
\end{align}
whereas in the other case we conclude analogously from \eqref{ineq_diff_upper_bd_simplified}
\begin{align}
\theta^{\frac{1}{2}}\mathcal K^\frac{1}{2}(\theta_{ij}, \theta_{i\lor j})&\lesssim \delta\theta + (\log n)^\frac{1}{2}\theta^{-\frac{1}{2}} n^{-1}. \label{KL_final_upper_bd}
\end{align}
The same inequalities hold for \(\mathcal K^\frac{1}{2}(\widetilde\theta_{ii}, \widetilde \theta_{i\lor j})\) and \(\mathcal K^\frac{1}{2}(\widetilde\theta_{jj}, \widetilde \theta_{i\lor j})\). 
To ensure that the Kullback-Leibler divergence in the second case \eqref{KL_final_upper_bd} is significantly smaller than in the first case \eqref{KL_final_lower_bd}, it will suffice to check
\begin{align}
 \delta \theta&\ll \frac{(\theta - \rho )^3}{\theta^2}\nonumber\\
\Leftrightarrow \qquad \max\left\{\frac{1}{\theta n}, \frac{\epsilon}{\theta} \right\}&\ll \left(\frac{\theta-\rho}{\theta}\right)^3\label{condition1}
\end{align}
and
\begin{align}
 \sqrt{\frac{\log n}{\theta n^2}} &\ll \frac{(\theta - \rho)^3}{\theta^2} .\label{condition2}
\end{align}
With \(\cdot \ll\cdot \) we denote that inequality is satisfied for small enough yet not specified constant. 
These are all conditions in the proof that remain to be checked except the large probability assumption 
\begin{align}
 \min \left\{ \frac{\epsilon^2 n}{\theta \log n}, \frac{\epsilon n}{\log n} \right\} \gg 1 \label{condition3}
\end{align}
given in \eqref{condition_large_probability}.
An \(\epsilon > 0\) satisfying \eqref{condition1} and \eqref{condition3} exists as long as 
\begin{align}
 \frac{\log n}{n} + \sqrt{\frac{\theta \log n}{n}} \ll \frac{(\theta - \rho )^3}{\theta^2}, \label{condition21}
\end{align}
whereas the \(\epsilon\)-independent part of conditions \eqref{condition1} and \eqref{condition2} can be summarized by 
\begin{align}
 \frac{1}{n} + \sqrt{\frac{\log n}{\theta n^2}} \ll \frac{(\theta - \rho)^3}{\theta^2}.\label{condition22}
\end{align}
By simple calculus, we can verify that our assumption 
\begin{align}
 \theta - \rho\gg \max\{n^{-\frac{1}{3}} \theta^\frac{1}{2}(\log n)^\frac{1}{6},   n^{-\frac{1}{6}} \theta^\frac{5}{6} (\log n)^\frac{1}{6}\} \nonumber
\end{align}
implies conditions \eqref{condition21} and \eqref{condition22}. In view of the fact that test is scaled with \(N_{ij} = d(1+\mathcal O(\delta))\) and \(\delta \ll 1\) this also ensures consistency of the test (provided a proper threshold is given).

Next, let us consider the original AWCD algorithm. The proof deviates because we need to add an additional summand \(\mathcal O(|\mathcal C_i| + |\mathcal C_j|) =\mathcal O(\theta n) \) to the conditional expectation of \(S_{ij}\) in \eqref{conditional_expectation_S}. Therefore we also need to modify the definitions \(R\defined n + \epsilon \theta n^2\) and \(\delta \defined \max \{\frac{1}{\theta^2 n}, \frac{\epsilon}{n}\}\), cf. \eqref{defn_R} and \eqref{defn_delta}. Otherwise, we can follow the above proof, although \(S_{ij}\) is, after conditioning on \(\mathcal C_i\) and \(\mathcal C_j\), not any longer a sum of independent Bernoulli variables. However, it can be split into a deterministic part and the same sum of independent Bernoulli variables as above. By bounding the deterministic part, we can still establish inequality \eqref{bounded_interval}. Similarly to \(\eqref{condition21}\) and \(\eqref{condition22}\) we end up with the sufficient condition
\begin{align*}
 \frac{1}{\theta n} + \sqrt{\frac{\theta \log n}{n}} + \sqrt{\frac{\log n}{\theta n^2}}& \ll \frac{(\theta - \rho )^3}{\theta^2}
\end{align*}
which is satisfied by our condition
\begin{align}
 \theta - \rho \gg \max \{ n^{-\frac{1}{3}}\theta^\frac{1}{3}, n^{-\frac{1}{6}}\theta^\frac{5}{6} (\log n)^\frac{1}{6}\}. \nonumber
\end{align}
\end{proof}

\begin{proof}[Proof of Observation \ref{obs_improved_starting_guess}]
We can follow the proof of theorem \ref{thm_original_AWCD} by modifying \eqref{eqn_size_one_nbhd} to
\[|\mathcal C_i\cap\mathcal C_\alpha^* | = \mathbbm 1(i\in\mathcal C_\alpha) \theta n\]
in the first case and in the second case to \[|\mathcal C_i\cap\mathcal C_\alpha^* | = \mathbbm 1(i\in\mathcal C_\alpha) \theta^\frac{1}{2} n.\] In both cases, we end up with
 \begin{align}
\mathbb E[\widetilde \theta_{i\lor j} -\widetilde \theta_{ij} | \mathcal C_i, \mathcal C_j]& \propto \theta - \rho + \mathcal O(n^{-1}). \nonumber
 \end{align}
 However, using the larger starting guess, the stochastic bound \eqref{bound_stochastic_part} can be improved to 
 \begin{align*}
   |\widetilde\theta_{ij} - \mathbb E [\widetilde\theta_{ij}|\mathcal C_i, \mathcal C_j] |&\lesssim (\log n)^\frac{1}{2} n^{-1}.
 \end{align*}
 Finally, we end up with the sufficient condition
 \[\theta - \rho \gg (\log n)^\frac{1}{2} \theta^{-\frac{1}{2}} n^{-1} \]
in the case of the smaller starting guess and in the other case with
  \[\theta - \rho \gg (\log n)^\frac{1}{2} n^{-1} .\]
\end{proof}


\begin{lem}\label{Lemma_concentration_result_general_pathlength}
Suppose \(K\), \(k\) and \(\epsilon>0\) are fixed. We assume
\begin{itemize}
 \item \( \min\left\{\frac{\epsilon^2 n}{\theta}, \epsilon n\right\} \gg \log n\),
 \item \(\epsilon \ll \theta\),
 \item \(\theta n \gg 1\), and
 \item \(\theta^k n^{k-1} \ll 1\).
\end{itemize}
Then with large probability, we have
\[ |\mathcal C_{i}^k \cap \mathcal C_\alpha^* |  = a_k^{i, \alpha}n^k\left(1+\mathcal O\left(\frac{\epsilon}{\rho}+\theta^kn^{k-1}\right)\right),\]
where we define recursively
\begin{align*}
 a_1 &=  \theta,\\
 b_1 &= \rho,\\
 a_k &=  \theta a_{k-1} + (K-1)\rho b_{k-1},\\
 b_k &=    \rho a_{k-1} + \theta b_{k-1} +(K-2)\rho b_{k-1}\text{ and}\\
 a_k^{i, \alpha} &=\begin{cases}
                   a_k &, i\in \mathcal C^*_\alpha\\
                   b_k &,i \notin \mathcal C^*_\alpha.
                  \end{cases}
\end{align*}
The upper bound can be improved to
\[ |\mathcal C_{i}^k \cap \mathcal C_\alpha^* |  \leq a_k^{i, \alpha}n^k\left(1+\mathcal O\left(\frac{\epsilon}{\rho}\right)\right).\]
\end{lem}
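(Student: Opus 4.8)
The plan is to prove, by induction on $k$, that the stated estimates hold simultaneously for all path lengths $\ell\le k$, exploiting a breadth-first exploration of the graph from the vertex $i$. \emph{Base case $k=1$:} here $\mathcal C_i^1\cap\mathcal C_\alpha^*$ is the set of vertices of $\mathcal C_\alpha^*$ adjacent to $i$, so $|\mathcal C_i^1\cap\mathcal C_\alpha^*|\sim\text{Binom}\big(|\mathcal C_\alpha^*|-\mathbbm1(i\in\mathcal C_\alpha^*),\,\theta_i^\alpha\big)$ with $\theta_i^\alpha=\theta$ if $i\in\mathcal C_\alpha^*$ and $\theta_i^\alpha=\rho$ otherwise. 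Lemma \ref{lem_bernstein_homogeneous} with deviation $\epsilon$ gives $|\mathcal C_i^1\cap\mathcal C_\alpha^*|=\theta_i^\alpha n+\mathcal O(\epsilon n)=a_1^{i,\alpha}n\big(1+\mathcal O(\epsilon/\rho)\big)$ on an event of probability $1-2\exp(-\epsilon^2(n-1)/(2\theta+\epsilon))$, which tends to $1$ by the first assumption. There is no Taylor remainder at this level, so both the two-sided bound and the improved upper bound hold.

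\emph{Inductive step.} Fix $k\ge2$ and condition on the BFS level sets $\mathcal C_i^{0},\dots,\mathcal C_i^{k-1}$. Revealing every edge incident to $\mathcal C_i^{\le k-2}$ determines these sets and leaves the edges between $\mathcal C_i^{k-1}$ and $V\setminus\mathcal C_i^{\le k-1}$ untouched; since a vertex $v\notin\mathcal C_i^{\le k-1}$ lies in $\mathcal C_i^k$ exactly when it is adjacent to $\mathcal C_i^{k-1}$, and since in the SBM the probability of this event depends only on the community-wise counts $N^\beta:=|\mathcal C_i^{k-1}\cap\mathcal C_\beta^*|$ (not on the identities of the explored vertices), the variable $|\mathcal C_i^k\cap\mathcal C_\alpha^*|$ is conditionally $\text{Binom}(M_\alpha,q_\alpha)$ with $M_\alpha=|\mathcal C_\alpha^*\setminus\mathcal C_i^{\le k-1}|$ and $q_\alpha=1-(1-\theta)^{N^\alpha}(1-\rho)^{\sum_{\beta\ne\alpha}N^\beta}$. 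On the induction event, $N^\beta=a_{k-1}^{i,\beta}n^{k-1}\big(1+\mathcal O(\epsilon/\rho+\theta^{k-1}n^{k-2})\big)$, and the sparsity assumption forces $S_\alpha:=\theta N^\alpha+\rho\sum_{\beta\ne\alpha}N^\beta\asymp\theta^k n^{k-1}\ll1$. A second-order Taylor expansion gives $q_\alpha=S_\alpha\big(1+\mathcal O(\theta^k n^{k-1})\big)$, and collecting the $\theta$- and $\rho$-weighted counts according to whether the relevant communities coincide with $\alpha$ or with the community of $i$ reproduces exactly the recursion for $a_k,b_k$, so $q_\alpha=a_k^{i,\alpha}n^{k-1}\big(1+\mathcal O(\epsilon/\rho+\theta^k n^{k-1})\big)$; the elementary bound $1-\prod_j(1-p_j)\le\sum_j p_j$ gives in addition the sparsity-free upper estimate $q_\alpha\le a_k^{i,\alpha}n^{k-1}(1+\mathcal O(\epsilon/\rho))$. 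Moreover $M_\alpha=n-\mathcal O\big(|\mathcal C_i^{\le k-1}\cap\mathcal C_\alpha^*|\big)=n\big(1+\mathcal O(\theta^{k-1}n^{k-2})\big)$, and $\theta n\gg1$ makes $\theta^{k-1}n^{k-2}\ll\theta^k n^{k-1}$. Finally, applying Lemma \ref{lem_bernstein_homogeneous} to the conditional Binomial with a deviation of relative size $\epsilon/\rho$ — whose exponent is $\gtrsim(\epsilon/\rho)^2 a_k^{i,\alpha}n^k\gtrsim(\epsilon^2 n/\theta)(\theta n)^{k-1}\gg\log n$, using $a_k^{i,\alpha}\ge b_k\ge\rho\theta^{k-1}$ — yields $|\mathcal C_i^k\cap\mathcal C_\alpha^*|=a_k^{i,\alpha}n^k\big(1+\mathcal O(\epsilon/\rho+\theta^k n^{k-1})\big)$, and the improved upper bound follows by the same step from the sparsity-free estimate of $q_\alpha$. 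A union bound over the $\mathcal O(K)$ exceptional events at each of the finitely many levels closes the induction.

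\emph{Main obstacle.} The bulk of the work is the error bookkeeping: one must confirm that the error inherited from level $k-1$ (relative size $\epsilon/\rho+\theta^{k-1}n^{k-2}$), the Taylor remainder ($\theta^k n^{k-1}$), the correction from already-explored vertices ($\theta^{k-1}n^{k-2}$) and the Bernstein fluctuation ($\epsilon/\rho$) all collapse into $\mathcal O(\epsilon/\rho+\theta^k n^{k-1})$; the two quantitative facts doing the work are $\theta n\gg1$ (so $\theta^{k-1}n^{k-2}\ll\theta^k n^{k-1}$) and $a_k^{i,\alpha}\asymp_{K,k}\theta^k$ together with $b_k\ge\rho\theta^{k-1}$ (so the binomial mean is large enough for the Bernstein exponent to beat $\log n$). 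The other point requiring care is the conditioning argument establishing the \emph{exact} conditional Binomial law from the BFS exploration, which is what makes Lemma \ref{lem_bernstein_homogeneous} directly applicable.
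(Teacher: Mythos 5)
Your argument is correct, but it follows a genuinely different route from the paper. The paper splits the two bounds: the upper bound comes from naive subadditivity of the frontier sizes plus Lemma \ref{lem_bernstein_homogeneous} and a union bound, giving the factor \((1+\epsilon/\rho)^k\); the lower bound is the delicate part, handled by fixing each frontier vertex \(v\in\mathcal C_i^{k-1}\), conditioning on \(\mathcal S(\mathcal C_i^{\leq k-1})\cup\bigcup_{v_i\neq v}\mathcal S(\mathcal C_{v_i})\), and summing lower bounds on the \emph{disjoint} sets of new members reachable only through \(v\), with the depletion factor \(1-\delta_k\), \(\delta_k=\mathcal O(\theta^k n^{k-1})\), absorbing the already-explored vertices. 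You instead observe that, after revealing all edges incident to \(\mathcal C_i^{\leq k-2}\) (deferred decisions), the next level size \(|\mathcal C_i^k\cap\mathcal C_\alpha^*|\) is \emph{exactly} conditionally \(\mathrm{Binom}(M_\alpha,q_\alpha)\) with \(q_\alpha=1-(1-\theta)^{N^\alpha}(1-\rho)^{\sum_{\beta\neq\alpha}N^\beta}\), so both the upper and the lower estimate come out of one application of Lemma \ref{lem_bernstein_homogeneous}, with the overlap issue that drives the paper's lower-bound construction replaced by the Taylor remainder \(q_\alpha=S_\alpha(1+\mathcal O(\theta^k n^{k-1}))\) and the depletion \(M_\alpha=n(1-\mathcal O(\theta^{k-1}n^{k-2}))\); your identification of \(S_\alpha\) with the recursion for \(a_k,b_k\), the lower bound \(a_k^{i,\alpha}\geq\rho\theta^{k-1}\) making the Bernstein exponent beat \(\log n\), and the sparsity-free bound \(q_\alpha\leq S_\alpha\) yielding the improved upper estimate are all accurate. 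What your version buys is a cleaner, unified treatment (exact conditional law, no per-vertex disjointification); what the paper's buys is that it never needs the precise conditional binomial structure, only one-sided stochastic comparisons. One small caveat, shared with the paper's own proof: several of your simplifications (e.g. \(S_\alpha\asymp\theta^k n^{k-1}\), treating the Bernstein deviation as relative size \(\epsilon/\rho\)) implicitly use \(\epsilon/\rho\lesssim 1\), which holds in the regime where the lemma is applied (\(\epsilon\ll\theta\), \(\theta/\rho\leq C\)) but is worth stating; likewise, if the conclusion is to hold uniformly over all vertices \(i\), the union bound should run over the \(nK\) choices of \(i\) as well, which your \(\gg\log n\) exponent comfortably covers.
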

\begin{rmk}
 As \(|\mathcal C_{i}^{k-1} \cap \mathcal C_\alpha^* |\) is significantly smaller than \(|\mathcal C_{i}^{k} \cap \mathcal C_\alpha^* |\), exactly the same concentration result holds for \(|\mathcal C_{i}^{\leq k} \cap \mathcal C_\alpha^* |\).
 The increase corresponds to an additional factor \(\left(1 + \mathcal O\left(\frac{1}{\theta n}\right)\right)\) - however in view of \(\frac{\epsilon}{\rho} \gg \frac{1}{\theta n}\) this factor can be omitted.
\end{rmk}
\begin{rmk}\label{rmk_difference_explicit_general_k}
By induction, we can compute an explicit formula for the difference
 \begin{align*}
a_k-b_k &= (\theta- \rho)(a_{k-1}-b_{k-1})\\
&=(\theta - \rho)^k.
\end{align*}
\end{rmk}

\begin{proof}[Proof of Lemma \ref{Lemma_concentration_result_general_pathlength}]
Applying Lemma \ref{lem_bernstein_homogeneous} and the union bound, a simple induction yields the following upper bound
\begin{align}
 |\mathcal C_{i}^k\cap \mathcal C_\alpha^*| &\leq a_k^{i, \alpha}n^k\left(1+\frac{\epsilon}{\rho}\right)^k
 \label{upper_concentration_result_general_k}
 \end{align}
as long as 
\begin{equation}
 \min\left\{\frac{\epsilon^2 n}{\theta}, \epsilon n\right\} \geq c(k) \log n.
 \label{condition_w_h_p}
\end{equation}
The lower bound is not as simple because of the potential overlap of the involved 1-neighborhoods. First of all, our assumptions \(\epsilon \ll \theta\), \(\theta n \gg 1\) as well as \(\theta^k n^{k-1} \ll 1\) are designed to ensure the upper bound
\begin{align*}
 |\mathcal C_{i}^{\leq k}\cap \mathcal C_\alpha^*| &\numleq{\ref{upper_concentration_result_general_k}} \sum_{l=1}^k K^{l-1}\theta^l n^l \left(1+\frac{\epsilon}{\rho}\right)^l\\
 & \leq  \frac{n}{2}.
\end{align*}
This allows us to apply Lemma \ref{lem_bernstein_homogeneous} to the set of members not contained in \(\mathcal C_{i, j}^{\leq k}\) while the large probability of the bound is still ensured by condition \eqref{condition_w_h_p}. The obtained lower bounds on the size of the 1-neighborhoods around points in \(\mathcal C_i^{k-1}\) take into account the potential overlaps and can thus be summarized.
To be precise, let us condition on the event that the (\(k-1\))-neighborhood is equal to \(\{v_1, v_2, \dots v_m\}\). Let us fix \(v\in\mathcal C^{k-1}\) and denote by \(\mathcal S(\mathcal C_i^{\leq l})\) the set of edges contained by a path of length at most \(l\) starting from \(i\).
We condition additionally on \(\mathcal S(\mathcal C_i^{\leq k-1})\cup \left(\bigcup_{v_i\neq v}\mathcal S(\mathcal C_{v_i})\right)\).
Now we apply Lemma \ref{lem_bernstein_homogeneous} to get a lower bound of the number of members in \(\mathcal C_\alpha^*\) that are connected to \(v\) and not contained in \(\mathcal C_i^{\leq k-1}\cup \left( \bigcup_{v_i\neq v}\mathcal C_{v_i}\right)\). This is in fact a lower bound on the number of members in \(\mathcal C_i^k \cap \mathcal C_\alpha^*\) that are only connected to \(i\) via a k-path containing \(v\). This lower bound is given by
\begin{align*}
& (\theta_v^\alpha - \epsilon) \left| \mathcal C_\alpha^* \setminus \left(\{i\} \cup \mathcal C_i^{\leq k}\cap \mathcal C_\alpha^*\right) \right|\\
 \geq & (\theta_v^\alpha - \epsilon)\left(n-1-a_1^{i, \alpha} n^1 \left(1+\frac{\epsilon}{\rho}\right)^1 - \dots - \alpha_k^{i, \alpha}n^{k} \left(1+\frac{\epsilon}{\rho}\right)^{k}\right)\\
=& (\theta_v^\alpha - \epsilon)\left(n-1-\sum_{l=1}^k a_l^{i, \alpha} n^l \left(1+\frac{\epsilon}{\rho}\right)^l\right)\\
\geq &(\theta_v^\alpha - \epsilon)\left(n-1-\sum_{l=1}^k K^{l-1} \theta^l n^l \left(1+\frac{\epsilon}{\rho}\right)^l\right)\\
= &(\theta_v^\alpha - \epsilon)\left(n- \frac{1}{K} \frac{K^{k+1} \theta^{k+1} n^{k+1} \left(1+\frac{\epsilon}{\rho}\right)^{k+1}- 1}{K\theta n \left(1+\frac{\epsilon}{\rho}\right) - 1}\right)\\
=&(\theta_v^\alpha - \epsilon) n(1-\delta_k)
\end{align*}
for  \[\delta_k = \frac{1}{n}\left(\frac{1}{K} \frac{K^{k+1} \theta^{k+1} n^{k+1} \left(1+\frac{\epsilon}{\rho}\right)^{k+1}- 1}{K\theta n \left(1+\frac{\epsilon}{\rho}\right) - 1}\right).\]
This bound is of course valid in the unconditional form as well. By induction, we conclude
\begin{align*}
|\mathcal C_i^k\cap \mathcal C_\alpha^*| & \geq \sum_\beta |\mathcal C_i^{k-1} \cap \mathcal C_\beta^*| \left[\theta \mathbbm 1(\alpha = \beta) + \rho \mathbbm1(\alpha\neq\beta) - \epsilon\right]n(1-\delta_k)\\
& \geq \alpha_i^k n^k \left(1-\frac{\epsilon}{\rho}\right)^k \prod_{l=1}^k (1-\delta_l).
\end{align*}
So we get in view of \(\delta_{k-1} < \delta_k\)
\begin{align*}
|\mathcal C_{i}^k \cap \mathcal C_\alpha^* | \in \left [a_k^{i, \alpha}n^k \left(1-\frac{\epsilon}{\rho}\right)^k(1-\delta_k)^{k}, a_k^{i, \alpha}n^k \left(1+\frac{\epsilon}{\rho}\right)^k\right ].
\end{align*}
Note that \(\delta_k = \mathcal O( \theta^k n^{k-1})\), so our assumptions ensure \(\delta_k \ll 1\). Also taking into account \(\epsilon \ll \theta\), we conclude 
\begin{align}
|\mathcal C_{i}^k \cap \mathcal C_\alpha^* | = a_k^{i, \alpha}n^k\left(1+\mathcal O\left(\frac{\epsilon}{\rho}+\theta^k n^{k-1}\right)\right). \nonumber 
\end{align}
\end{proof}


\begin{lem}\label{lem_overlap_k_nbhoods}
Under the same assumptions as in Lemma \ref{Lemma_concentration_result_general_pathlength} it holds on an event of large probability
 \begin{align*}
\left| \mathcal C_i^k \cap \mathcal C_j^k \cap \mathcal C_\alpha^* \right| \lesssim \theta^{2k} n^{2k-1} + \epsilon \theta^{k-1}n^k.
 \end{align*}
If we further assume that there exists no path of length at most \(k - l\) between \(i\) and \(j\) for some \(1 \leq l < k\), we have 
\begin{align*}
\left| \mathcal C_i^l \cap \mathcal C_j^k \cap \mathcal C_\alpha^* \right| \lesssim  \theta^{l+k} n^{l+k - 1} + \epsilon \theta^{l-1} n^l.
 \end{align*}
\end{lem}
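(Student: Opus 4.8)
The plan is to prove both inequalities by induction, the first on $k$ and the second on $l$, using a two-source breadth-first exploration in the spirit of the proof of Lemma~\ref{Lemma_concentration_result_general_pathlength}. Write $\mathcal C_i^{\le k-1}=\bigcup_{r\le k-1}\mathcal C_i^r$. Since a vertex is at distance exactly $k$ from $i$ iff it has a neighbour in $\mathcal C_i^{k-1}$ and no neighbour in $\mathcal C_i^{\le k-2}$, one has $\mathcal C_i^k=\{v:v\sim\mathcal C_i^{k-1}\}\setminus\mathcal C_i^{\le k-1}$, so that
\[
 \mathcal C_i^k\cap\mathcal C_j^k\cap\mathcal C_\alpha^*\ \subseteq\ \bigl\{v\in\mathcal C_\alpha^*\setminus(\mathcal C_i^{\le k-1}\cup\mathcal C_j^{\le k-1}):v\sim\mathcal C_i^{k-1}\text{ and }v\sim\mathcal C_j^{k-1}\bigr\}.
\]
I would condition on the $\sigma$-field $\mathcal F$ generated by running BFS from $i$ and from $j$ to depth $k-1$: this reveals exactly the edges incident to $\mathcal C_i^{\le k-2}\cup\mathcal C_j^{\le k-2}$, hence determines $\mathcal C_i^{\le k-1}$ and $\mathcal C_j^{\le k-1}$ with their layers, while every edge from a vertex $v\notin\mathcal C_i^{\le k-1}\cup\mathcal C_j^{\le k-1}$ to the outer shells $\mathcal C_i^{k-1},\mathcal C_j^{k-1}$ is still a fresh Bernoulli variable (edges from such a $v$ into $\mathcal C_i^{\le k-2}\cup\mathcal C_j^{\le k-2}$ are forced absent, which only helps). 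Therefore, given $\mathcal F$, the indicators $\mathbbm 1(v\sim\mathcal C_i^{k-1})\,\mathbbm 1(v\sim\mathcal C_j^{k-1})$ over $v\in\mathcal C_\alpha^*\setminus(\mathcal C_i^{\le k-1}\cup\mathcal C_j^{\le k-1})$ are independent across $v$ (they involve disjoint edge sets) and each has conditional mean at most $\theta^2\,|\mathcal C_i^{k-1}|\,|\mathcal C_j^{k-1}|+\theta\,|\mathcal C_i^{k-1}\cap\mathcal C_j^{k-1}|$, the last term coming from the common vertices of the two shells.

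By Lemma~\ref{Lemma_concentration_result_general_pathlength} (applied with index $k-1$; note $a_{k-1},b_{k-1}\lesssim\theta^{k-1}$ since $K$ is fixed) we have $|\mathcal C_i^{k-1}|,|\mathcal C_j^{k-1}|\lesssim\theta^{k-1}n^{k-1}$ with large probability, and by the inductive hypothesis $|\mathcal C_i^{k-1}\cap\mathcal C_j^{k-1}|\lesssim\theta^{2k-2}n^{2k-3}+\epsilon\theta^{k-2}n^{k-1}$. Multiplying the per-vertex conditional mean by $n$ and using $\theta n\gg 1$ to discard $\theta^{2k-1}n^{2k-2}$ against $\theta^{2k}n^{2k-1}$ yields the $\mathcal F$-measurable bound $\mathbb E[\,|\mathcal C_i^k\cap\mathcal C_j^k\cap\mathcal C_\alpha^*|\mid\mathcal F\,]\lesssim\theta^{2k}n^{2k-1}+\epsilon\theta^{k-1}n^k$. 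Given $\mathcal F$ the count is a sum of independent Bernoulli variables, stochastically dominated by $\mathrm{Binom}(n,q)$ with $q$ the above per-vertex bound, so Lemma~\ref{lem_bernstein_homogeneous} bounds it by a constant multiple of its conditional mean plus a fluctuation of order $\sqrt{(\text{mean})\log n}+\log n$. Since $\epsilon n\gg\log n$ and $(\theta n)^{k-1}\gg 1$ give $\epsilon\theta^{k-1}n^k\gg\log n$, and $\epsilon^2n/\theta\gg\log n$ (hence $\epsilon^2n/\theta^2\gg\log n$, using $\theta\le 1$) makes $\sqrt{\theta^{2k}n^{2k-1}\log n}$ dominated by $\epsilon\theta^{k-1}n^k$, this fluctuation is absorbed. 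A union bound over the pairs $(i,j)$ and the $K$ values of $\alpha$ (adjusting the constant in the large-probability conditions) then gives the first inequality. The base case $k=1$ is just the concentration of the number of common neighbours of $i$ and $j$ in $\mathcal C_\alpha^*$, a sum of independent Bernoulli variables of mean $\le\theta^2$, handled directly by Lemma~\ref{lem_bernstein_homogeneous}.

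For the second inequality the same exploration applies with $\mathcal C_i^k$ replaced by $\mathcal C_i^l$; the only new point is that the recursion now produces the inner overlap $|\mathcal C_i^{l-1}\cap\mathcal C_j^{k-1}|$, and the hypothesis ``no path of length at most $k-l$ between $i$ and $j$'' is exactly what allows one to invoke the second inequality with indices $(l-1,k-1)$ (whose gap is again $k-l$), so the induction on $l$ closes; when $l=1$ the inner overlap $\{i\}\cap\mathcal C_j^{k-1}$ is empty precisely because there is no path of length $\le k-1=k-l$ from $i$ to $j$, and the estimate reduces to $n\cdot\theta\cdot\theta\,|\mathcal C_j^{k-1}|\lesssim\theta^{l+k}n^{l+k-1}$. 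I expect the main obstacle to be the bookkeeping in the two-source exploration: one must check that conditioning on $\{v\notin\mathcal C_i^{\le k-1}\cup\mathcal C_j^{\le k-1}\}$ kills only edges from $v$ into $\mathcal C_i^{\le k-2}\cup\mathcal C_j^{\le k-2}$, leaving the edges into the two outer shells genuinely fresh and conditionally independent across $v$, and that the size estimates fed in are $\mathcal F$-measurable; granted this, the rest is the routine check that $\theta^{2k-1}n^{2k-2}$, $\sqrt{(\text{mean})\log n}$ and $\log n$ are all dominated by $\theta^{2k}n^{2k-1}+\epsilon\theta^{k-1}n^k$ under the standing assumptions $\theta n\gg1$, $\epsilon\ll\theta$, $\theta^k n^{k-1}\ll1$ and $\min\{\epsilon^2n/\theta,\epsilon n\}\gg\log n$.
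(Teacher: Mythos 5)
Your proposal is correct, but it proves the lemma by a genuinely different route than the paper. The paper never looks at the overlap directly: it reuses the growth iteration of Lemma \ref{Lemma_concentration_result_general_pathlength} to produce a \emph{lower} bound on $|\mathcal C_\alpha^*\cap\mathcal C_i^k\setminus\mathcal C_j^{\leq k}|$ (growing the neighborhood of $i$ on the $m=n-2K^{k-1}\theta^kn^k$ vertices that avoid the already-explored neighborhood of $j$), and then obtains the overlap as the difference between the Lemma \ref{Lemma_concentration_result_general_pathlength} upper bound $a_k^{i,\alpha}n^k(1+\mathcal O(\epsilon/\rho))$ and this avoidance lower bound $a_k^{i,\alpha}m^k(1-\mathcal O(\epsilon/\theta))$, i.e.\ $a_k^{i,\alpha}(n^k-m^k)+\mathcal O(\epsilon\theta^{k-1}n^k)\lesssim\theta^{2k}n^{2k-1}+\epsilon\theta^{k-1}n^k$; the second inequality is the same sandwich started from the conditioning on $\mathcal S(\mathcal C_j^{\leq k-l+1})$, which is where the no-short-path hypothesis enters there. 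You instead bound the overlap head-on: a vertex counted in $\mathcal C_i^k\cap\mathcal C_j^k\cap\mathcal C_\alpha^*$ must attach to both $(k-1)$-shells, the union bound gives the per-vertex conditional probability $\theta^2|\mathcal C_i^{k-1}||\mathcal C_j^{k-1}|+\theta|\mathcal C_i^{k-1}\cap\mathcal C_j^{k-1}|$, the shell-overlap term is controlled by induction on $k$ (respectively on $l$, with the no-short-path hypothesis used exactly to make the terminal overlap $\{i\}\cap\mathcal C_j^{k-l}$ empty — correctly placed, and indeed indispensable, since otherwise a spurious $\theta^l n^l$ term would propagate up), and Bernstein absorbs the fluctuations; I checked that $\sqrt{\theta^{2k}n^{2k-1}\log n}\ll\epsilon\theta^{k-1}n^k$ and $\log n\ll\epsilon\theta^{k-1}n^k$ do follow from $\min\{\epsilon^2n/\theta,\epsilon n\}\gg\log n$, $\theta n\gg1$, $\theta\leq1$, as you claim. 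What each approach buys: the paper's sandwich recycles the machinery of Lemma \ref{Lemma_concentration_result_general_pathlength} verbatim and needs no separate induction, while your first-moment/exploration argument is more transparent about why the overlap is small and does not need the refined lower-bound iteration — at the price of the extra induction and of the two-source exposure bookkeeping (which you correctly identify as the delicate point: the exploration to depth $k-1$ from both roots reveals exactly the edges incident to $\mathcal C_i^{\leq k-2}\cup\mathcal C_j^{\leq k-2}$, the shells and their overlap are measurable with respect to it, the forced-absent edges only decrease the count, and the per-vertex events use disjoint fresh edge sets, so conditional independence across candidate vertices does hold). Both arguments yield the stated bound under the same assumptions.
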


\begin{proof}
According to Lemma \ref{Lemma_concentration_result_general_pathlength} 
\begin{align}
 |\mathcal C_i^{\leq k} \cap \mathcal C_\alpha^*|  \leq a_k^{i, \alpha} n^k \left( 1+ \mathcal O \left( \frac{\epsilon}{\theta} + \theta^k n^{k-1} \right) \right).
\label{upper_bd_for_intersec_bd} \end{align}
From our assumptions, we conclude further for any \(\beta\)
\begin{align*}
  |\mathcal C_i^{\leq k} \cap \mathcal C_\beta^*|  < 2 K^{k-1} \theta^{k} n^{k}.
\end{align*}
Let us introduce the notation \[m \defined n-2 K^{k-1} \theta^{k} n^{k}.\]
Note that our assumptions imply \(m > \frac{n}{2}\).
After conditioning on \(\mathcal C_j^1\), we can apply Lemma \ref{lem_bernstein_homogeneous}
to get a lower bound on the number of edges between \(i\) and the remaining points in \(\mathcal C_\beta^*\):
\begin{align}
|\mathcal C_\beta^*\cap \mathcal C_i^1\setminus \mathcal C_j^1| &\geq a_1^{i, \beta}m-\mathcal O(\epsilon n)\nonumber\\
&= a_1^{i, \beta} m\left(1-\mathcal O\left(\frac{\epsilon}{\theta}\right)\right)\label{lower_bd_ci1}
\end{align}
Note that this holds for any \(\beta\) with large probability due to the lower bound on \(\epsilon\) and \(m > \frac{n}{2}\).
Let us denote by \(\mathcal S(\mathcal C_i^{\leq k})\) the set of all edges contained by a path starting at \(i\) of length at most \(k\).
After additionally conditioning on \(\mathcal C_i^1= \{v_1, \dots, v_{k'}\}\) and \(\mathcal S(\mathcal C_j^{\leq 2})\) as well as \(\mathcal C_{v_2}^1\), ... and \(\mathcal C_{v_{k'}}^1\), the same argument yields for any \(\alpha\)
\begin{align}
 \left| \mathcal C_\alpha^* \cap \mathcal C_{v_{1}}^1 \setminus \left(\mathcal C_j^{\leq 2} \cup ( \mathcal C_{v_2}^1 \cup \cdots \cup \mathcal C_{v_{k'}}^1)\right) \right|
 \geq a_1^{v_1, \alpha} m\left(1-\mathcal O\left(\frac{\epsilon}{\theta}\right)\right).\label{lower_bd_cv1}
\end{align}
Summarizing the lower bound \eqref{lower_bd_cv1} yields in view of \eqref{lower_bd_ci1} and \(\frac{\epsilon}{\theta} \ll 1\)
\begin{align*}
 | \mathcal C_\alpha^* \cap \mathcal C_i^2\setminus \mathcal C_j^{\leq 2}| & \geq a_2^{i, \alpha}  m^2 \left(1-\mathcal O\left(\frac{\epsilon}{\theta}\right)\right).
\end{align*}
Iterating the argument further, we end up with
\begin{align}
  | \mathcal C_\alpha^* \cap \mathcal C_i^{k}\setminus  \mathcal C_j^{\leq k}| & \geq a_{k}^{i,  \alpha}  m^{k} \left(1-\mathcal O\left(\frac{\epsilon}{\theta}\right)\right).
\label{lower_bd_for_intersec_bd}
  \end{align}
From \eqref{upper_bd_for_intersec_bd}, \eqref{lower_bd_for_intersec_bd} and \(\theta^k n^{k-1} \ll 1\) we conclude
\begin{align*}
  |   \mathcal C_\alpha^*\cap \mathcal C_i^{k} \cap  \mathcal C_j^{\leq k}| & \leq a_{k}^{i,  \alpha}  \left(n^k - m^{k}\right) + \mathcal O\left(\epsilon \theta^{k-1}n^k\right) \\
  &= a_k^{i, \alpha} n^k (1 - (1 - \mathcal O(\theta^{k}n^{k-1}))^k) + \mathcal O\left(\epsilon \theta^{k-1}n^k\right)\\
  & \lesssim \theta^{2k} n^{2k-1} + \epsilon \theta^{k-1}n^k.
  \end{align*}
This is of course also an upper bound for \( | \mathcal C_i^{k} \cap \mathcal C_j^{k}\cap \mathcal  C_\alpha^*| \). 
Next, we discuss the overlap of two neighborhoods of different sizes \(|\mathcal C_i^l \cap \mathcal C_j^k \cap \mathcal C_\alpha^*|\) with \(l<k\)
in case there exists no path of length at most \(k-l\) between \(i\) and \(j\).
We can then argue very similar (after conditioning on \( \mathcal S( \mathcal C_j^{\leq k - l  + 1})\)) as in the case of \(l=k\) and start the induction with a lower lower bound analogous to \eqref{lower_bd_ci1}:
\begin{align*}
| \mathcal C_\beta^*\cap  \mathcal C_i^1\setminus \mathcal  C_j^{\leq k - l  + 1}| &\geq a_1^{i, \beta} m\left(1-\mathcal O\left(\frac{\epsilon}{\theta}\right)\right)
\end{align*}
Analogously to \eqref{lower_bd_cv1} and \eqref{lower_bd_for_intersec_bd} we iterate this argument further and end up with
\begin{align*}
|   \mathcal C_\alpha^* \cap \mathcal C_i^l\setminus  \mathcal C_j^{\leq k }| &\geq a_l^{i, \alpha} m^l\left(1-\mathcal O\left(\frac{\epsilon}{\theta}\right)\right).
\end{align*}
Together with the upper bound from Lemma \ref{Lemma_concentration_result_general_pathlength}
\begin{align*}
 |\mathcal C_i^l \cap \mathcal C_\alpha^*| \leq \alpha_l^{i, \alpha} n^l \left( 1+ \mathcal O\left( \frac{\epsilon}{\theta} + \theta^l n^{l-1} \right)\right)
\end{align*}
we conclude 
\begin{align*}
  |\mathcal C_i^l \cap \mathcal C_j^k \cap  \mathcal C_\alpha^*| &\leq \alpha_l^{i, \alpha} \left(n^l - m^l\right) + \mathcal O \left( \epsilon \theta^{l-1} n^l \right)\\
  & \lesssim \theta^{l+k} n^{l+k - 1} + \epsilon \theta^{l-1} n^l.
\end{align*}
\end{proof}

\begin{proof}[Proof of Theorem \ref{thm_consistency_AWCD_general_k}]
 We start discussing the algorithm AWCD$_k$ and consider \(i\neq j\). Let us condition on \(\mathcal S(\mathcal C_i^{\leq k}) \cup \mathcal S(\mathcal C_j^{\leq k})\) and write 
\begin{align*}
 \mathbb E [S_{ij} | \mathcal S(\mathcal C_i^{\leq k}) \cup \mathcal S(\mathcal C_j^{\leq k})] &= \sum_{v_1\in\mathcal C_i^k, v_2\in\mathcal C_j^k} \mathcal Y_{v_1 v_2} \\
 &=D + S,
\end{align*}
where \(D\) denotes the deterministic part of the sum and \(S\) denotes the stochastic part (i.e. an independent sum of Bernoulli variables). Note that a summand \(\mathcal Y_{v_1 v_2}\) is deterministic and \(\equiv 1\) if and only if \(v_1 \in \mathcal C_j^{k-1}\) or  \(v_2 \in \mathcal C_i^{k-1}\). First of all, let us consider the case \(\mathcal Y_{ij} = 0\). According to Lemma \ref{lem_overlap_k_nbhoods} we have w.h.p.
\begin{align}
|\mathcal C_i^{k-1} \cap \mathcal  C_j^{k} \cap \mathcal C_\alpha^*| & \lesssim \theta^{2k-1}n^{2k-2} + \epsilon \theta^{k-2} n^{k-1} \label{upper_bd_overlap_k_k-1}.
\end{align}
Moreover, the 1-neighborhood around any member is of size at most \(\mathcal O(\theta n (1 + \frac{\epsilon}{\theta}) )\), so 
\begin{align*}
D & \lesssim \theta^{2k} n^{2k-1} + \epsilon \theta^{k-1} n^k.
\end{align*}
Next, let us discuss the case where \(\mathcal Y_{ij} = 1\). We start studying the case when we do not correct the bias at all, i.e. with the algorithm AWCD$_k^\circ$. Then the upper bound \eqref{upper_bd_overlap_k_k-1} is no longer valid, instead we get
\begin{align}
|\mathcal C_i^{k-1} \cap \mathcal  C_j^{k} \cap \mathcal C_\beta^*| & \leq |\mathcal C_i^{k-1} \cap \mathcal C_\beta^*| \nonumber \\
& \leq a_{k-1}^{i, \beta} n^{k-1} \left( 1+ \mathcal O \left( \frac{\epsilon}{\theta}\right) \right) \label{upper_bd_overlap_k_k-1_yij_is1}.
\end{align}
Again, using the upper bound \(\theta_v^\alpha n (1 + \mathcal O(\frac{\epsilon}{\theta}))\) on size of the intersection of \(\mathcal C_\alpha^*\) with the 1-neighborhood around any member \(v\) we conclude from \eqref{upper_bd_overlap_k_k-1_yij_is1}
\begin{align*}
D & \leq \sum_\alpha \left( a_{k}^{i, \alpha} + a_{k}^{j, \alpha} \right)  n^k \left( 1+ \mathcal O \left( \frac{\epsilon}{\theta}\right) \right).
\end{align*}
Next, we want to compute a lower bound on \(D\). Note that if \(v_2\in \mathcal C_j^k \cap  \mathcal C_i^{k-1}\), then any neighbor \(v\) of \(v_2\) that is not contained by \(\mathcal C_i^{\leq k-1}\) contributes to a deterministic summand \(\mathcal Y_{v v_2} \equiv 1\) of \(D\). From the proof of Lemma \ref{lem_overlap_k_nbhoods} we already have the lower bound \eqref{lower_bd_for_intersec_bd}
\begin{align}
 |\mathcal C_j^k\cap  \mathcal C_i^{k-1} \cap \mathcal C_\beta^*| &=|\mathcal C_\beta^* \cap \mathcal C_i^{k-1} \setminus \mathcal C_j^{\leq k-1}| \nonumber\\
 &\geq a_{k-1}^{i,  \beta}(n-2K^{k-2}\theta^{k-1}n^{k-1})^{k-1} \left(1-\mathcal O\left(\frac{\epsilon}{\theta}\right)\right)\nonumber\\
 &=a_{k-1}^{i,  \beta}n^{k-1} \left( 1- \mathcal O\left(\theta^{k-1}n^{k-2}\right)\right) \left(1-\mathcal O\left(\frac{\epsilon}{\theta}\right)\right). \label{lower_bd_intersec_k_k-1}
\end{align}
At the same time, any member \(v\) has at least \(\theta_v^\alpha n\left(1-\mathcal O\left(\theta^{k-1}n^{k-2}\right) \right) (1-\mathcal O(\frac{\epsilon}{\theta}))\) neighbors inside \(\mathcal C_\alpha^* \setminus \mathcal C_i^{\leq k-1}\). Combining this with \eqref{lower_bd_intersec_k_k-1} yields
\begin{align*}
 D &\geq \sum_\alpha (a_k^{i, \alpha} + a_k^{j, \alpha})  n^k \left( 1- \mathcal O\left(\theta^{k-1}n^{k-2}\right)\right) \left(1-\mathcal O\left(\frac{\epsilon}{\theta}\right)\right).
\end{align*}
Considering that according to Lemma \ref{Lemma_concentration_result_general_pathlength}
\begin{align*}
|\mathcal C_i^k | + |\mathcal C_j^k| = \sum_\alpha (a_k^{i, \alpha} + a_k^{j, \alpha})  n^k \left( 1+ \mathcal O\left( \frac{\epsilon}{\theta} + \theta^kn^{k-1} \right)\right),
\end{align*}
the deterministic part \(D\) is significantly smaller after we adjust the definition of \(S_{ij}\) to
\begin{align*}
  \sum_{v_1\in\mathcal C_i^k, v_2\in\mathcal C_j^k}\mathcal  Y_{v_1 v_2} - \mathbbm1(\mathcal Y_{ij} = 1)(|\mathcal C_i^k|+\mathcal C_j^k|).
\end{align*}
To be precise, considering again AWCD$_k$, we now have the same bound as in the case \(\mathcal Y_{ij} = 0\)
\begin{align}
 D &=\mathcal O\left( \left(\frac{\epsilon}{\theta} + \theta^k n^{k-1} \right) \theta^k n^k\right)\nonumber \\
 &= \mathcal O\left(\theta^{2k} n^{2k-1} + \epsilon \theta^{k-1}n^k \right) \label{upper_bd_D_general_k}.
\end{align}
Next, let us consider the stochastic part \(S\). Note that if \(v_1\in\mathcal C_i^k \setminus \mathcal C_j^{\leq {k-1}}\) and \(v_2 \in\mathcal C_j^k \setminus \mathcal C_i^{\leq k-1}\), then even after conditioning on \(\mathcal S(\mathcal C_i^{\leq k})\cup \mathcal S(\mathcal C_j^{\leq k})\), the term \(\mathcal Y_{ij}\) is still a Bernoulli variable and not deterministic. In view of 
\begin{align*}
|\mathcal C_\alpha^* \cap \mathcal C_i^k \setminus \mathcal C_j^{\leq k-1}| & =| \mathcal C_i^k \cap \mathcal C_\alpha^* |\left(1 + \mathcal O\left( \frac{|\mathcal C_j^{\leq k-1}\cap \mathcal C_\alpha^* |}{|\mathcal C_i^k\cap \mathcal C_\alpha^* |}\right)\right)\\
&=|\mathcal C_i^k\cap \mathcal C_\alpha^* | \left(1+\mathcal O\left(\frac{1}{\theta n}\right)\right)
\end{align*}
we conclude similar to \eqref{conditional_expectation_S} from Lemma \ref{Lemma_concentration_result_general_pathlength} and Lemma \ref{lem_overlap_k_nbhoods}
\begin{align}
S =& \theta \sum_\alpha |\mathcal C_i^k \cap \mathcal C_\alpha^*| | \mathcal C_j^k \cap  \mathcal C_\alpha^*|   \left(1+\mathcal O\left(\frac{1}{\theta n}\right)\right) \nonumber \\
& +  \rho \sum_{\alpha\neq \beta} |\mathcal C_i^k\cap  \mathcal C_\alpha^*||\mathcal C_j^k\cap  \mathcal C_\beta^*|  \left(1+\mathcal O\left(\frac{1}{\theta n}\right)\right) + \mathcal O\left(\theta |\mathcal C_i^k\cap  \mathcal C_j^k|\nonumber\right)\nonumber \\
 &= \begin{cases}
 an^{2k}\left( 1+ \mathcal O\left( \frac{\epsilon}{\theta} + \theta^k n^{k-1} + \frac{1}{\theta n} \right)\right)   +\mathcal O(\theta^{2k+1} n^{2k-1} + \epsilon \theta^{k}n^k + D)& ,\exists \alpha: i, j\in \mathcal C_\alpha^* \\
 cn^{2k}\left( 1+ \mathcal O\left( \frac{\epsilon}{\theta} + \theta^k n^{k-1} + \frac{1}{\theta n} \right)\right)   +\mathcal O(\theta^{2k+1} n^{2k-1} + \epsilon \theta^{k}n^k + D)&  ,\text{otherwise}
    \end{cases}\nonumber \\
     &= \begin{cases}
 an^{2k} + \mathcal O\left(\theta R\right)& ,\exists \alpha: i, j\in \mathcal C_\alpha^* \\
 cn^{2k}  + \mathcal O\left(\theta R\right)&  ,\text{otherwise}
    \end{cases} \label{concentration_S_general_k}
\end{align}
for
\begin{align*}
a  &=   \theta \left[a_k^2 + (K-1)b_k^2\right]+\rho \left[2(K-1)a_kb_k + (K-1)(K-2)b_k^2\right],\\
c &= \theta \left[2a_kb_k+(K-2)b_k^2\right] +\rho \left[ a_k^2 +2(K-2)a_kb_k + ((K-1)(K-2) + 1)b_k^2 \right] \text{ and}\\
\theta R &= \mathcal O \left( \theta^{2k+1} n^{2k} \left( \frac{\epsilon}{\theta} + \theta^k n^{k-1} + \frac{1}{\theta n} \right)  + \theta^{2k+1}n^{2k-1} + \epsilon \theta^k n^k + \theta^{2k}n^{2k-1} + \epsilon \theta^{k-1} n^k \right)\\
& = \mathcal O\left(\theta^{3k+1}n^{3k-1} + \epsilon \theta^{k-1}n^k  + \epsilon \theta^{2k}n^{2k} \right).
\end{align*}
Note that according to Remark \ref{rmk_difference_explicit_general_k} we have
\begin{align*}
a - c &= \theta a_k^2 + \theta b_k^2 -2 \theta a_k b_k + 2\rho a_k b_k - \rho a_k^2 - \rho b_k^2\\
&= (\theta -  \rho )(a_k - b_k)^2\\
&= (\theta - \rho)^{2k+1}.
\end{align*}
In case \(i=j\) the concentration result \eqref{concentration_S_general_k} on \(S\) as well as the upper bound \eqref{upper_bd_D_general_k} for \(D\) are still valid: After conditioning on \( \mathcal S(\mathcal C_i^{\leq k})\) we have \(D\equiv 0\) as the only deterministic edges are \(\mathcal Y_{vv}=0\) for any \(v\in\mathcal C_i^k\). This leads to an additional term \(\mathcal O(\theta^{k+1}n^k)\) that needs to be considered in \eqref{concentration_S_general_k}, however this is much smaller than \(\mathcal O(\theta R)\). 

Using again Lemma \ref{Lemma_concentration_result_general_pathlength}, we compute for the denominator in case \(i\neq j\)
\begin{align*}
 N_{ij}^k &= \left(\sum_\alpha a_k^{i, \alpha} \right)^2 n^{2k}  \left( 1 + \mathcal O\left(\frac{\epsilon}{\theta} + \theta^k n^{k-1} \right)\right)\\
 &= dn^{2k} + \mathcal O\left(\epsilon \theta^{2k-1} n^k + \theta^{3k} n^{3k-1} \right)\\
 &=dn^{2k} + \mathcal O(R)
\end{align*}
for
\begin{align*}
 d&= a_k + (K-1)b_k.
\end{align*}
In case \(i=j\) the above is still valid: We only need to add a term \(\mathcal O(C_i^k) = \mathcal O(\theta^k n^k)\) which is in view of \(\epsilon \theta^{2k-1} n^{2k} \gg \theta^{2k-1} n^{2k-1} \gg \theta^k n^k\) already contained in \(\mathcal O(R)\). 

The rest of the proof is very similar to the proof of Theorem \ref{thm_original_AWCD}: From the above, we conclude
\begin{align*}
\mathbb E[ \widetilde\theta_{ij}| \mathcal S(\mathcal C_i^{\leq k}) \cup  \mathcal S(\mathcal C_j^{\leq k})] &= \begin{cases}\frac{a}{d}\left(1+\mathcal O(\delta)\right)& ,\exists \alpha: i, j\in \mathcal C_\alpha^* \\
                    \frac{c}{d} \left(1+\mathcal O(\delta)\right)&  ,\text{otherwise}
                    \end{cases}\\
\text{and }\mathbb E[ \widetilde\theta_{i\lor j} |  \mathcal S(\mathcal C_i^{\leq k}) \cup  \mathcal S(\mathcal C_j^{\leq k})] &=\begin{cases} \frac{a}{d}\left(1+\mathcal O(\delta)\right) & ,\exists \alpha: i, j\in \mathcal C_\alpha^*\\
   \frac{a + c}{2d} \left(1+\mathcal O(\delta)\right)&  ,\text{otherwise}
                    \end{cases}
\end{align*}
for
\begin{align*}
\delta & = \frac{R}{\theta^{2k} n^{2k}}\\
\end{align*}
and under the assumption \(\delta \ll 1\), which we discuss later. 
For large enough \(n\) in the case of \(i\) and \(j\) belonging to different communities we have
 \begin{align}
\mathbb E[ \widetilde\theta_{i\lor j} -\widetilde \theta_{ij} |  \mathcal S(\mathcal C_i^{\leq k}) \cup  \mathcal S(\mathcal C_j^{\leq k})]&= \frac{a+c}{2d}\left(1+\mathcal O(\delta)\right) - \frac{c}{d}\left(1+\mathcal O(\delta)\right)\nonumber\\
  &=\frac{a-c}{2d}  + \mathcal O(\delta \theta) \nonumber\\
  & \propto  \frac{(\theta-p)^{2k+1}}{\theta^{2k} }  + \mathcal O(\delta \theta) \label{ineq_diff_lower_bd_simplified_general_l}
 \end{align}
and otherwise
\begin{align}
\mathbb E[ \widetilde\theta_{i\lor j} - \widetilde \theta_{ij} | \mathcal S(\mathcal C_i^{\leq k}) \cup  \mathcal S(\mathcal C_j^{\leq k})]  &= \frac{a}{d}\left(1+\mathcal O(\delta)\right) - \frac{a}{d}\left(1+\mathcal O(\delta) \right)\nonumber\\
  &=\mathcal O\left(\delta \theta\right). \label{ineq_diff_upper_bd_simplified_general_k}
\end{align}
Next we apply Lemma \ref{lem_bernstein}: Considering that the sum \(S_{ij}\) consists of \(\mathcal O(\theta^{2k}n^{2k})\) summands, we have with large probability
\begin{align*}
 |S_{ij} - \mathbb E[S_{ij} |  \mathcal S(\mathcal C_i^{\leq k}) \cup  \mathcal S(\mathcal C_j^{\leq k})]| & \lesssim \log n + (\log n)^\frac{1}{2} \theta^{k+\frac{1}{2}}n^k.
\end{align*}
The upper bound simplifies under the condition \(\theta^{2k+1}n^{2k} \gtrsim \log n\) to 
\begin{align*}
 |S_{ij} - \mathbb E[S_{ij} |  \mathcal S(\mathcal C_i^{\leq k}) \cup  \mathcal S(\mathcal C_j^{\leq k})]| & \lesssim (\log n)^\frac{1}{2} \theta^{k+\frac{1}{2}}n^k,
\end{align*}
implying
\begin{align}
  |\widetilde \theta_{ij} - \mathbb E[\widetilde\theta_{ij} |  \mathcal S(\mathcal C_i^{\leq k}) \cup  \mathcal S(\mathcal C_j^{\leq k})]| & \lesssim \frac{(\log n)^\frac{1}{2}}{\theta^{k-\frac{1}{2}}n^k}. \label{upper_bd_stochastic_uncertainty_general_k}
\end{align}
 As \(\theta^{2k+1}n^{2k} \gg \log n\), the upper bound is \(\ll \theta\), implying furthermore
 \begin{align*}
\widetilde\theta_{ij}, \widetilde\theta_{i\lor j} \in \left[\frac{\rho}{2}, \theta + \frac{\rho}{2}\right].
 \end{align*}
Consequently, the corresponding Fisher information is up to bounded constants given by \(\theta^{-1}\). 
Using the quadratic Taylor expansion of the Kullbach-Leibler, we conclude in the case where \(i\) and \(j\) belong to different communities from \eqref{ineq_diff_lower_bd_simplified_general_l} and \eqref{upper_bd_stochastic_uncertainty_general_k}
\begin{align}
\theta^{\frac{1}{2}}\mathcal K^\frac{1}{2}(\widetilde\theta_{ij}, \widetilde\theta_{i\lor j}) \gtrsim  &  |\widetilde \theta_{ij}- \widetilde \theta_{i\lor j}| \nonumber\\
\geq & \left|\mathbb E[\widetilde \theta_{ij} |  \mathcal S(\mathcal C_i^{\leq k}) \cup  \mathcal S( \mathcal C_j^{\leq k})]  -\mathbb E[\widetilde \theta_{i\lor j} |  \mathcal S(\mathcal C_i^{\leq k}) \cup  \mathcal S( \mathcal C_j^{\leq k})]\right| \nonumber\\
& -\left\vert \widetilde\theta_{ij} - \mathbb E[ \widetilde\theta_{ij} |  \mathcal S(\mathcal C_i^{\leq k}) \cup  \mathcal S( \mathcal C_j^{\leq k})]\right\vert \nonumber\\
&- \left\vert \widetilde\theta_{i\lor j} - \mathbb E[ \widetilde\theta_{i\lor j} | \mathcal S(\mathcal C_i^{\leq k}) \cup  \mathcal S( \mathcal C_j^{\leq k})]\right\vert\nonumber\\
\gtrsim & \frac{(\theta - \rho)^{2k+1}}{\theta^{2k}} + \mathcal O\left(\delta \theta +  \frac{(\log n)^\frac{1}{2}}{\theta^{k-\frac{1}{2}}n^k}\right), \label{KL_final_lower_bd_general_k}
\end{align}
whereas in the other case we conclude analogously from \eqref{ineq_diff_upper_bd_simplified_general_k}
\begin{align}
\theta^{\frac{1}{2}}\mathcal K^\frac{1}{2}(\widetilde\theta_{ij}, \widetilde\theta_{i\lor j})&\lesssim \delta \theta +  \frac{(\log n)^\frac{1}{2}}{\theta^{k-\frac{1}{2}}n^k}. \label{KL_final_upper_bd_general_k}
\end{align}
From \eqref{KL_final_upper_bd_general_k} and \eqref{KL_final_lower_bd_general_k} we conclude: A sufficient condition (together with the lower bound from Lemma \ref{Lemma_concentration_result_general_pathlength} on \(\epsilon\) that guarantees the large probability of the concentration results) for concistency of the algorithm is
\begin{align}
 \frac{(\theta - \rho )^{2k+1}}{\theta^{2k}} &\gg 
  \delta \theta +  \frac{(\log n)^\frac{1}{2}}{\theta^{k-\frac{1}{2}}n^k}  \nonumber \\
  \Leftrightarrow (\theta -  \rho )^{2k+1} &\gg \theta Rn^{-2k} + (\log n)^\frac{1}{2} \theta^{k + \frac{1}{2}}n^-k\nonumber\\
\Leftrightarrow (\theta - \rho)^{2k+1} &\gg \theta^{3k+1}n^{k-1} + \epsilon \theta^{k-1}n^{-k}  + \epsilon \theta^{2k} + (\log n)^\frac{1}{2} \theta^{k + \frac{1}{2}}n^{-k}. \label{condition1_general_k}
\end{align}
At the same time, considering \(\theta \gg \frac{\log n}{n}\), all of the above concentration results above only hold with large probability as long as \(\epsilon\) satisfies the lower bound (c.f. Lemma \ref{lem_bernstein})
\begin{align}
 \min \left\{ \frac{\epsilon^2 n}{\theta \log n}, \frac{\epsilon n}{\log n} \right\}  &\gg 1 \nonumber\\
 \Leftrightarrow \epsilon &\gg \frac{\log n}{n} + \sqrt{\frac{\theta \log n}{n}}\nonumber\\
  \Leftrightarrow \epsilon &\gg  \sqrt{\frac{\theta \log n}{n}},\label{condition2_general_k}
\end{align}
whereas the \(\epsilon\)-dependent part of \eqref{condition1_general_k} is equivalent to the upper bound
\begin{align}
\epsilon &\ll \min\left\{\frac{(\theta - \rho)^{2k+1}}{\theta^{2k}}, \frac{(\theta - \rho)^{2k+1}n^k}{\theta^{k-1} } \right\}. \label{condition3_general_k}
\end{align}
An \(\epsilon> 0\) satisfying \eqref{condition2_general_k} as well as \eqref{condition3_general_k} exists if and only if
\begin{align}
\min\left\{\frac{(\theta - \rho)^{2k+1}}{\theta^{2k}}, \frac{(\theta - \rho)^{2k+1}n^k}{\theta^{k-1} } \right\} & \gg \sqrt{\frac{\theta \log n}{n}}  \nonumber\\
\Leftrightarrow (\theta - \rho)^{2k+1} &\gg \theta^{2k+\frac{1}{2}}n^{-\frac{1}{2}} (\log n)^\frac{1}{2} + \theta^{k-\frac{1}{2}} n^{-k-\frac{1}{2}} (\log n)^\frac{1}{2}. \label{condition4_general_k}
\end{align}
Because we only discuss the case \(k\geq 2\), our assumptions ensure \(\theta \ll n^{-\frac{k-1}{k}} \ll n^{-\frac{1}{2}}\) and in particular \(\theta^{k-\frac{1}{2}}n^{-k-\frac{1}{2}} (\log n)^\frac{1}{2} \ll \theta^{k+\frac{1}{2}}n^{-k}(\log n)^\frac{1}{2}\). Thus we can simplify \eqref{condition1_general_k} and \eqref{condition4_general_k} into the following sufficient condition for consistency of the algorithm
\begin{align*}
 \theta - \rho & \gg \max \{A, B, C\}\\
 \end{align*}
for
\begin{align*}
A &= \theta^\frac{3k+1}{2k+1} n^\frac{k-1}{2k+1},\\
B &= \theta^\frac{4k+1}{4k+2}n^{-\frac{1}{4k+2}} (\log n)^\frac{1}{4k+2}\text{ and}\\
C&= \theta^\frac{2k-1}{4k+2}n^{-\frac{1}{2}} (\log n)^\frac{1}{4k+2}.
\end{align*}

Next, let us consider the algorithm AWCD\(^+\). The proof is almost identical - we only need to modify \(D= 0\), leading to
\begin{align*}
 \theta R = \mathcal O(\theta^{3k+1}n^{3k-1}+\epsilon \theta^{2k} n^{2k}).
\end{align*}
Consequently, in the lower bound \eqref{condition1_general_k} we can drop the term \(\epsilon \theta^{k-1} n^{-k}\) leading to the following analogous condition
\begin{align*}
 (\theta - \rho)^{2k+1} &\gg \theta^{3k+1}n^{k-1}  + \epsilon \theta^{2k} + (\log n)^\frac{1}{2} \theta^{k + \frac{1}{2}}n^{-k}
\end{align*}
and \eqref{condition4_general_k} simplifies to
\begin{align*}
 (\theta - \rho)^{2k+1} &\gg \theta^{2k+\frac{1}{2}}n^{-\frac{1}{2}} (\log n)^\frac{1}{2}.
\end{align*}
We end up with the final sufficient condition
\begin{align*}
\theta -\rho \gg \max\{A, B, C^+\}
\end{align*}
for
\begin{align*}
C^+&= \theta^\frac{1}{2}n^{-\frac{k}{2k+1}} (\log n)^\frac{1}{4k+2}.
\end{align*}

Finally, let us consider the algorithm AWCD$_k^\circ$ without any correction of the bias term. The deterministic part of the conditional expectation of \(S_{ij}\) increases to \(D=\mathcal O(\theta^kn^k)\) implying
\begin{align*}
 \theta R = \mathcal O(\theta^{3k+1}n^{3k-1}+\epsilon \theta^{2k} n^{2k} + \theta^kn^k).
\end{align*}
We end up with the following lower bound corresponding to \eqref{condition1_general_k}
\begin{align*}
(\theta-\rho)^{2k+1} & \gg \theta^{3k+1}n^{k-1} + \epsilon \theta^{2k} + \theta^k n^{-k} + (\log n)^\frac{1}{2} \theta^{k+\frac{1}{2} }n^{-k}\\
\Leftrightarrow (\theta-\rho)^{2k+1} & \gg \theta^{3k+1}n^{k-1} + \epsilon \theta^{2k} + \theta^k n^{-k} 
\end{align*}
and instead of \eqref{condition4_general_k} we have again \begin{align*}
 (\theta - \rho)^{2k+1} &\gg \theta^{2k+\frac{1}{2}}n^{-\frac{1}{2}} (\log n)^\frac{1}{2}.
\end{align*}
This leads to the final sufficient condition
\begin{align*}
\theta - \rho & \gg \max\{A, B, C^\circ\}
\end{align*}
with
\begin{align*}
 C^\circ &= \theta^\frac{k}{2k+1}n^{-\frac{k}{2k+1}}.
\end{align*}

\end{proof}


\begin{proof}[Proof of Proposition \ref{Proposition_different_block_size}]
We proceed analogously as in the case of identical block size. Suppose \(\epsilon > 0\) and \(\alpha\in\{1, 2, \dots, K\}\). Moreover, we use the notation \(\alpha_i\) for a member \(i\) to denote the corresponding community index such that \(i\in\mathcal C_{\alpha_i}^*\). According to Lemma \ref{lem_bernstein_homogeneous} we have
\begin{align} |\mathcal C_i^j\cap \mathcal C^*_\alpha| = \theta_i^\alpha n_\alpha + \mathcal O\left(\epsilon n_{\max} + \theta \right)   \label{dbs_eqn_size_one_nbhd}\end{align}
for \begin{align}\theta_i^\alpha = \begin{cases}
                         \theta &, i \in \mathcal C^*_\alpha\\
                         \rho &, i\notin \mathcal C^*_\alpha
                        \end{cases}
\nonumber
\end{align}
on an event of probability at least
\[1 - 2\exp \left( - \frac{\epsilon^2 n_{\min}}{2\theta + \epsilon}\right)\text{.}\]
Similarly, we conclude for \(i\neq j\)
\begin{align}
|\mathcal C_i^j \cap \mathcal C_j^j|= \mathcal O(\theta^2 n_{\max} + \epsilon n_{\max}) \label{dbs_eqn_size_overlap} 
\end{align}
on an event of probability of at least the same lower bound. In the following, we will restrict to an event where \eqref{dbs_eqn_size_one_nbhd} and \eqref{dbs_eqn_size_overlap} are satisfied for any \(i \neq j\). By union bound, this event of asymptotic probability 1 as long as
\begin{align}
 \min \left\{ \frac{\epsilon^2 n_{\min}}{\theta}, \epsilon n_{\min} \right\} \gg \log n_{\min}.
 \nonumber
\end{align}
Before moving on, we introduce the following notation:
\begin{align*}
 T &= \sum_{\alpha \notin \{\alpha_i, \alpha_j\}} n_\alpha\\
a_i^\circ &= \theta^3 n_i^2 + \theta \rho^2(n_j^2 + 2n_i n_j) \\
 c_{ij}^\circ &= \theta^2 \rho (n_i^2+n_j^2+n_in_j)+ \rho^3 n_i n _j\\
 r_i &= \theta \rho^2 (\sum_{\alpha\notin\{\alpha_i, \alpha_j\}} n_\alpha^2 +2n_i T) +2 \rho^3 n_j T + \rho^3\sum_{\alpha\neq \beta, \alpha, \beta \notin
 \{\alpha_i, \alpha_j\} } n_\alpha n_\beta\\
 r_{ij} &=  \theta \rho^2 \left(\sum_{\alpha \notin \{\alpha_i, \alpha_j\}} n_\alpha^2 + (n_i+n_j) T\right) + \rho^3 (n_i + n_j) T + \rho^3\sum_{\alpha\neq \beta, \alpha, \beta \notin
 \{\alpha_i, \alpha_j\} } n_\alpha n_\beta
\end{align*}
Note that according to our assumptions \(\theta n_{\min} \to \infty\). We conclude from \eqref{dbs_eqn_size_one_nbhd} and \eqref{dbs_eqn_size_overlap} for \(\epsilon < \theta\)
\begin{align}
 \mathbb E [S_{ij}|\mathcal C_i^j, \mathcal C_j^i] 
 &= \theta \sum_\alpha |\mathcal C_i^j \cap \mathcal C_\alpha^*| |C_j^i \cap \mathcal C_\alpha^*| + \rho \sum_{\alpha\neq \beta} |\mathcal C_i^j\cap\mathcal C_\alpha^*||\mathcal C_j^i\cap\mathcal C_\alpha^*| + \mathcal O(\theta |\mathcal C_i^j\cap\mathcal C_j^i|\nonumber)\\
 &=\theta\sum_\alpha \theta_i^\alpha \theta_j^\alpha n_\alpha^2 + \rho \sum_{\alpha\neq\beta} \theta_i^\alpha \theta_j^\beta n_\alpha n_\beta +\mathcal O(\theta^3 n_{\max}+ \theta^2 \epsilon n_{\max}^2)\nonumber \\
 &= \begin{cases}
 a_i   +\mathcal O(\theta^3 n_{\max}+ \theta^2 \epsilon n_{\max}^2)& ,\exists \alpha: i, j\in \mathcal C_\alpha^*\\
    c_{ij} + \mathcal O(\theta^3 n_{\max} + \theta^2 \epsilon n_{\max}^2) & ,\text{otherwise}
    \end{cases}\nonumber 
\end{align}
for 
\begin{align*}
 a_i &\defined a_i^\circ + r_i\text{ and}\\
c_{ij} &\defined c_{ij}^\circ + r_{ij}. 
\end{align*}
Similarly 
\begin{align*}
 \mathbb E [S_{ii}|\mathcal C_i] &= a_i + \mathcal O(\theta^2 n_{\max} + \theta^2 \epsilon n_{\max}^2)\nonumber
\end{align*}
as well as
\begin{align*}
\mathbb E[N_{ij}|\mathcal C_i, \mathcal C_j]
&= N_{ij}\nonumber \\
&= \left(\sum_\alpha |\mathcal C_i\cap \mathcal C_\alpha^*|\right)\left(\sum_\alpha |\mathcal C_j\cap \mathcal C_\alpha^*|\right)\nonumber\\
&= d_{ij}  + \mathcal O(\theta^2 n_{\max} + \epsilon \theta n_{\max}^2)\nonumber
\end{align*}
for
\begin{align*}
d_{ij} &= (N_i^\circ + \rho T)(N_j^\circ + \rho T),\\
N_i^\circ &= \theta n_i + \rho n_j \text{ and}\\
N_j^\circ &= \rho n_i + \theta n_j.
\end{align*}
Analogously as before, we introduce \begin{align}
    R\defined \theta n_{\max} + \epsilon \theta n_{\max}^2\nonumber
    \end{align}
 and get
\begin{align*}
\mathbb E[ \widetilde \theta_{ii}|\mathcal C_i] &=  \frac{a_i +\mathcal O\left(\theta R \right)}{d_{ii} +\mathcal O\left(R\right)}.
\end{align*}
For \begin{align}
  \delta\defined \max\left\{\frac{1}{\theta n_{\max}}, \frac{\epsilon}{\theta} \right\}\nonumber
    \end{align}
 and under the assumption \(\delta \ll 1\) we can rewrite 
\begin{align*}
\mathbb E[ \widetilde \theta_{ii}|\mathcal C_i]&=\frac{a_i}{d_{ii}}\left(1+\mathcal O(\delta)\right)
\end{align*}
and analogously 
\begin{align*}
\mathbb E[ \widetilde \theta_{ij}|\mathcal C_i, \mathcal C_j] &= \begin{cases}\frac{a_i}{d_{ii}}\left(1+\mathcal O(\delta)\right)& ,\exists \alpha: i, j\in \mathcal C_\alpha^* \\
                    \frac{c_{ij}}{d_{ij}} \left(1+\mathcal O(\delta)\right)&  ,\text{otherwise.}
                    \end{cases}
\end{align*}
Let us consider the case \(i\in\alpha_i\). In this case, we have
\begin{align*}
 \mathbb E[ \widetilde \theta_{ii} - \widetilde \theta_{ij} | \mathcal C_i, \mathcal C_j]  &= \frac{a_i}{d_{ii}}\left(1+\mathcal O(\delta)\right) - \frac{a_i}{d_{ii}}\left(1+\mathcal O(\delta) \right)\nonumber\\
  &=\mathcal O\left(\delta \theta\right).\nonumber
\end{align*}
Since \(\widetilde \theta_{i\lor j}\) is the weighted mean of \(\widetilde \theta_{ii}\), \(\widetilde \theta_{jj}\) and \(\widetilde \theta_{ij}\), we conclude also
\begin{align}
\mathbb E[ \widetilde \theta_{i\lor j} - \widetilde \theta_{ij} | \mathcal C_i, \mathcal C_j]  &= \mathcal O\left(\delta \theta\right). \label{dbs_ineq_diff_upper_bd_simplified}
\end{align}
Next, we consider the other case \(i\notin\alpha_i\). First, we need to calculate a lower bound for the difference \(\frac{a_i}{d_{ii}}-\frac{c_i}{d_{ij}}\). We have
\begin{align}
\frac{a_i}{d_{ii}}-\frac{c_i}{d_{ij}} &= \frac{a_i^\circ + r_i}{(N_i^\circ + \rho T)^2}-\frac{c_{ij}^\circ + r_{ij}}{(N_i^\circ + \rho T)(N_j^\circ + \rho T)} \nonumber\\
&= \frac{1}{(N_i^\circ + \rho T)^2(N_j^\circ + \rho T)} \left(A_{ij} + B_{ij}+C_{ij} \right)\nonumber
\end{align}
for
\begin{align*}
A_{ij} &= a_i^\circ N_j^\circ - c_{ij}^\circ N_i^\circ , \\
B_{ij} &=\rho T(a_i^\circ - c_{ij}^\circ)\text{ and}\\
C_{ij} &= r_i(N_j^\circ - N_i^\circ) +(r_i - r_{ij}) (N_i^\circ + \rho T).\\
\end{align*}
We have 
\begin{align*}
 r_i-r_{ij} &= \theta \rho^2 (n_i - n_j) T + \rho^3 (n_j - n_i ) T\\
 &= \rho^2 T(\theta n_i + \rho n_j - (\theta n_j +\rho n_i))\\
 &= \rho^2T(N_i^\circ - N_j^\circ).
\end{align*}
Consequently,
\begin{align*}
C_{ij} &= (N_i^\circ - N_j^\circ) (\rho^2T N_i^\circ + \rho^3 T^2 - r_i)\\
 &= -(N_i^\circ - N_j^\circ) \left((\theta - \rho) \rho^2 \sum_{\alpha\notin\{\alpha_i, \alpha_j\}} n_\alpha^2 + \theta \rho^2 n_i T + \rho^3 n_j T\right)\\
 &= -(N_i^\circ - N_j^\circ) \left((\theta - \rho) \rho^2 \sum_{\alpha\notin\{\alpha_i, \alpha_j\}} n_\alpha^2 + \rho^2 N_i^\circ T \right)
\end{align*}
and
\begin{align*}
 C_{ij} + C_{ji} &= (N_i^\circ - N_j^\circ) (-\rho^2 N_i^\circ T + \rho^2 N_j^\circ T)\\
 &=-(N_i^\circ - N_j^\circ)^2 \rho^2 T\\
  &= - (\theta - \rho)^2 (n_i - n_j)^2 \rho^2 T.
\end{align*}
Moreover,
\begin{align*}
 B_{ij} + B_{ji} &= \rho T(a_i^\circ + a_j^\circ - 2c_{ij}^\circ)\\
 &= \rho  T\left[\theta^3 (n_i^2+n_j^2)+\theta \rho^2 (n_i^2 + n_j^2 + 4n_i n_j) - 2 \theta^2 \rho(n_i^2 + n_j^2 + n_i n_j) - 2 \rho^3 n_i n_j\right]\\
 &=\rho T\left[\rho (\theta - \rho )^2 (n_i - n_j)^2 + (\theta - \rho )^3 (n_i^2 + n_j^2)\right].
\end{align*}
The first term \(A_{ij}\) simplifies to
\begin{align*}
 A_{ij} &= a_i^\circ N_j^\circ - c_{ij}^\circ N_i^\circ \\
 &= [ \theta^3 n_i^2 + \theta \rho^2(n_j^2 + 2n_i n_j) ] (\rho n_i + \theta n_j) - [ \theta^2 \rho (n_i^2+n_j^2+n_in_j)+ \rho^3 n_i n _j] (\theta n_i + \rho n_j)\\
 &=\theta^4 n_i^2 n_j + \theta^2 \rho^2 n_i n_j^2 + \theta \rho^3 n_i n_j^2 +\theta \rho^3 n_i^2 n_j - \theta^3 \rho n_i n_j^2 - \theta^3 \rho n_i^2 n_j - \theta^2 \rho^2 n_i^2 n_j- \rho^4 n_i n_j^2\\
 &=n_i n_j^2 \underbrace{(\theta ^4 - 2\theta^3 \rho + 2\theta \rho^3 - \rho^4)}_{=(\theta + \rho)(\theta - \rho)^3} + n_i n_j (n_i - n_j)\underbrace{(\theta^4 - \theta^3 \rho- \theta^2 \rho^2 + \theta \rho^3)}_{=\theta (\theta + \rho )(\theta - \rho)^2}\\
 &= n_i n_j^2 (\theta + \rho) (\theta - \rho)^3 + D_{ij}
\end{align*}
for 
\begin{align*}
 D_{ij} &= (n_i - n_j) n_i n_j \theta  (\theta + \rho) (\theta - \rho) ^2.
\end{align*}
Note that
\begin{align*}
\underbrace{D_{ij} + D_{ji}}_{=0} + B_{ij} + B_{ji} + C_{ij} + C_{ji} &= \rho(\theta - \rho)^3 (n_i^2 + n_j^2) T.\\
\end{align*}
So w.l.o.g. we can assume \[D_{ij} + B_{ij} + C_{ij} \geq  \frac{1}{2} \rho(\theta - \rho)^3 (n_i^2 + n_j^2) T, \] implying
\begin{align*}
A_{ij}+B_{ij}+C_{ij} > \frac{1}{2} \rho(\theta - \rho)^3  (n_i^2 + n_j^2) T.
\end{align*}
We conclude w.l.o.g. (possibly after exchanging the indices \(i\) and \(j\)) 
\begin{align*}
 \frac{a_i}{d_{ii}} - \frac{c_i}{d_{ij}} &\gtrsim \frac{(\theta - \rho)^3}{\theta^2} \frac{n_{\min}^2}{n_{\max}^2}
\end{align*}
and thus
\begin{align*}
 \mathbb E[ \widetilde \theta_{ii} - \widetilde \theta_{ij} | \mathcal C_i, \mathcal C_j]  &= \frac{a_i}{d_{ii}}\left(1+\mathcal O(\delta)\right) - \frac{c_{ij}}{d_{ij}}\left(1+\mathcal O(\delta) \right)\nonumber\\
  & \gtrsim \frac{(\theta - \rho)^3}{\theta^2} \frac{n_{\min}^2}{n_{\max}^2} + \mathcal O(\delta \theta).
\end{align*}
Since \(\widetilde \theta_{i\lor j}\) is the weighted mean of \(\widetilde \theta_{ii}\), \(\widetilde \theta_{jj}\) and \(\widetilde \theta_{ij}\), there exists indices \((i', j')\in \{(i, i), (j, j), (i, j)\}\) such that
\begin{align}
\mathbb E[ \widetilde \theta_{i\lor j} - \widetilde\theta_{i'j'} | \mathcal C_i, \mathcal C_j]  &\gtrsim \frac{(\theta - \rho)^3}{\theta^2} \frac{n_{\min}^2}{n_{\max}^2} + \mathcal O(\delta \theta).\label{dbs_ineq_diff_lower_bd_simplified}
\end{align}
Similar as in the proof of Theorem \ref{thm_original_AWCD}, we deduce from our assumptions \(\theta^3 n_{\min}^2 \gg \log n_{\min}\) and \(\theta \leq \frac{1}{2}\) as well as Lemma \ref{lem_bernstein} that
\begin{itemize}
 \item the estimates \(\widetilde \theta_{\cdot}\) (including \(\widetilde \theta_{ii}\), \(\widetilde \theta_{jj}\), \(\widetilde \theta_{ij}\) and \(\widetilde \theta_{i\lor j}\)) are with large probability inside an intervall of the form \([\frac{\rho}{2},  \theta+\frac{\rho}{2}]\), implying that the corresponding Fisher information of a Bernoulli variable with such a mean is up to bounded constants given by \(\theta^{-1}\) and
 \item with large probability, \begin{align}
 |\widetilde \theta_{\cdot} -   \mathbb E [\widetilde \theta_{\cdot}|\mathcal C_i, \mathcal C_j] |&\lesssim (\log n_{\min})^\frac{1}{2} \theta^{-\frac{1}{2}} n_{\min}^{-1}. \nonumber
\end{align}
\end{itemize}
Combining the above with the quadratic Taylor expansion of the Kullbach-Leibler and \eqref{dbs_ineq_diff_lower_bd_simplified}, we conclude in the case where \(i\) and \(j\) belong to different communities 
\begin{align}
\theta^{\frac{1}{2}}\mathcal K^\frac{1}{2}(\widetilde \theta_{i'j'}, \widetilde\theta_{i\lor j})&\gtrsim    |\widetilde \theta_{i'j'}- \widetilde \theta_{i\lor j}| \nonumber\\
&\geq \left|\mathbb E[\widetilde\theta_{i'j'} | \mathcal C_i, \mathcal C_j]-\mathbb E[\widetilde\theta_{i\lor j} | \mathcal C_i, \mathcal C_j]\right| -\left\vert\widetilde\theta_{i'j'} - \mathbb E[\widetilde\theta_{i'j'} | \mathcal C_i, \mathcal C_j]\right\vert - \left\vert\widetilde\theta_{i\lor j} - \mathbb E[\widetilde\theta_{i\lor j} | \mathcal C_i, \mathcal C_j]\right\vert\nonumber\\
&\gtrsim  \frac{(\theta - \rho)^3}{\theta^2} \frac{n_{\min}^2}{n_{\max}^2} + \mathcal O\left(\delta \theta + (\log n_{\min})^\frac{1}{2}\theta^{-\frac{1}{2}} n_{\min}^{-1}\right),\nonumber
\end{align}
whereas in the other case we conclude analogously from \eqref{dbs_ineq_diff_upper_bd_simplified}
\begin{align}
\theta^{\frac{1}{2}}\mathcal K^\frac{1}{2}(\widetilde\theta_{\cdot}, \widetilde\theta_{i\lor j})&\lesssim \delta\theta + (\log n_{\min})^\frac{1}{2}\theta^{-\frac{1}{2}} n_{\min}^{-1}.\nonumber
\end{align}
To ensure that the Kullback-Leibler divergence in the second case is significantly smaller than in the first case, it will suffice to check 
\begin{align}
 \delta \theta&\ll \frac{(\theta - \rho)^3}{\theta^2} \frac{n_{\min}^2}{n_{\max}^2}\nonumber\\
\Leftrightarrow \qquad \max\left\{\frac{1}{\theta n_{\max}}, \frac{\epsilon}{\theta} \right\}&\ll \left(\frac{\theta-\rho}{\theta}\right)^3 \frac{n_{\min}^2}{n_{\max}^2}\label{dbs_condition1}
\end{align}
and
\begin{align}
 \sqrt{\frac{\log n_{\min}}{\theta n_{\min}^2}} &\ll \frac{(\theta - \rho)^3}{\theta^2} \frac{n_{\min}^2}{n_{\max}^2}. \label{dbs_condition2}
\end{align}
Those are all conditions in the proof that remain to be checked except the large probability assumption 
\begin{align}
 \min \left\{ \frac{\epsilon^2 n_{\min}}{\theta \log n_{\min}}, \frac{\epsilon n_{\min}}{\log n_{\min}} \right\} \gg 1. \label{dbs_condition3}
\end{align}
An \(\epsilon > 0\) satisfying \eqref{dbs_condition1} and \eqref{dbs_condition3} exists as long as 
\begin{align}
 \frac{\log n_{\min}}{n_{\min}} + \sqrt{\frac{\theta \log n_{\min}}{n_{\min}}} \ll \frac{(\theta - \rho )^3}{\theta^2} \frac{n_{\min}^2}{n_{\max}^2},\label{dbs_condition21}
\end{align}
whereas the \(\epsilon\)-independent part of conditions \eqref{dbs_condition1} and \eqref{dbs_condition2} can be summarized by 
\begin{align}
\frac{1}{n_{\max}} + \sqrt{\frac{\log n_{\min}}{\theta n_{\min}^2}} \ll \frac{(\theta - \rho)^3}{\theta^2} \frac{n_{\min}^2}{n_{\max}^2}. \label{dbs_condition22}
\end{align}
By simple calculus, we can verify that our assumption
\[\theta - \rho\gg \left(\frac{n_{\max}}{n_{\min}}\right)^\frac{2}{3}\max\{n_{\min}^{-\frac{1}{3}} \theta^\frac{1}{2}(\log n_{\min})^\frac{1}{6},   n_{\min}^{-\frac{1}{6}} \theta^\frac{5}{6} (\log n_{\min})^\frac{1}{6}\}\]
implies conditions \eqref{dbs_condition21} and \eqref{dbs_condition22}. Note that \(d_{ii}\), \(d_{jj}\) and \(d_{ij}\) only differ by bounded factors. Taking into account the concentration result \(N_{\cdot} = d_{\cdot}(1+\mathcal O(\delta))\) with \(\delta \ll 1\), this also ensures consistency of the test - provided a proper threshold is given. 
\end{proof}


\begin{proof}[Proof of Proposition \ref{Proposition_different_block_size_and_parameter}]
We proceed analogously as previously. Suppose \(\epsilon > 0\) and \(\alpha\in\{1, 2\}\). According to Lemma \ref{lem_bernstein_homogeneous} we have
\begin{align} |\mathcal C_i^j\cap \mathcal C^*_\alpha| = \theta_i^\alpha n_\alpha + \mathcal O\left(\epsilon n_{\max} + \theta_{\max} \right)   \label{dp_eqn_size_one_nbhd}\end{align}
for \begin{align}\theta_i^\alpha = \begin{cases}
                         \theta_\alpha &, i \in \mathcal C^*_\alpha\\
                         \rho &, i\notin \mathcal C^*_\alpha
                        \end{cases}
\nonumber
\end{align}
on an event of probability at least
\[1 - 2\exp \left( - \frac{\epsilon^2 (n_{\min} - 2)}{2\theta_{\max} + \epsilon}\right)\text{.}\]
Similarly, we conclude for \(i\neq j\)
\begin{align}
|\mathcal C_i^j \cap \mathcal C_j^j|= \mathcal O(\theta_{\max}^2 n_{\max} + \epsilon n_{\max}) \label{dp_eqn_size_overlap} 
\end{align}
on an event of probability of at least the same lower bound. In the following, we will restrict to an event where \eqref{dp_eqn_size_one_nbhd} and \eqref{dp_eqn_size_overlap} are satisfied for any \(i \neq j\). By union bound, this event of asymptotic probability 1 as long as
\begin{align}
 \min \left\{ \frac{\epsilon^2 n_{\min}}{\theta_{\max}}, \epsilon n_{\min} \right\} \gg \log n_{\min}.
\nonumber 
\end{align}
Note that according to our assumptions \(\theta_{\min} n_{\min}\to \infty\). We conclude from \eqref{dp_eqn_size_one_nbhd} and \eqref{dp_eqn_size_overlap} for \(\epsilon < \theta_{\min}\)
\begin{align}
 \mathbb E [S_{ij}|\mathcal C_i^j, \mathcal C_j^i] 
 &= \sum_\alpha \theta_\alpha |\mathcal C_i^j \cap  \mathcal C_\alpha^*| | \mathcal C_j^i \cap \mathcal C_\alpha^*| + \rho \sum_{\alpha\neq \beta} |\mathcal C_i^j\cap \mathcal C_\alpha^*||\mathcal C_j^i\cap \mathcal C_\alpha^*| + \mathcal O(\theta_{\max} |\mathcal C_i^j\cap \mathcal C_j^i|\nonumber)\\
 &=\sum_\alpha \theta_\alpha \theta_i^\alpha \theta_j^\alpha n_\alpha^2 + p \sum_{\alpha\neq\beta} \theta_i^\alpha \theta_j^\beta n_\alpha n_\beta +\mathcal O(\theta_{\max}^3 n_{\max}+ \theta_{\max}^2 \epsilon n_{\max}^2)\nonumber \\
 &= \begin{cases}
 a_{11}   +\mathcal O(\theta^3 n_{\max}+ \theta^2 \epsilon n_{\max}^2)&, i, j\in \mathcal C_1^*\\
 a_{22}   +\mathcal O(\theta^3 n_{\max}+ \theta^2 \epsilon n_{\max}^2)&, i, j\in \mathcal C_2^*\\
a_{12} + \mathcal O(\theta^3 n_{\max} + \theta^2 \epsilon n_{\max}^2) & ,\text{otherwise}
    \end{cases} \nonumber
\end{align}
for 
\begin{align*}
 a_{11} &\defined \theta_1^3 n_1^2+ 2\theta_1 \rho^2 n_1 n_2 + \theta_2 \rho^2 n_2^2, \\
  a_{22} &\defined \theta_2^3 n_2^2+ 2\theta_2 \rho^2 n_1 n_2 + \theta_1 \rho^2 n_1^2\text{ and} \\
a_{12} &\defined \theta_1^2 \rho n_1^2 + \theta_2^2\rho n_2^2 + \theta_1 \theta_2 \rho n_1 n_2 + \rho^3 n_1 n_2.
\end{align*}
Similarly 
\begin{align*}
 \mathbb E [S_{ii}|\mathcal C_i] &= a_i + \mathcal O(\theta_{\max}^2 n_{\max} + \theta_{\max}^2 \epsilon n_{\max}^2)\nonumber
\end{align*}
as well as
\begin{align*}
\mathbb E[N_{ij}|\mathcal C_i, \mathcal C_j]
&= N_{ij}\nonumber \\
&= \left(\sum_\alpha |\mathcal C_i\cap  \mathcal C_\alpha^*|\right)\left(\sum_\alpha |\mathcal C_j\cap  \mathcal C_\alpha^*|\right)\nonumber\\
&= \begin{cases}
    d_{11}  + \mathcal O(\theta_{\max}^2 n_{\max} + \epsilon \theta_{\max} n_{\max}^2)\nonumber &,i,j\in\mathcal C_1^*\\
        d_{22}  + \mathcal O(\theta_{\max}^2 n_{\max} + \epsilon \theta_{\max} n_{\max}^2)\nonumber &,i,j\in\mathcal C_2^*\\
    d_{12}  + \mathcal O(\theta_{\max}^2 n_{\max} + \epsilon \theta_{\max} n_{\max}^2)\nonumber &,\text{otherwise}
   \end{cases}
\end{align*}
for
\begin{align*}
d_{11} &\defined  (\theta_1n_1 + \rho n_2)^2,\\
d_{22} &\defined  (\theta_2n_2 + \rho n_1)^2\text{ and}\\
d_{12} &\defined  (\theta_1n_1 + \rho n_2) (\theta_2n_2 + \rho n_1).
\end{align*}
For \begin{align}
    R\defined \theta_{\max} n_{\max} + \epsilon \theta_{\max} n_{\max}^2 \nonumber
    \end{align}
 we conclude
\begin{align*}
\mathbb E[ \widetilde \theta_{ii}|\mathcal C_i] &=  \frac{a_i +\mathcal O\left(\theta_{\max} R \right)}{d_{ii} +\mathcal O\left(R\right)}.
\end{align*}
For \begin{align}
  \delta\defined \left(\frac{\theta_{\max}}{\theta_{\min}}\right)^2 \left( \frac{n_{\max}}{n_{\min}}\right)^2 \max\left\{\frac{1}{\theta_{\min} n_{\max}}, \frac{\epsilon}{\theta_{\min}} \right\}\nonumber 
    \end{align}
 and under the assumption \(\delta \ll 1\) we can rewrite 
\begin{align*}
\mathbb E[ \widetilde \theta_{ij}|\mathcal C_i, \mathcal C_j] &= 
\begin{cases}\frac{a_{11}}{d_{11}}\left(1+\mathcal O(\delta)\right)& ,i, j\in \mathcal C_1^* \\
\frac{a_{22}}{d_{22}}\left(1+\mathcal O(\delta)\right)& ,i, j\in \mathcal C_2^* \\
                  \frac{a_{12}}{d_{22}}\left(1+\mathcal O(\delta)\right)&  ,\text{otherwise.}
                    \end{cases}
\end{align*}
Let us consider the case \(i\in\alpha_j\). W.l.o.g. let us assume \(\alpha_i = 1\). Then
\begin{align*}
 \mathbb E[ \widetilde \theta_{ii} - \widetilde \theta_{ij} | \mathcal C_i, \mathcal C_j]  &= \frac{a_{11}}{d_{11}}\left(1+\mathcal O(\delta)\right) - \frac{a_{11}}{d_{11}}\left(1+\mathcal O(\delta) \right)\nonumber\\
  &=\mathcal O\left(\delta \theta_{\max}\right).\nonumber
\end{align*}
Since \(\widetilde \theta_{i\lor j}\) is the weighted mean of \(\widetilde \theta_{ii}\), \(\widetilde \theta_{jj}\) and \(\widetilde \theta_{ij}\), we also conclude 
\begin{align}
\mathbb E[ \widetilde \theta_{i\lor j} - \widetilde \theta_{ij} | \mathcal C_i, \mathcal C_j]  &= \mathcal O\left(\delta \theta_{\max}\right). \label{dp_ineq_diff_upper_bd_simplified}
\end{align}
Next, we consider the other case \(i\notin\alpha_i\). W.l.o.g. let us assume \(\alpha_i = 1\) and \(\alpha_j = 2\). 
We compute
\begin{align*}
\left(\frac{a_{11}}{d_{11}} - \frac{a_{22}}{d_{22}}\right) + \left(\frac{a_{11}}{d_{11}} - \frac{a_{12}}{d_{12}}\right)  &=\frac{x}{  (\theta_1n_1 + \rho n_2)^2 (\theta_2n_2 + \rho n_1)^2}
\end{align*}
with
\begin{align*}
 x &= 2a_{11}  (\theta_2n_2 + \rho n_1)^2 - a_{22} (\theta_1n_1 + \rho n_2)^2 - a_{12} (\theta_1n_1 + \rho n_2) (\theta_2n_2 + \rho n_1)\\
 &= n_1n_2(\theta_1\theta_2 - \rho^2) \left[(\theta_2 - \rho)(\theta_1^2 - \rho^2) n_1 n_2 + R_{ij}\right]
\end{align*}
for 
\begin{align*}
R_{ij} &= 3\rho (\theta_2 - \rho) (\theta_1 n_1^2 - \theta_2 n_2^2) + (\theta_1-\theta_2) \left[n_1n_2(\theta_1\theta_2 + \theta_1 \rho + \rho^2) + 3\rho\theta_1n_1^2 \right].
\end{align*}
Note that
\begin{align*}
 R_{ij} + R_{ji} &= 3\rho (\theta_2 - \theta_1)(\theta_1 n _1^2 - \theta_2 n_2^2) + (\theta_1-\theta_2) \left[\rho n_1 n_2(\theta_1- \theta_2) + 3\rho(\theta_1 n_1^2- \theta_2 n_2^2)\right]\\
 &= \rho n_1 n_2 (\theta_1 - \theta_2)^2\\
 & \geq 0.
\end{align*}
So w.l.o.g. (after possibly exchanging the two indices) we can assume 
\begin{align*}
\left(\frac{a_{11}}{d_{11}} - \frac{a_{22}}{d_{22}}\right) + \left(\frac{a_{11}}{d_{11}} - \frac{a_{12}}{d_{12}}\right)  &\geq \frac{(\theta_1\theta_2 - \rho^2) (\theta_1 - \rho)(\theta_2 - \rho)(\theta_1 +\rho) n_1^2 n_2^2}{  (\theta_1n_1 + \rho n_2)^2 (\theta_2n_2 + \rho n_1)^2}\\
& \gtrsim \left(\frac{\theta_{\min}}{\theta_{\max}}\right)^2 \left(\frac{ n_{\min} }{ n_{\max}}\right)^2 \frac{(\theta_{\max}  - \rho)(\theta_{\min} - \rho)^2 }{\theta_{\max} ^2}
\end{align*}
and thus
\begin{align*}
 &\mathbb E[( \widetilde \theta_{ii} - \widetilde \theta_{jj})+ ( \widetilde \theta_{ii} - \widetilde \theta_{ij} )| \mathcal C_i, \mathcal C_j]  \\
 =   & \left(\frac{a_{11}}{d_{11}} \left(1+\mathcal O(\delta) \right) - \frac{a_{22}}{d_{22}} \left(1+\mathcal O(\delta) \right) \right)+  \left(\frac{a_{11}}{d_{11}} \left(1+\mathcal O(\delta) \right)  - \frac{a_{12}}{d_{12}} \left(1+\mathcal O(\delta) \right) \right)  \nonumber\\
 \gtrsim  &  \left(\frac{\theta_{\min}}{\theta_{\max}}\right)^2 \left(\frac{ n_{\min} }{ n_{\max}}\right)^2 \frac{(\theta_{\max}  - \rho)(\theta_{\min} - \rho)^2 }{\theta_{\max} ^2}+ \mathcal O(\delta \theta_{\max}).
\end{align*}
Since \(\widetilde \theta_{i\lor j}\) is the weighted mean of \(\widetilde \theta_{ii}\), \(\widetilde \theta_{jj}\) and \(\widetilde \theta_{ij}\), there exists indices \((i', j')\in \{(i, i), (j, j), (i, j)\}\) such that
\begin{align}
\mathbb E[ \widetilde \theta_{i\lor j} - \widetilde\theta_{i'j'} | \mathcal C_i, \mathcal C_j]  &\gtrsim\left(\frac{\theta_{\min}}{\theta_{\max}}\right)^2 \left(\frac{ n_{\min} }{ n_{\max}}\right)^2 \frac{(\theta_{\max}  - \rho)(\theta_{\min} - \rho)^2 }{\theta_{\max} ^2}+ \mathcal O(\delta \theta_{\max}).\label{dp_ineq_diff_lower_bd_simplified}
\end{align}
Similar as in the proof of Theorem \ref{thm_original_AWCD}, we deduce from our assumptions \(\theta_{\min}^3 n_{\min}^2 \gg \log n_{\min}\) and \(\theta_{\max} \leq \frac{1}{2}\) as well as Lemma \ref{lem_bernstein} that
\begin{itemize}
 \item the estimates \(\widetilde \theta_{\cdot}\) (including \(\widetilde \theta_{ii}\), \(\widetilde \theta_{jj}\), \(\widetilde \theta_{ij}\) and \(\widetilde \theta_{i\lor j}\)) are with large probability inside an intervall of the form \([\frac{\rho}{2}, \theta_{\max}+\frac{\rho}{2}]\), implying that the corresponding Fisher information of a Bernoulli variable with such a mean is up to bounded constants bounded from below by \(\theta_{\max}^{-1}\) and
 \item with large probability, \begin{align}
 |\widetilde \theta_{\cdot} -   \mathbb E [\widetilde \theta_{\cdot}|\mathcal C_i, \mathcal C_j] |&\lesssim (\log n_{\min})^\frac{1}{2} \theta_{\min}^{-\frac{1}{2}} n_{\min}^{-1}.\nonumber 
\end{align}
\end{itemize}
Combining the above with the quadratic taylor expansion of the Kullbach-Leibler and \eqref{dp_ineq_diff_lower_bd_simplified}, we conclude in the case where \(i\) and \(j\) belong to different communities 
\begin{align}
\theta_{\max}^{\frac{1}{2}}\mathcal K^\frac{1}{2}(\widetilde \theta_{i'j'}, \widetilde\theta_{i\lor j})&\gtrsim    |\widetilde \theta_{i'j'}- \widetilde \theta_{i\lor j}| \nonumber\\
&\geq \left|\mathbb E[\widetilde\theta_{i'j'} | \mathcal C_i, \mathcal C_j]-\mathbb E[\widetilde\theta_{i\lor j} | \mathcal C_i, \mathcal C_j]\right| -\left\vert\widetilde\theta_{i'j'} - \mathbb E[\widetilde\theta_{i'j'} | \mathcal C_i, \mathcal C_j]\right\vert - \left\vert\widetilde\theta_{i\lor j} - \mathbb E[\widetilde\theta_{i\lor j} | \mathcal C_i, \mathcal C_j]\right\vert\nonumber\\
&\gtrsim  \left(\frac{\theta_{\min}}{\theta_{\max}}\right)^2 \left(\frac{ n_{\min} }{ n_{\max}}\right)^2 \frac{(\theta_{\max}  - \rho)(\theta_{\min} - \rho)^2 }{\theta_{\max} ^2} + \mathcal O\left(\delta \theta_{\max} + (\log n_{\min})^\frac{1}{2}\theta_{\min}^{-\frac{1}{2}} n_{\min}^{-1}\right),\nonumber
\end{align}
whereas in the other case we conclude analogously from \eqref{dp_ineq_diff_upper_bd_simplified}
\begin{align}
\theta_{\max}^{\frac{1}{2}}\mathcal K^\frac{1}{2}(\widetilde\theta_{\cdot}, \widetilde\theta_{i\lor j})&\lesssim \delta \theta_{\max} + (\log n_{\min})^\frac{1}{2}\theta_{\min}^{-\frac{1}{2}} n_{\min}^{-1}. \nonumber
\end{align}
Recall that the concentration result \(N_{\cdot} = d_{\cdot}(1+\mathcal O(\delta))\) and with \(\delta \ll 1\) and note that \(d_{ii}\), \(d_{jj}\) and \(d_{ij}\) differ only by a factor \(\lesssim \left(\frac{\theta_{\max}}{\theta_{\min}}\right)^2 \left(\frac{n_{\max}}{n_{\min}}\right)^2\). We conclude that to show the consistency of the test under a proper choice of the threshold, it will suffice to check 
\begin{align}
 \delta \theta_{\max}&\ll  \left(\frac{\theta_{\min}}{\theta_{\max}}\right)^4 \left(\frac{ n_{\min} }{ n_{\max}}\right)^4 \frac{(\theta_{\max}  - \rho)(\theta_{\min} - \rho)^2 }{\theta_{\max} ^2}
 \nonumber\\
\Leftrightarrow \qquad  \max\left\{\frac{1}{\theta_{\min} n_{\max}}, \frac{\epsilon}{\theta_{\min}} \right\}&\ll \left(\frac{\theta_{\min}}{\theta_{\max}}\right)^6 \left(\frac{ n_{\min} }{ n_{\max}}\right)^6 \frac{(\theta_{\max}  - \rho)(\theta_{\min} - \rho)^2 }{\theta_{\max}^3} \label{dp_condition1}
\end{align}
and
\begin{align}
 \sqrt{\frac{\log n_{\min}}{\theta_{\min} n_{\min}^2}} &\ll  \left(\frac{\theta_{\min}}{\theta_{\max}}\right)^4 \left(\frac{ n_{\min} }{ n_{\max}}\right)^4 \frac{(\theta_{\max}  - \rho)(\theta_{\min} - \rho)^2 }{\theta_{\max} ^2}, \label{dp_condition2}
\end{align}
while also considering the large probability assumption 
\begin{align}
 \min \left\{ \frac{\epsilon^2 n_{\min}}{\theta_{\max} \log n_{\min}}, \frac{\epsilon n_{\min}}{\log n_{\min}} \right\} \gg 1. \label{dp_condition3}
\end{align}
An \(\epsilon > 0\) satisfying \eqref{dbs_condition1} and \eqref{dbs_condition3} exists as long as 
\begin{align}
 \frac{\log n_{\min}}{n_{\min}} + \sqrt{\frac{\theta_{\max} \log n_{\min}}{n_{\min}}} \ll \left(\frac{\theta_{\min}}{\theta_{\max}}\right)^7 \left(\frac{ n_{\min} }{ n_{\max}}\right)^6 \frac{(\theta_{\max}  - \rho)(\theta_{\min} - \rho)^2 }{\theta_{\max}^2}, \label{dp_condition21}
\end{align}
whereas the \(\epsilon\)-independent part of conditions \eqref{dbs_condition1} and \eqref{dbs_condition2} is implied by
\begin{align}
\left( \frac{\theta_{\max}}{\theta_{\min}} \right)\frac{1}{n_{\max}} + \sqrt{\frac{\log n_{\min}}{\theta_{\min} n_{\min}^2}} \ll \left(\frac{\theta_{\min}}{\theta_{\max}}\right)^6 \left(\frac{ n_{\min} }{ n_{\max}}\right)^6 \frac{(\theta_{\max}  - \rho)(\theta_{\min} - \rho)^2 }{\theta_{\max} ^2}. \label{dp_condition22}
\end{align}
By simple calculus we can verify that our assumption \[\theta_{\min} - \rho\gg \left(\frac{\theta_{\max}}{\theta_{\min}}\right)^\frac{7}{3}\left(\frac{n_{\max}}{n_{\min}}\right)^2\max\left\{n_{\min}^{-\frac{1}{3}} \theta_{\max}^\frac{1}{2}(\log n_{\min})^\frac{1}{6},   n_{\min}^{-\frac{1}{6}} \theta_{\max}^\frac{5}{6} (\log n_{\min})^\frac{1}{6}\right\}\] implies conditions \eqref{dp_condition21} and \eqref{dp_condition22}.
\end{proof}


\appendix

\section{Equivalent formulations and log$_n$-log$_n$ plot for rates of consistency}
 \label{ref_appendix}

Recall that for $k=1$, Theorem \ref{thm_original_AWCD} guarantees strong consistency as long as 
 \begin{equation}\theta - \rho\gg \max\{n^{-\frac{1}{3}} \theta^\frac{1}{2}(\log n)^\frac{1}{6},   n^{-\frac{1}{6}} \theta^\frac{5}{6} (\log n)^\frac{1}{6}\}. \label{condition232} \end{equation}
Considering \(\theta - \rho < \theta\), condition \eqref{condition232} implies \(\theta \gg n^{-\frac{2}{3}} (\log n)^\frac{1}{3}\). This is equivalent to \eqref{condition232} in case \(\frac{\theta}{\rho}\equiv C\). Furthermore, in view of
\begin{align*}
 n^{-\frac{1}{3}} \theta^\frac{1}{2}(\log n)^\frac{1}{6} &>   n^{-\frac{1}{6}} \theta^\frac{5}{6} (\log n)^\frac{1}{6}\\ \Leftrightarrow \theta &< n{-\frac{1}{2}}
\end{align*}
we can rewrite the consistency condition \eqref{condition232} as
\begin{align*}
\theta - \rho \gg \begin{cases}
n^{-\frac{1}{6}} \theta^\frac{5}{6} (\log n)^\frac{1}{6} &, n^{-\frac{1}{2}} \lesssim \theta < \frac{1}{2} \\
n^{-\frac{1}{3}} \theta^\frac{1}{2}(\log n)^\frac{1}{6} &, n^{-\frac{2}{3}} (\log n)^\frac{1}{3} \lesssim \theta \lesssim n^{-\frac{1}{2}},
               \end{cases}
\end{align*}
or equivalently, using the notation \(\theta = n^\gamma\)
\begin{align*}
&\log_n(\theta- \rho)  \color{gray}-\frac{c}{\log n}\color{black} \geq \\ &\begin{cases}
 \frac{5 \gamma - 1}{6} \color{gray}+ \frac{1}{6} \log_n(\log n) \color{black}&, -\frac{1}{2} \color{gray}+ \mathcal O((\log n)^{-1})\color{black} \leq \gamma <0 + \color{gray} \log_n\left(\frac{1}{2}\right)\color{black}\\
\frac{3 \gamma - 2}{6} + \frac{1}{6} \log_n (\log n) & ,-\frac{2}{3} \color{gray}+ \frac{1}{3}\log_n(\log n) \color{black}\leq \gamma \color{gray} + \mathcal O((\log n)^{-1})\color{black}\leq -\frac{1}{2}                       
\end{cases}
\end{align*}
for a large enough constant \(c\in\mathbb R\).
Up to the logarithmic terms (which are vanishing for \(n\to\infty\)), this can be visualized in a plot of \(\log_n\left(\frac{\theta-\rho}{\theta}\right)\) versus \(\log_n(\theta)\) by the area of a polygon consisting of the following points:
\begin{align*}
 &\left( 0 , 0 \right)  \\
&  \left( 0, -\frac{1}{6} \right)  \\
  & \left( -\frac{1}{2}, -\frac{1}{12} \right)  \\
  &  \left( -\frac{2}{3}, 0 \right)  
\end{align*}
 
Using the original algorithm AWCD\(^\circ\) instead, we have the sufficient condition for strong consistency
\begin{align}
 \theta - \rho \gg \max \{ n^{-\frac{1}{3}}\theta^\frac{1}{3}, n^{-\frac{1}{6}}\theta^\frac{5}{6} (\log n)^\frac{1}{6}\}. \label{condition242}
\end{align}
Considering \(\theta - \rho < \theta\), condition \eqref{condition242} implies \(\theta \gg n^{-\frac{1}{2}}\). This is equivalent to \eqref{condition242} in case \(\frac{\theta}{\rho}\equiv C\). Furthermore, because of
\begin{align*}
 n^{-\frac{1}{3}} \theta^\frac{1}{3} &>   n^{-\frac{1}{6}} \theta^\frac{5}{6} (\log n)^\frac{1}{6}\\ \Leftrightarrow \theta &< n^{-\frac{1}{3}} (\log n)^{-\frac{1}{3}}
\end{align*}
we can rewrite the consistency condition \eqref{condition242} as
\begin{align*}
\theta - \rho \gg \begin{cases}
n^{-\frac{1}{6}} \theta^\frac{5}{6} (\log n)^\frac{1}{6} &, n^{-\frac{1}{3}} (\log n)^{-\frac{1}{3}} \lesssim \theta < \frac{1}{2} \\
n^{-\frac{1}{3}} \theta^\frac{1}{3} &, n^{-\frac{1}{2}}  \lesssim \theta \lesssim n^{-\frac{1}{3}} (\log n)^{-\frac{1}{3}}
               \end{cases}
\end{align*}
or equivalently, using the notation \(\theta = n^\gamma\)
\begin{align*}
&\log_n(\theta- \rho)  \color{gray}-\frac{c}{\log n}\color{black}\geq\\ & \begin{cases}
 \frac{5 \gamma - 1}{6} \color{gray}+ \frac{1}{6} \log_n(\log n) &\color{black}, -\frac{1}{3}  \color{gray}-\frac{1}{3}\log_n(\log n) + \mathcal O((\log n)^{-1}) \color{black} \leq \gamma < 0 + \color{gray}\log_n\left(\frac{1}{2}\right)\color{black}\\
\frac{ \gamma -1}{3}  & ,-\frac{1}{2}  \leq \gamma  \color{gray} + \mathcal O((\log n)^{-1}) \color{black} \leq -\frac{1}{3}  \color{gray} -\frac{1}{3}\log_n(\log n)      \color{black}      .   
\end{cases}
\end{align*}
Again ignoring the logarithmic terms, this can be visualized in a plot of \(\log_n\left(\frac{\theta-\rho}{\theta}\right)\) versus \(\log_n(\theta)\) by the area of a polygon consisting of the following points:
\begin{align*}
 &\left( 0 , 0 \right)  \\
&  \left( 0, -\frac{1}{6} \right)  \\
  & \left( -\frac{1}{3}, -\frac{1}{9} \right)  \\
  &  \left( -\frac{1}{2}, 0 \right)  
\end{align*}

Next, we discuss the case \(k\geq 2\), starting with main version AWCD. Recall from Theorem \ref{thm_consistency_AWCD_general_k} the sufficient consistency condition
\begin{align*}
 \theta - \rho & \gg \max \{A, B, C\}\\
 \end{align*}
for
\begin{align*}
A &= \theta^\frac{3k+1}{2k+1} n^\frac{k-1}{2k+1},\\
B &= \theta^\frac{4k+1}{4k+2}n^{-\frac{1}{4k+2}} (\log n)^\frac{1}{4k+2}\text{ and}\\
C&= \theta^\frac{2k-1}{4k+2}n^{-\frac{1}{2}} (\log n)^\frac{1}{4k+2}.
\end{align*}
Since \(\frac{B}{A} = \theta^{-\frac{1}{2}}n^{-\frac{2k-1}{4k+2}} (\log n)^\frac{1}{4k+2}\), we have
\begin{align*}
B>A \Leftrightarrow \theta < n^{-\frac{2k-1}{2k+1}} (\log n)^\frac{1}{2k+1}.
\end{align*}
Since \(\frac{C}{B} = \theta^{-\frac{k+1}{2k+1}}n^{-\frac{k}{2k+1}}\), we have
\begin{align*}
 C>B \Leftrightarrow \theta < n^{-\frac{k}{k+1}}.
\end{align*}
Furthermore, \(\theta \gg A\) implies the upper bound
\begin{align*}
 \theta & \ll n^{-\frac{k-1}{k}}
\end{align*}
and \(\theta \gg C\) implies the lower bound
\begin{align*}
 \theta & \gg n^{-\frac{2k+1}{2k+3}} (\log n)^\frac{1}{2k+3}.
\end{align*}
This allows us to rewrite the consistency condition as
\begin{align*}
 \theta - \rho \gg \begin{cases}
                 A & ,n^{-\frac{2k-1}{2k+1}} (\log n)^\frac{1}{2k+1} \lesssim \theta \lesssim n^{-\frac{k-1}{k}}\\
                 B& ,n^{-\frac{k}{k+1}}\lesssim \theta \lesssim n^{-\frac{2k-1}{2k+1}} (\log n)^\frac{1}{2k+1} \\
                 C & ,n^{-\frac{2k+1}{2k+3}} (\log n)^\frac{1}{2k+3} \lesssim \theta \lesssim  n^{-\frac{k}{k+1}}
                \end{cases}
\end{align*}
for
\begin{align*}
 n^{-\frac{2k+1}{2k+3}} (\log n)^\frac{1}{2k+3} \ll \theta \ll n^{-\frac{k-1}{k}}.
\end{align*}
Note that the latter is in fact equivalent to the consistency condition in case \(\frac{\theta}{\rho}\equiv C\).
Using the notation \[\theta = n^{\gamma},\] we can write equivalently
 \begin{align*}
&  \log_n (\theta - \rho) \color{gray}-\frac{c}{\log n}\color{black}\geq\\
&  \begin{cases}
      \frac{\gamma(3k+1) +  k-1}{2k+1}&, \gamma \color{gray} + \mathcal O((\log n)^{-1}) \color{black}\in [ -\frac{2k-1}{2k+1} \color{gray} + \frac{1}{2k+1} \log_n(\log n)\color{black}, -\frac{k-1}{k}]
 \\
  \frac{\gamma(4k +1) - 1}{4k+2}   \color{gray} + \frac{1}{4k+2}\log_n(\log n) & \color{black}, \gamma \color{gray} + \mathcal O((\log n)^{-1}) \color{black} \in [-\frac{k}{k+1} , -\frac{2k-1}{2k+1} \color{gray}+ \frac{1}{2k+1} \log_n(\log n)\color{black}]\\
 \frac{\gamma(2k-1)}{4k+2} - \frac{1}{2} \color{gray} + \frac{1}{4k+2}\log_n(\log n) & \color{black}, \gamma \color{gray} + \mathcal O((\log n)^{-1}) \color{black} \in [-\frac{2k+1}{2k+3} \color{gray}+ \frac{1}{2k+3} \log_n(\log n)\color{black}, -\frac{k}{k+1}]
                              \end{cases}
 \end{align*}
for a large enough constant \(c\in\mathbb R\). Up to the logarithmic terms (which are vanishing for \(n\to\infty\)), we can visualize the consistency regime in a plot of \(\log_n\left(\frac{\theta - \rho}{\theta}\right)\) versus \(\log_n(\theta)\) by the area of a polygon consisting of the following points:
\begin{align*}
\left(   - \left(\frac{k-1}{k}\right),  0  \right) \\
\left(  -\left(\frac{2k-1}{2k+1} \right),  -\left(\frac{1}{(2k+1)^2} \right) \right) \\
\left(  -\left(\frac{k}{k+1}\right) , -\frac{1}{(4k+2)(k+1)}   \right) \\
\left(  -\left(\frac{2k+1}{2k+3}\right) ,   0  \right) \\
\end{align*}
Next, we consider AWCD\(^+\). Recall the consistency condition \begin{align*}
\theta -\rho \gg \max\{A, B, C^+\}
\end{align*}
for
\begin{align*}
C^+&= \theta^\frac{1}{2}n^{-\frac{k}{2k+1}} (\log n)^\frac{1}{4k+2}.
\end{align*}
Since \(\frac{C^+}{B} = \theta^{-\frac{k}{2k+1}}n^{-\frac{2k-1}{4k+2}}\) we have
\begin{align*}
C^+ > B \Leftrightarrow \theta < n^{-\frac{2k-1}{2k}}.
\end{align*}
Moreover, \(\theta \gg C^+\) implies the lower bound
\begin{align*}
 \theta \gg n^{-\frac{2k}{2k+1}} (\log n)^\frac{1}{2k+1}.
\end{align*}
The condition
\begin{align*}
 n^{-\frac{k}{2k+1}} \ll \theta \ll n^{-\frac{k-1}{k}}
\end{align*}
is in fact equivalent to the final sufficient condition above in case of \(\frac{\theta}{\rho}\equiv C\).
Moreover, according to the above, the consistency condition can be rewritten as
\begin{align*}
 \theta - \rho \gg \begin{cases}
                 A & ,n^{-\frac{2k-1}{2k+1}} (\log n)^\frac{1}{2k+1} \lesssim \theta \lesssim n^{-\frac{k-1}{k}}\\
                 B& ,n^{-\frac{2k-1}{2k}}\lesssim \theta \lesssim n^{-\frac{2k-1}{2k+1}} (\log n)^\frac{1}{2k+1} \\
                 C^+ & ,n^{-\frac{2k}{2k+1}} (\log n)^\frac{1}{2k+1} \lesssim \theta \lesssim  n^{-\frac{2k-1}{2k}}.
                \end{cases}
\end{align*}
Using the notation \[\theta = n^{\gamma},\] we can write equivalently
 \begin{align*}
 & \log_n (\theta - \rho) \color{gray}- \frac{c}{\log n}\color{black}\geq \\
 &\begin{cases}
      \frac{\gamma(3k+1) +  k-1}{2k+1}&, \gamma \color{gray}+\mathcal O((\log n)^{-1})\color{black} \in [ -\frac{2k-1}{2k+1} \color{gray} + \frac{1}{2k+1} \log_n(\log n)\color{black}, -\frac{k-1}{k}]
 \\
  \frac{\gamma(4k +1) - 1}{4k+2}   \color{gray} + \frac{1}{4k+2}\log_n(\log n) & \color{black}, \gamma \color{gray}+\mathcal O((\log n)^{-1})\color{black} \in [-\frac{2k-1}{2k} , -\frac{2k-1}{2k+1} \color{gray}+ \frac{1}{2k+1} \log_n(\log n)\color{black}]\\
 \frac{\gamma}{2} - \frac{k}{2k+1} \color{gray} + \frac{1}{4k+2}\log_n(\log n) & \color{black}, \gamma \color{gray}+\mathcal O((\log n)^{-1})\color{black} \in [-\frac{2k}{2k+1} \color{gray}+ \frac{1}{2k+1} \log_n(\log n)\color{black}, -\frac{2k-1}{2k}].
                              \end{cases}
 \end{align*}
The corresponding polygon in a plot of \(\log_n\left(\frac{\theta - \rho}{\theta}\right)\) versus \(\log_n(\theta)\) is defined by the following points:
\begin{align*}
\left(   - \left(\frac{k-1}{k}\right),  0  \right) \\
\left(  -\left(\frac{2k-1}{2k+1} \right),  -\left(\frac{1}{(2k+1)^2} \right) \right) \\
\left(  -\left(\frac{2k-1}{2k}\right) , -\frac{1}{2k(4k+2)}   \right) \\
\left(  -\left(\frac{2k}{2k+1}\right) ,   0  \right) \\
\end{align*}
Lastly, we also discuss the version AWCD\(^\circ\) without bias correction. Recall the consistency condition
\begin{align*}
\theta - \rho & \gg \max\{A, B, C^\circ\}
\end{align*}
for
\begin{align*}
 C^\circ &= \theta^\frac{k}{2k+1}n^{-\frac{k}{2k+1}}.
\end{align*}
Since \(\frac{C^\circ}{B} = \theta^{-\frac{1}{2}}n^{-\frac{2k-1}{4k+2}} (\log n)^{-\frac{1}{4k+2}}\), we have
\begin{align*}
C^\circ > B \Leftrightarrow \theta < n^{-\frac{2k-1}{2k+1}} (\log n)^{-\frac{1}{2k+1}}.
\end{align*}
Moreover, \(\theta \gg C^\circ\) implies the lower bound
\begin{align*}
 \theta \gg n^{-\frac{k}{k+1}} .
\end{align*}
The condition
\begin{align*}
 n^{-\frac{k}{2k+1}} \ll \theta \ll n^{-\frac{k-1}{k}}
\end{align*}
is equivalent to the final sufficient condition above in case of \(\frac{\theta}{\rho}\equiv C\).
Moreover, we can rewrite the consistency condition as
\begin{align*}
 \theta - \rho \gg \begin{cases}
                 A & ,n^{-\frac{2k-1}{2k+1}} (\log n)^\frac{1}{2k+1} \lesssim \theta \lesssim n^{-\frac{k-1}{k}}\\
                 B& ,n^{-\frac{2k-1}{2k+1}} (\log n)^{-\frac{1}{2k+1}}\lesssim \theta \lesssim n^{-\frac{2k-1}{2k+1}} (\log n)^\frac{1}{2k+1} \\
                 C^\circ & ,n^{-\frac{k}{k+1}} \lesssim \theta \lesssim  n^{-\frac{2k-1}{2k+1}} (\log n)^{-\frac{1}{2k+1}}.
                \end{cases}
\end{align*}
The corresponding polygon in a plot of \(\log_n\left(\frac{\theta - \rho}{\theta}\right)\) versus \(\log_n(\theta)\) is defined by the following points:
\begin{align*}
\left(   - \left(\frac{k-1}{k}\right),  0  \right) \\
\left(  -\left(\frac{2k-1}{2k+1} \right),  -\left(\frac{1}{(2k+1)^2} \right) \right) \\
\left(  -\left(\frac{k}{k+1}\right) ,   0  \right) \\
\end{align*}

\bibliography{mybib}

\begin{thebibliography}{41}

\bibitem[\protect\citeauthoryear{Abbe}{2018}]{survey_sbm}
\begin{barticle}[author]
\bauthor{\bsnm{Abbe},~\bfnm{Emmanuel}\binits{E.}}
(\byear{2018}).
\btitle{Community Detection and Stochastic Block Models: Recent Developments}.
\bjournal{Journal of Machine Learning Research}
\bvolume{18}
\bpages{1--86}.
\end{barticle}
\endbibitem

\bibitem[\protect\citeauthoryear{Adamyan, Efimov and Spokoiny}{2019}]{AWCD}
\begin{barticle}[author]
\bauthor{\bsnm{Adamyan},~\bfnm{Larisa}\binits{L.}},
  \bauthor{\bsnm{Efimov},~\bfnm{Kirill}\binits{K.}} \AND
  \bauthor{\bsnm{Spokoiny},~\bfnm{Vladimir}\binits{V.}}
(\byear{2019}).
\btitle{Adaptive Nonparametric Community Detection}.
\bjournal{Discussion Papers by IRTG 1792}
\bvolume{2019-006}.
\end{barticle}
\endbibitem

\bibitem[\protect\citeauthoryear{Arenas
  et~al.}{2007}]{modularity_directed_graph}
\begin{barticle}[author]
\bauthor{\bsnm{Arenas},~\bfnm{Alex}\binits{A.}},
  \bauthor{\bsnm{Duch},~\bfnm{Jordi}\binits{J.}},
  \bauthor{\bsnm{Fern{\'a}ndez},~\bfnm{Alberto}\binits{A.}} \AND
  \bauthor{\bsnm{G{\'o}mez},~\bfnm{Sergio}\binits{S.}}
(\byear{2007}).
\btitle{Size reduction of complex networks preserving modularity}.
\bjournal{New Journal of Physics}
\bvolume{9}
\bpages{176}.
\end{barticle}
\endbibitem

\bibitem[\protect\citeauthoryear{Bagrow}{2008}]{benchmark_sbm2}
\begin{barticle}[author]
\bauthor{\bsnm{Bagrow},~\bfnm{James~P}\binits{J.~P.}}
(\byear{2008}).
\btitle{Evaluating local community methods in networks}.
\bjournal{Journal of Statistical Mechanics: Theory and Experiment}
\bvolume{2008}
\bpages{P05001}.
\bdoi{10.1088/1742-5468/2008/05/p05001}
\end{barticle}
\endbibitem

\bibitem[\protect\citeauthoryear{Barnes}{1982}]{barnes_spectral_bisection}
\begin{barticle}[author]
\bauthor{\bsnm{Barnes},~\bfnm{Earl~R}\binits{E.~R.}}
(\byear{1982}).
\btitle{An algorithm for partitioning the nodes of a graph}.
\bjournal{SIAM Journal on Algebraic Discrete Methods}
\bvolume{3}
\bpages{541--550}.
\end{barticle}
\endbibitem

\bibitem[\protect\citeauthoryear{Bickel and Chen}{2009}]{sbm_on_karate_club}
\begin{barticle}[author]
\bauthor{\bsnm{Bickel},~\bfnm{Peter~J}\binits{P.~J.}} \AND
  \bauthor{\bsnm{Chen},~\bfnm{Aiyou}\binits{A.}}
(\byear{2009}).
\btitle{A nonparametric view of network models and Newman--Girvan and other
  modularities}.
\bjournal{Proceedings of the National Academy of Sciences}
\bvolume{106}
\bpages{21068--21073}.
\end{barticle}
\endbibitem

\bibitem[\protect\citeauthoryear{Blondel et~al.}{2008}]{mobile_phone_example}
\begin{barticle}[author]
\bauthor{\bsnm{Blondel},~\bfnm{Vincent}\binits{V.}},
  \bauthor{\bsnm{Guillaume},~\bfnm{Jean-Loup}\binits{J.-L.}},
  \bauthor{\bsnm{Lambiotte},~\bfnm{Renaud}\binits{R.}} \AND
  \bauthor{\bsnm{Lefebvre},~\bfnm{Etienne}\binits{E.}}
(\byear{2008}).
\btitle{Fast Unfolding of Communities in Large Networks}.
\bjournal{Journal of Statistical Mechanics Theory and Experiment}
\bvolume{2008}.
\bdoi{10.1088/1742-5468/2008/10/P10008}
\end{barticle}
\endbibitem

\bibitem[\protect\citeauthoryear{Clauset, Newman and
  Moore}{2004}]{amazon_example}
\begin{barticle}[author]
\bauthor{\bsnm{Clauset},~\bfnm{Aaron}\binits{A.}},
  \bauthor{\bsnm{Newman},~\bfnm{M.~E.~J.}\binits{M.~E.~J.}} \AND
  \bauthor{\bsnm{Moore},~\bfnm{Cristopher}\binits{C.}}
(\byear{2004}).
\btitle{Finding community structure in very large networks}.
\bjournal{Phys. Rev. E}
\bvolume{70}
\bpages{066111}.
\bdoi{10.1103/PhysRevE.70.066111}
\end{barticle}
\endbibitem

\bibitem[\protect\citeauthoryear{Cohen-Addad et~al.}{2020}]{louvain_on_sbm}
\begin{binproceedings}[author]
\bauthor{\bsnm{Cohen-Addad},~\bfnm{Vincent}\binits{V.}},
  \bauthor{\bsnm{Kosowski},~\bfnm{Adrian}\binits{A.}},
  \bauthor{\bsnm{Mallmann-Trenn},~\bfnm{Frederik}\binits{F.}} \AND
  \bauthor{\bsnm{Saulpic},~\bfnm{David}\binits{D.}}
(\byear{2020}).
\btitle{On the Power of Louvain in the Stochastic Block Model}.
In \bbooktitle{Advances in Neural Information Processing Systems}
(\beditor{\bfnm{H.}\binits{H.}~\bsnm{Larochelle}},
  \beditor{\bfnm{M.}\binits{M.}~\bsnm{Ranzato}},
  \beditor{\bfnm{R.}\binits{R.}~\bsnm{Hadsell}},
  \beditor{\bfnm{M.~F.}\binits{M.~F.}~\bsnm{Balcan}} \AND
  \beditor{\bfnm{H.}\binits{H.}~\bsnm{Lin}}, eds.)
\bvolume{33}
\bpages{4055--4066}.
\bpublisher{Curran Associates, Inc.}
\end{binproceedings}
\endbibitem

\bibitem[\protect\citeauthoryear{Dourisboure, Geraci and
  Pellegrini}{2009}]{www_example}
\begin{barticle}[author]
\bauthor{\bsnm{Dourisboure},~\bfnm{Yon}\binits{Y.}},
  \bauthor{\bsnm{Geraci},~\bfnm{Filippo}\binits{F.}} \AND
  \bauthor{\bsnm{Pellegrini},~\bfnm{Marco}\binits{M.}}
(\byear{2009}).
\btitle{Extraction and Classification of Dense Implicit Communities in the Web
  Graph}.
\bjournal{ACM Trans. Web}
\bvolume{3}.
\bdoi{10.1145/1513876.1513879}
\end{barticle}
\endbibitem

\bibitem[\protect\citeauthoryear{Efimov, Adamyan and Spokoiny}{2019}]{AWC}
\begin{barticle}[author]
\bauthor{\bsnm{Efimov},~\bfnm{Kirill}\binits{K.}},
  \bauthor{\bsnm{Adamyan},~\bfnm{Larisa}\binits{L.}} \AND
  \bauthor{\bsnm{Spokoiny},~\bfnm{Vladimir}\binits{V.}}
(\byear{2019}).
\btitle{Adaptive nonparametric clustering}.
\bjournal{IEEE Trans. Inform. Theory}
\bvolume{65}
\bpages{4875--4892}.
\bdoi{10.1109/TIT.2019.2903113}
\bmrnumber{3988528}
\end{barticle}
\endbibitem

\bibitem[\protect\citeauthoryear{Ferrara}{2012}]{facebook_example2}
\begin{barticle}[author]
\bauthor{\bsnm{Ferrara},~\bfnm{Emilio}\binits{E.}}
(\byear{2012}).
\btitle{A Large-Scale Community Structure Analysis in Facebook}.
\bjournal{EPJ Data Science}
\bvolume{1}
\bpages{1-30}.
\bdoi{10.1140/epjds9}
\end{barticle}
\endbibitem

\bibitem[\protect\citeauthoryear{Fiedler}{1973}]{fiedler_spectral_graph_partitioning}
\begin{barticle}[author]
\bauthor{\bsnm{Fiedler},~\bfnm{Miroslav}\binits{M.}}
(\byear{1973}).
\btitle{Algebraic connectivity of graphs}.
\bjournal{Czechoslovak mathematical journal}
\bvolume{23}
\bpages{298--305}.
\end{barticle}
\endbibitem

\bibitem[\protect\citeauthoryear{Fortunato}{2010}]{fortunato_survey}
\begin{barticle}[author]
\bauthor{\bsnm{Fortunato},~\bfnm{Santo}\binits{S.}}
(\byear{2010}).
\btitle{Community detection in graphs}.
\bjournal{Physics Reports}
\bvolume{486}
\bpages{75-174}.
\bdoi{https://doi.org/10.1016/j.physrep.2009.11.002}
\end{barticle}
\endbibitem

\bibitem[\protect\citeauthoryear{Gao et~al.}{2018}]{degree-corrected-sbm}
\begin{barticle}[author]
\bauthor{\bsnm{Gao},~\bfnm{Chao}\binits{C.}},
  \bauthor{\bsnm{Ma},~\bfnm{Zongming}\binits{Z.}},
  \bauthor{\bsnm{Zhang},~\bfnm{Anderson~Y}\binits{A.~Y.}} \AND
  \bauthor{\bsnm{Zhou},~\bfnm{Harrison~H}\binits{H.~H.}}
(\byear{2018}).
\btitle{Community detection in degree-corrected block models}.
\bjournal{The Annals of Statistics}
\bvolume{46}
\bpages{2153--2185}.
\end{barticle}
\endbibitem

\bibitem[\protect\citeauthoryear{Girvan and
  Newman}{2002}]{girvan_newman_algorithm}
\begin{barticle}[author]
\bauthor{\bsnm{Girvan},~\bfnm{Michelle}\binits{M.}} \AND
  \bauthor{\bsnm{Newman},~\bfnm{Mark~EJ}\binits{M.~E.}}
(\byear{2002}).
\btitle{Community structure in social and biological networks}.
\bjournal{Proceedings of the national academy of sciences}
\bvolume{99}
\bpages{7821--7826}.
\end{barticle}
\endbibitem

\bibitem[\protect\citeauthoryear{Goldenberg
  et~al.}{2010}]{cd_another_survey_with_some_examples}
\begin{barticle}[author]
\bauthor{\bsnm{Goldenberg},~\bfnm{Anna}\binits{A.}},
  \bauthor{\bsnm{Zheng},~\bfnm{Alice~X.}\binits{A.~X.}},
  \bauthor{\bsnm{Fienberg},~\bfnm{Stephen~E.}\binits{S.~E.}} \AND
  \bauthor{\bsnm{Airoldi},~\bfnm{Edoardo~M.}\binits{E.~M.}}
(\byear{2010}).
\btitle{A Survey of Statistical Network Models}.
\bjournal{Found. Trends Mach. Learn.}
\bvolume{2}
\bpages{129–233}.
\bdoi{10.1561/2200000005}
\end{barticle}
\endbibitem

\bibitem[\protect\citeauthoryear{Guimera and
  Nunes~Amaral}{2005}]{simulated_annealing}
\begin{barticle}[author]
\bauthor{\bsnm{Guimera},~\bfnm{Roger}\binits{R.}} \AND
  \bauthor{\bsnm{Nunes~Amaral},~\bfnm{Lu{\'\i}s~A}\binits{L.~A.}}
(\byear{2005}).
\btitle{Functional cartography of complex metabolic networks}.
\bjournal{nature}
\bvolume{433}
\bpages{895--900}.
\end{barticle}
\endbibitem

\bibitem[\protect\citeauthoryear{Holland, Laskey and
  Leinhardt}{1983}]{original_paper_sbm}
\begin{barticle}[author]
\bauthor{\bsnm{Holland},~\bfnm{Paul~W}\binits{P.~W.}},
  \bauthor{\bsnm{Laskey},~\bfnm{Kathryn~Blackmond}\binits{K.~B.}} \AND
  \bauthor{\bsnm{Leinhardt},~\bfnm{Samuel}\binits{S.}}
(\byear{1983}).
\btitle{Stochastic blockmodels: First steps}.
\bjournal{Social networks}
\bvolume{5}
\bpages{109--137}.
\end{barticle}
\endbibitem

\bibitem[\protect\citeauthoryear{Junker and
  Schreiber}{2008}]{survey_biology_cd}
\begin{binproceedings}[author]
\bauthor{\bsnm{Junker},~\bfnm{Bj{\"o}rn~H.}\binits{B.~H.}} \AND
  \bauthor{\bsnm{Schreiber},~\bfnm{Falk}\binits{F.}}
(\byear{2008}).
\btitle{Analysis of Biological Networks}.
\end{binproceedings}
\endbibitem

\bibitem[\protect\citeauthoryear{Kernighan and
  Lin}{1970}]{kernighan-lin-algorithm}
\begin{barticle}[author]
\bauthor{\bsnm{Kernighan},~\bfnm{{B. W. }}\binits{B.}} \AND
  \bauthor{\bsnm{Lin},~\bfnm{S.}\binits{S.}}
(\byear{1970}).
\btitle{An Efficient Heuristic Procedure for Partitioning Graphs}.
\bjournal{AT{\&}T Technical Journal}
\bvolume{49}
\bpages{291--307}.
\bdoi{10.1002/j.1538-7305.1970.tb01770.x}
\end{barticle}
\endbibitem

\bibitem[\protect\citeauthoryear{Klopp, Tsybakov and
  Verzelen}{2017}]{graphons1}
\begin{barticle}[author]
\bauthor{\bsnm{Klopp},~\bfnm{Olga}\binits{O.}},
  \bauthor{\bsnm{Tsybakov},~\bfnm{Alexandre~B}\binits{A.~B.}} \AND
  \bauthor{\bsnm{Verzelen},~\bfnm{Nicolas}\binits{N.}}
(\byear{2017}).
\btitle{Oracle inequalities for network models and sparse graphon estimation}.
\bjournal{The Annals of Statistics}
\bvolume{45}
\bpages{316--354}.
\end{barticle}
\endbibitem

\bibitem[\protect\citeauthoryear{Lancichinetti, Fortunato and
  Radicchi}{2008}]{benchmark_sbm1}
\begin{barticle}[author]
\bauthor{\bsnm{Lancichinetti},~\bfnm{Andrea}\binits{A.}},
  \bauthor{\bsnm{Fortunato},~\bfnm{Santo}\binits{S.}} \AND
  \bauthor{\bsnm{Radicchi},~\bfnm{Filippo}\binits{F.}}
(\byear{2008}).
\btitle{Benchmark graphs for testing community detection algorithms}.
\bjournal{Physical review E}
\bvolume{78}
\bpages{046110}.
\end{barticle}
\endbibitem

\bibitem[\protect\citeauthoryear{Newman}{2004a}]{newman_greedy_modularity_optimization}
\begin{barticle}[author]
\bauthor{\bsnm{Newman},~\bfnm{Mark~EJ}\binits{M.~E.}}
(\byear{2004}a).
\btitle{Fast algorithm for detecting community structure in networks}.
\bjournal{Physical review E}
\bvolume{69}
\bpages{066133}.
\end{barticle}
\endbibitem

\bibitem[\protect\citeauthoryear{Newman}{2004b}]{modularity_weighted_graph}
\begin{barticle}[author]
\bauthor{\bsnm{Newman},~\bfnm{Mark~EJ}\binits{M.~E.}}
(\byear{2004}b).
\btitle{Analysis of weighted networks}.
\bjournal{Physical review E}
\bvolume{70}
\bpages{056131}.
\end{barticle}
\endbibitem

\bibitem[\protect\citeauthoryear{Newman}{2006}]{newman_spectral_modularity_optimization}
\begin{barticle}[author]
\bauthor{\bsnm{Newman},~\bfnm{Mark~EJ}\binits{M.~E.}}
(\byear{2006}).
\btitle{Finding community structure in networks using the eigenvectors of
  matrices}.
\bjournal{Physical review E}
\bvolume{74}
\bpages{036104}.
\end{barticle}
\endbibitem

\bibitem[\protect\citeauthoryear{Newman}{2010}]{newman_survey}
\begin{bbook}[author]
\bauthor{\bsnm{Newman},~\bfnm{Mark E.~J.}\binits{M.~E.~J.}}
(\byear{2010}).
\btitle{Networks: An Introduction}.
\bpublisher{Oxford University Press}.
\end{bbook}
\endbibitem

\bibitem[\protect\citeauthoryear{Newman and
  Girvan}{2004}]{girvan_newman_modularity}
\begin{barticle}[author]
\bauthor{\bsnm{Newman},~\bfnm{Mark~EJ}\binits{M.~E.}} \AND
  \bauthor{\bsnm{Girvan},~\bfnm{Michelle}\binits{M.}}
(\byear{2004}).
\btitle{Finding and evaluating community structure in networks}.
\bjournal{Physical review E}
\bvolume{69}
\bpages{026113}.
\end{barticle}
\endbibitem

\bibitem[\protect\citeauthoryear{Palla et~al.}{2005}]{clique_percolation}
\begin{barticle}[author]
\bauthor{\bsnm{Palla},~\bfnm{Gergely}\binits{G.}},
  \bauthor{\bsnm{Der{\'e}nyi},~\bfnm{Imre}\binits{I.}},
  \bauthor{\bsnm{Farkas},~\bfnm{Ill{\'e}s}\binits{I.}} \AND
  \bauthor{\bsnm{Vicsek},~\bfnm{Tam{\'a}s}\binits{T.}}
(\byear{2005}).
\btitle{Uncovering the overlapping community structure of complex networks in
  nature and society}.
\bjournal{nature}
\bvolume{435}
\bpages{814--818}.
\end{barticle}
\endbibitem

\bibitem[\protect\citeauthoryear{Pensky}{2019}]{graphons2}
\begin{barticle}[author]
\bauthor{\bsnm{Pensky},~\bfnm{Marianna}\binits{M.}}
(\byear{2019}).
\btitle{Dynamic network models and graphon estimation}.
\bjournal{The Annals of Statistics}
\bvolume{47}
\bpages{2378--2403}.
\end{barticle}
\endbibitem

\bibitem[\protect\citeauthoryear{Polzehl and Spokoiny}{2006}]{Polzehl}
\begin{barticle}[author]
\bauthor{\bsnm{Polzehl},~\bfnm{J\"{o}rg}\binits{J.}} \AND
  \bauthor{\bsnm{Spokoiny},~\bfnm{Vladimir}\binits{V.}}
(\byear{2006}).
\btitle{Propagation-separation approach for local likelihood estimation}.
\bjournal{Probab. Theory Related Fields}
\bvolume{135}
\bpages{335--362}.
\bdoi{10.1007/s00440-005-0464-1}
\bmrnumber{2240690}
\end{barticle}
\endbibitem

\bibitem[\protect\citeauthoryear{Pothen, Simon and
  Liou}{1990}]{pothen_some_spectral_graph_partitioning_algorithms}
\begin{barticle}[author]
\bauthor{\bsnm{Pothen},~\bfnm{Alex}\binits{A.}},
  \bauthor{\bsnm{Simon},~\bfnm{Horst~D}\binits{H.~D.}} \AND
  \bauthor{\bsnm{Liou},~\bfnm{Kang-Pu}\binits{K.-P.}}
(\byear{1990}).
\btitle{Partitioning sparse matrices with eigenvectors of graphs}.
\bjournal{SIAM journal on matrix analysis and applications}
\bvolume{11}
\bpages{430--452}.
\end{barticle}
\endbibitem

\bibitem[\protect\citeauthoryear{Rand}{1971}]{rand}
\begin{barticle}[author]
\bauthor{\bsnm{Rand},~\bfnm{William~M}\binits{W.~M.}}
(\byear{1971}).
\btitle{Objective criteria for the evaluation of clustering methods}.
\bjournal{Journal of the American Statistical association}
\bvolume{66}
\bpages{846--850}.
\end{barticle}
\endbibitem

\bibitem[\protect\citeauthoryear{Rosvall and
  Bergstrom}{2008}]{citation_example}
\begin{barticle}[author]
\bauthor{\bsnm{Rosvall},~\bfnm{Martin}\binits{M.}} \AND
  \bauthor{\bsnm{Bergstrom},~\bfnm{Carl~T.}\binits{C.~T.}}
(\byear{2008}).
\btitle{Maps of random walks on complex networks reveal community structure}.
\bjournal{Proceedings of the National Academy of Sciences}
\bvolume{105}
\bpages{1118-1123}.
\bdoi{10.1073/pnas.0706851105}
\end{barticle}
\endbibitem

\bibitem[\protect\citeauthoryear{Shen
  et~al.}{2009}]{modularity_overlapping_communities}
\begin{barticle}[author]
\bauthor{\bsnm{Shen},~\bfnm{Huawei}\binits{H.}},
  \bauthor{\bsnm{Cheng},~\bfnm{Xueqi}\binits{X.}},
  \bauthor{\bsnm{Cai},~\bfnm{Kai}\binits{K.}} \AND
  \bauthor{\bsnm{Hu},~\bfnm{Mao-Bin}\binits{M.-B.}}
(\byear{2009}).
\btitle{Detect overlapping and hierarchical community structure in networks}.
\bjournal{Physica A: Statistical Mechanics and its Applications}
\bvolume{388}
\bpages{1706--1712}.
\end{barticle}
\endbibitem

\bibitem[\protect\citeauthoryear{Su et~al.}{2022}]{survey_sbm_deeplearning}
\begin{barticle}[author]
\bauthor{\bsnm{Su},~\bfnm{Xing}\binits{X.}},
  \bauthor{\bsnm{Xue},~\bfnm{Shan}\binits{S.}},
  \bauthor{\bsnm{Liu},~\bfnm{Fanzhen}\binits{F.}},
  \bauthor{\bsnm{Wu},~\bfnm{Jia}\binits{J.}},
  \bauthor{\bsnm{Yang},~\bfnm{Jian}\binits{J.}},
  \bauthor{\bsnm{Zhou},~\bfnm{Chuan}\binits{C.}},
  \bauthor{\bsnm{Hu},~\bfnm{Wenbin}\binits{W.}},
  \bauthor{\bsnm{Paris},~\bfnm{Cecile}\binits{C.}},
  \bauthor{\bsnm{Nepal},~\bfnm{Surya}\binits{S.}},
  \bauthor{\bsnm{Jin},~\bfnm{Di}\binits{D.}} \betal{et~al.}
(\byear{2022}).
\btitle{A comprehensive survey on community detection with deep learning}.
\bjournal{IEEE Transactions on Neural Networks and Learning Systems}.
\end{barticle}
\endbibitem

\bibitem[\protect\citeauthoryear{Traud, Mucha and
  Porter}{2012}]{facebook_example1}
\begin{barticle}[author]
\bauthor{\bsnm{Traud},~\bfnm{Amanda~L.}\binits{A.~L.}},
  \bauthor{\bsnm{Mucha},~\bfnm{Peter~J.}\binits{P.~J.}} \AND
  \bauthor{\bsnm{Porter},~\bfnm{Mason~A.}\binits{M.~A.}}
(\byear{2012}).
\btitle{Social structure of Facebook networks}.
\bjournal{Physica A: Statistical Mechanics and its Applications}
\bvolume{391}
\bpages{4165-4180}.
\bdoi{https://doi.org/10.1016/j.physa.2011.12.021}
\end{barticle}
\endbibitem

\bibitem[\protect\citeauthoryear{Vershynin}{2018}]{bernstein_ineq}
\begin{bbook}[author]
\bauthor{\bsnm{Vershynin},~\bfnm{Roman}\binits{R.}}
(\byear{2018}).
\btitle{High-dimensional probability}.
\bseries{Cambridge Series in Statistical and Probabilistic Mathematics}
\bvolume{47}.
\bpublisher{Cambridge University Press, Cambridge}
\bnote{An introduction with applications in data science, With a foreword by
  Sara van de Geer}.
\bdoi{10.1017/9781108231596}
\bmrnumber{3837109}
\end{bbook}
\endbibitem

\bibitem[\protect\citeauthoryear{Wickramasinghe and
  Muthukumarana}{2021}]{CD_example_COVID}
\begin{barticle}[author]
\bauthor{\bsnm{Wickramasinghe},~\bfnm{Ashani}\binits{A.}} \AND
  \bauthor{\bsnm{Muthukumarana},~\bfnm{Saman}\binits{S.}}
(\byear{2021}).
\btitle{Social network analysis and community detection on spread of COVID-19}.
\bjournal{Model Assisted Statistics and Applications}
\bvolume{16}
\bpages{37-52}.
\bdoi{10.3233/MAS-210513}
\end{barticle}
\endbibitem

\bibitem[\protect\citeauthoryear{Zachary}{1977}]{karate_club}
\begin{barticle}[author]
\bauthor{\bsnm{Zachary},~\bfnm{Wayne~W.}\binits{W.~W.}}
(\byear{1977}).
\btitle{An Information Flow Model for Conflict and Fission in Small Groups}.
\bjournal{Journal of Anthropological Research}
\bvolume{33}
\bpages{452--473}.
\end{barticle}
\endbibitem

\bibitem[\protect\citeauthoryear{Zhang and Zhou}{2016}]{sbm_general_minimax}
\begin{barticle}[author]
\bauthor{\bsnm{Zhang},~\bfnm{Anderson~Y}\binits{A.~Y.}} \AND
  \bauthor{\bsnm{Zhou},~\bfnm{Harrison~H}\binits{H.~H.}}
(\byear{2016}).
\btitle{Minimax rates of community detection in stochastic block models}.
\bjournal{The Annals of Statistics}
\bvolume{44}
\bpages{2252--2280}.
\end{barticle}
\endbibitem

\end{thebibliography}

\end{document}